\newtheorem{theorem}{Theorem}[section]
\newtheorem{lemma}[theorem]{Lemma}
\theoremstyle{definition}
\newtheorem{definition}[theorem]{Definition}
\newtheorem{question}[theorem]{Question}
\newtheorem{example}[theorem]{Example}
\newtheorem{corollary}[theorem]{Corollary}
\newtheorem{remark}[theorem]{Remark}
\theoremstyle{remark}
\newcommand{\be}{\begin{equation}}
\newcommand{\ee}{\end{equation}}
\newcommand{\hooklongrightarrow}{\lhook\joinrel\longrightarrow}
\numberwithin{equation}{section}
\begin{document}

\title{Characterization on projective submanifolds of codimensions 2 and 3}

\author{Ping Li}

\address{School of Mathematical Sciences, Tongji University, Shanghai 200092, China}

\email{pingli@tongji.edu.cn\\
pinglimath@gmail.com}
\author{Fangyang Zheng}
\address{School of Mathematical Sciences, Chongqing Normal University, Chongqing
401331, China}
\email{franciszheng@yahoo.com}
\thanks{The first author was partially supported by the National
Natural Science Foundation of China (Grant No. 11722109).}

 \subjclass[2010]{57R20, 32Q55, 53C55, 57R22.}


\keywords{Chern class, Chern class inequality, very ample line bundle, codimension, second fundamental form,  Gauss map, tangent variety, secant variety.}

\begin{abstract}
In this article we give a necessary and sufficient condition to characterize projective submanifolds in ${\mathbb P}^N$ with codimensions $2$ and $3$. The conditions involve the Chern classes of the manifold and a very ample line bundle on the manifold. This generalizes our earlier characterization for hypersurfaces. The higher codimensional cases are proposed as a general question.
\end{abstract}

\maketitle

\tableofcontents

\section{Introduction and statements of results}\label{introduction}
In our recent article \cite{LZ}, we obtained the following characterization for hypersurfaces in complex projective space (\cite[Thm 2.2]{LZ}).
\begin{theorem}\label{thm1.1}
A complex projective manifold $X^n$ with dimension $n\geq 2$ can be realized as a hypersurface in ${\mathbb P}^{n+1}$ if and only if $X$ admits a very ample line bundle $L$ such that
\be\nonumber
\sigma_2(X,L):= \frac{1}{2}(n+2)(n+1)L^2 - (n+2)Lc_1+c_1^2-c_2=0
\ee
as a cohomology class, where $c_i$ is the $i$-th Chern class of $X$.
\end{theorem}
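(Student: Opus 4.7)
Assume $X$ is embedded in $\mathbb{P}^{n+1}$ as a smooth hypersurface of degree $d$ with $L$ the restriction of $\mathcal{O}_{\mathbb{P}^{n+1}}(1)$. The normal bundle is $\mathcal{O}_X(d)$, so the normal sequence yields $c(TX)\cdot(1+dL) = (1+L)^{n+2}$. Extracting the first two terms gives $c_1 = (n+2-d)L$ and $c_2 = \bigl[\binom{n+2}{2} - (n+2)d + d^2\bigr] L^2$; substituting into the definition of $\sigma_2(X,L)$ and simplifying shows that the expression vanishes identically in $d$.

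\textbf{If direction, setup.} Assume $L$ is very ample with $\sigma_2(X,L)=0$. Use $|L|$ to embed $X \hookrightarrow \mathbb{P}^N$ where $N = h^0(X,L)-1$, and let $r = N-n$ denote the codimension. The plan is to force $r \le 1$: if $r=0$ then $X \cong \mathbb{P}^n$ (a smooth $n$-dimensional subvariety of $\mathbb{P}^n$ is $\mathbb{P}^n$), which is realizable as a hyperplane in $\mathbb{P}^{n+1}$; if $r=1$ then $X$ is a hypersurface in $\mathbb{P}^{n+1}$ directly. From $0 \to TX \to T\mathbb{P}^N|_X \to \mathcal{N} \to 0$ and $c(T\mathbb{P}^N|_X) = (1+L)^{N+1}$, a Whitney-sum computation yields
\[
c_2(\mathcal{N}) \;=\; \binom{N+1}{2} L^2 - (N+1) L c_1 + c_1^2 - c_2.
\]
Comparing with $\sigma_2(X,L)$ and invoking $\sigma_2(X,L) = 0$ gives the key identity
\[
c_2(\mathcal{N}) \;=\; (r-1)\left[\frac{2n+r+2}{2}\, L^2 - L c_1\right],
\]
which is trivially $0$ when $r=1$ (since $\mathcal{N}$ is then a line bundle).

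\textbf{Main obstacle.} The hardest step is ruling out $r \ge 2$. Here $\mathcal{N}$ is an ample globally generated quotient of $T\mathbb{P}^N|_X$, and one would play the rigid formula above -- in which $c_2(\mathcal{N})$ is entirely determined by $L$ and $c_1(X)$, up to the nonzero factor $r-1$ -- against positivity constraints on Chern and Segre classes of ample vector bundles, such as Fulton--Lazarsfeld Schur polynomial positivity or Bloch--Gieseker-type inequalities, then pair them with intersection numbers like $c_2(\mathcal{N}) \cdot L^{n-2}$ to extract a numerical contradiction. An alternative, more geometric route, suggested by the paper's keywords, is to control the Gauss map $\gamma\colon X \to \mathrm{Gr}(n,N)$ together with the tangent and secant varieties $\mathrm{Tan}(X),\,\mathrm{Sec}(X) \subset \mathbb{P}^N$, using Zak- and Fulton--Hansen-type connectedness and tangency theorems to project $X$ from points off $\mathrm{Sec}(X)$, iteratively lowering $N$ while preserving $L$ and the identity $\sigma_2(X,L)=0$, until $r \le 1$ is reached. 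Making either of these strategies produce the required contradiction uniformly for every $r \ge 2$ is the technically most demanding part of the argument.
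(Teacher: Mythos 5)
The ``only if'' direction is correct. For the ``if'' direction, your Whitney-sum identity $c_2(\mathcal{N}) = (r-1)\left[\frac{2n+r+2}{2}L^2 - Lc_1\right]$ is computed correctly, but it cannot deliver the required contradiction: the bracket equals $\sigma_1(X,L) + \frac{r}{2}L$, which is ample, so for $r \geq 2$ one simply gets $c_2(\mathcal{N}) > 0$ --- which is exactly what is expected of the ample, globally generated bundle $\mathcal{N}$, and Fulton--Lazarsfeld or Bloch--Gieseker positivity furnishes no contradiction. (The reason your formula gives $(r-1)[\cdots]$ rather than $0$ is that $\sigma_2(X,L)=c_2(\gamma^{\ast}Q)$ with $\gamma^{\ast}Q \cong N_X\otimes L^{-1}$, the normal bundle \emph{twisted} by $L^{-1}$, not $N_X$ itself.) You correctly identify the obstacle, but neither strategy you outline is carried out, so the proof has a genuine gap; the first route appears to be a dead end.

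The idea that closes this gap --- used in \cite{LZ} and recalled in Sections~\ref{reduction}--\ref{proof 1} of the present paper --- is to upgrade the vanishing of the cohomology class $\sigma_2$ to a \emph{pointwise} statement. Because $\gamma^{\ast}Q$ is globally generated, its second Chern form (for the induced metric) is a nonnegative $(2,2)$-form in the strong sense, so $\sigma_2(X,L)=0$ forces $C_2(\gamma^{\ast}Q)\equiv 0$ everywhere. This translates (Lemma~\ref{reduce algebraic lemma}) into the width-$2$ condition $H^{\alpha}_u\wedge H^{\beta}_u=0$ on the second fundamental form matrices, and the elementary Lemma~\ref{codim1} then shows all $H^{\alpha}$ are proportional, i.e.\ the second fundamental form has rank at most $1$ at each point. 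Since $1<n$, Corollary~\ref{codimbound} (Fulton--Hansen via Lemma~\ref{tangentsecantlemma}, together with Lemma~\ref{projection}) gives $r_L\le 1$. Note also that the correct target is $r_L=\dim\mathrm{Sec}(X)-n$, not the codimension $N-n$ of the fixed Kodaira embedding you work in: one bounds the dimension of the secant variety and then \emph{projects} to obtain the embedding in $\mathbb{P}^{n+1}$, rather than trying to control the ambient dimension of a given embedding by purely numerical Chern class inequalities.
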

The purpose of this article is to generalize the above result and give characterization to projective submanifolds of codimensions two  and three.

Before stating the results, let us fix some notations. Let $X^n$ be a projective manifold of dimension $n$ and $L$ a very ample line bundle on $X$. Let $i: X\hookrightarrow {\mathbb P}^N $ be a holomorphic embedding. We will always assume that it is {\em non-degenerate}, namely, $i(X)$ is not contained in any hyperplanes of ${\mathbb P}^N $. The embedding $i$ is said to be {\em associated to} $L$, if $i^{\ast} \big( {\mathcal O}_{{\mathbb P}^N}(1)\big) = L$.

If we take a basis $\{ s_0, \ldots , s_{N_0}\}$ of $H^0(X,L)\cong {\mathbb C}^{N_0+1}$, the space of all global holomorphic sections of $L$ on $X$, then we get a non-degenerate holomorphic embedding $i_0: X\hookrightarrow {\mathbb P}^{N_0} $ via
\be\nonumber\begin{split}
X&\overset{i_0}{\hooklongrightarrow}
\mathbb{P}\big(H^0(X,L)^{\ast}\big)\cong\mathbb{P}^{N_0},\\
x&\longmapsto[s_0(x):s_1(x):\cdots:s_{N_0}(x)].
\end{split}
\ee
This $i_0$ will be called the {\em Kodaira map} of $L$.

Clearly $N_0\geq N$, and any non-degenerate embedding $i$ associated to $L$ is given by $i=\pi \circ i_0$ where $\pi : {\mathbb P}^{N_0} \setminus P_1 \rightarrow P_2 \cong {\mathbb P}^N$ is the projection determined by a linear subspace $P_1 \cong {\mathbb P}^{N_0-N-1}$ ($\mathbb{P}^{-1}:=\{pt\}$) in ${\mathbb P}^{N_0}$ which does not intersect $i_0(X)$. More precisely,  $P_1$ does not intersect $\text{Sec}\big(i_0(X)\big)$, the secant variety of $i_0(X)$, and more details can be found in the proof of Lemma \ref{projection}.

Denote by
$$r_L:=\min\big\{N-n~|~\text{$X\hookrightarrow {\mathbb P}^N$ associated to $L$}\big\},\qquad r_X:=\min\big\{r_L~|~\text{very ample $L$}\big\},$$
and call them the {\em codimensions} of $(X,L)$ and $X$, respectively.

Once we have  an embedding $i$ associated to $L$, it will induce a \emph{Gauss map} $\gamma$ sending $x\in X$ to the $n$-dimensional \emph{projective} tangent space $\widetilde{T}_xX$ in $\mathbb{P}^N$, which is defined as the limiting position of all chords $\overline{xy}$ as $y\rightarrow x$, and can be uniquely identified with an $(n+1)$-dimensional linear subspace in $\mathbb{C}^{N+1}$:
\be\label{Gauss map}
\begin{split}
X&\overset{\gamma}{\longrightarrow}
\mathbb{G}_{n}(\mathbb{P}^N)\cong G_{n+1}(\mathbb{C}^{N+1})\\
x&\longmapsto\widetilde{T}_xM,
\end{split}\ee
where $\mathbb{G}_{n}(\mathbb{P}^N)$ is the Grassmannian variety of $n$-dimensional projective subspaces in $\mathbb{P}^N$, which is isomorphic to $G_{n+1}(\mathbb{C}^{N+1})$, the usual complex Grassmannian of $(n+1)$-dimensional linear spaces in $\mathbb{C}^{N+1}$. Denote by $Q$ the universal quotient bundle of $G_{n+1}(\mathbb{C}^{N+1})$. It turns out that the Chern class of $\gamma^{\ast}Q$, the pull back of $Q$ by $\gamma$, depends only on the pair $(X,L)$ and is independent on the choice of the embedding $i$ associated to $L$. Indeed, as a straightforward computation, we get in \cite[\S 5]{LZ} that the $k$-th Chern class of $\gamma^{\ast }Q$ is given by
\be
\sigma_k(X,L):= c_k(\gamma^{\ast }Q) = \sum_{i=0}^k\binom{n+k}{k-i}\cdot L^{k-i}\cdot s_i(X),\nonumber
\ee
where $s_i(X)$ is the $i$-th Segre class of the holomorphic tangent bundle $TX$, defined as the $i$-th component of the  formal inverse $s(X)$ of the total Chern class $c(X)$. Namely, $s(E)c(E)=1$ for any vector bundle $E$. In particular,
\begin{eqnarray}\nonumber
\sigma_1(X,L)&=&(n+1)L-c_1;\\
\nonumber
\sigma_2(X,L)&=&\frac{1}{2}(n+2)(n+1)L^2 - (n+2)Lc_1+c_1^2-c_2;\\
\nonumber
\sigma_3(X,L)& = & \binom{n+3}{3} L^3 - \binom{n+3}{2} L^2c_1 + (n+3) L(c^2_1-c_2) - (c_1^3-2c_1c_2+c_3); \\
\nonumber
\sigma_4(X,L) & = & \binom{n+4}{4} L^4 - \binom{n+4}{3} L^3c_1 + \binom{n+4}{2} L^2 (c^2_1-c_2) + \\
\nonumber
& & -\ (n+4) \,L(c_1^3-2c_1c_2+c_3) + (c_1^4 -2c_1^2c_2+ c_1c_3-c_2^2+c_4).
\end{eqnarray}

Since $Q$ is globally generated and so is $\gamma^{\ast }Q$, it is well-known that $\sigma_k(X,L) \geq 0$
as a cohomology class. Namely, $\int_Y \sigma_k(X,L)\geq 0$ for any $k$-dimensional irreducible subvariety $Y\subset X$. Moreover, it turns out that the $k$-th Chern form of $\gamma^{\ast}(Q)$ with respect to the canonical connection is a nonnegative $(k,k)$-form in the strong sense (\cite[Prop. 3.1]{Li20}).

In the mean time, the rank of $\gamma^{\ast}(Q)$ is $N-n$, the codimension of $X^n$ in ${\mathbb P}^N$. So we know that $\sigma_k(X,L)=0$ whenever $k>\min\{n, N-n\}$. In particular, for any integer $n\geq k\geq 2$, if $r_L<k$, then $\sigma_k(X,L)=0$. It is natural to ask if the converse is true:
\begin{question}\label{question1}
Let $k\geq 2$ be an integer, and $L$ a very ample line bundle over a projective manifold $X^n$ with $n\geq k$ such that $\sigma_{k}(X,L)=0$. Is it true that $r_L<k$?
\end{question}

A possibly weaker question would be, under the same assumption, is it true that $r_X<k$? In \cite[Thm 2.2]{LZ} ($=$ Theorem \ref{thm1.1}), we have already seen the affirmative answer to Question \ref{question1} for $k=2$. Namely, the equality $\sigma_2(X,L)=0$ implies that $r_L<2$ so $X$ can be embedded in ${\mathbb P}^{n+1}$.

The first main result in this article is that the same is true for $k=3$, namely we have
\begin{theorem} \label{thm1.3}
If $X^n$ is a projective manifold with $n\geq 3$ and $L$ a very ample line bundle on it with $\sigma_3(X,L)=0$, then $r_L<3$. Consequently, a projective manifold $X^n$ with $n\geq3$ can be embedded in ${\mathbb P}^{n+2}$ if and only if it admits a very ample line bundle $L$ such that $\sigma_3(X,L)=0$.
\end{theorem}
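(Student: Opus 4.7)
The plan is to follow the same general strategy as for Theorem~\ref{thm1.1}: use the Kodaira embedding and project from a suitable linear subspace. Begin with the Kodaira map $i_0 \colon X \hookrightarrow \mathbb{P}^{N_0}$ associated to $L$. By the discussion preceding Lemma~\ref{projection}, producing an embedding $X \hookrightarrow \mathbb{P}^{n+2}$ associated to $L$ amounts to finding a linear subspace $P_1 \cong \mathbb{P}^{N_0-n-3}$ in $\mathbb{P}^{N_0}$ disjoint from the secant variety $\mathrm{Sec}(i_0(X))$. A generic $P_1$ of this dimension exists as soon as $\dim \mathrm{Sec}(i_0(X)) \leq n+2$, so the theorem reduces to establishing this secant-dimension bound.

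The first and most substantive step is to extract from $\sigma_3(X,L) = 0$ a geometric bound on the Gauss map. The key input is the fact from \cite[Prop.~3.1]{Li20} (quoted in the introduction) that $c_3(\gamma^{\ast}Q)$ with its canonical connection is a strongly non-negative $(3,3)$-form on $X$. Combined with the cohomological vanishing, this forces $c_3(\gamma^{\ast}Q)$ to vanish identically as a form: indeed, for any Kähler form $\omega$ on $X$, $c_3(\gamma^{\ast}Q) \wedge \omega^{n-3}$ is a non-negative top-degree form whose integral vanishes, so it vanishes pointwise, and strong non-negativity then propagates back to $c_3(\gamma^{\ast}Q)$ itself. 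Since the curvature of $\gamma^{\ast}Q$ is the pullback of the curvature of $Q$ under $\gamma$, pointwise vanishing of the Chern polynomial $c_3$ in that curvature translates into a rank bound $\mathrm{rk}(d\gamma) \leq 2$ at every point, whence $\dim \gamma(X) \leq 2$.

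Once this Gauss-dimension bound is in hand, the rest is standard projective geometry. The tangent variety $\mathrm{Tan}(i_0(X))$ is the image of the $\mathbb{P}^n$-bundle of projective tangent spaces over $X$, and one has the classical identity $\dim \mathrm{Tan}(i_0(X)) = n + \dim \gamma(X) \leq n+2$. Zak's theorem on tangencies then asserts that for a non-degenerate smooth $X \subset \mathbb{P}^{N_0}$, either $\dim \mathrm{Sec}(i_0(X)) = 2n+1$ (with $\dim \mathrm{Tan}(i_0(X)) = 2n$), or else $\mathrm{Sec}(i_0(X)) = \mathrm{Tan}(i_0(X))$. Because $n \geq 3$ gives $n+2 < 2n$, the first alternative is incompatible with our tangent-variety bound, so the second holds and $\dim \mathrm{Sec}(i_0(X)) \leq n+2$, as required.

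The main obstacle lies in the middle step: the passage from the identity $\sigma_3(X,L) = 0$ in cohomology to the pointwise rank bound on $d\gamma$. Strong non-negativity of the Chern form makes the cohomological-to-pointwise vanishing easy; what is more delicate is translating the pointwise vanishing of $c_3$ of a Griffiths semi-positive bundle into a rank-$2$ condition on the classifying map. This requires a careful linear-algebraic analysis of the curvature operator of the universal quotient $Q$ on $G_{n+1}(\mathbb{C}^{N_0+1})$ and of how this operator behaves under pullback by $\gamma$. An auxiliary tool that may help in this analysis is the adjunction identity $\sigma_k(Y, L|_Y) = \sigma_k(X,L)|_Y$ for a smooth $Y \in |L|$, which follows from a short Segre-class computation and permits descent to lower-dimensional test cases where the Chern-curvature bookkeeping simplifies.
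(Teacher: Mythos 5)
Your overall strategy coincides with the paper's: reduce to showing that the Gauss map $\gamma$ (equivalently, the second fundamental form) has generic rank at most $2$, then bound $\dim\mathrm{Tan}$, and finally invoke the Fulton--Hansen dichotomy to conclude $\dim\mathrm{Sec}\leq n+2$. The passage from cohomological vanishing of $\sigma_3$ to pointwise vanishing of the Chern form via strong non-negativity is also exactly what the paper does.

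However, the heart of the theorem is precisely the step you flag as ``delicate'' and then leave unaddressed: deducing $\mathrm{rk}(d\gamma)\leq 2$ from the pointwise vanishing of $c_3(\gamma^{\ast}Q)$. This is \emph{not} a soft linear-algebra observation about Griffiths-positive curvature operators; it is the substantive content of the paper. Concretely, Lemma~\ref{reduce algebraic lemma} shows that pointwise vanishing of the Chern form is equivalent to a \emph{width-$3$ condition} on the second fundamental form matrices $H^{\alpha}$, namely $H^{\alpha_1}_u\wedge H^{\alpha_2}_u\wedge H^{\alpha_3}_u=0$ for all $u$ and all triples of indices. This condition by itself imposes no rank bound (it is, for instance, satisfied trivially if two of the matrices are proportional regardless of rank). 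One also needs the non-degeneracy condition from Lemma~\ref{generic intersection} (which uses ampleness of $(n+1)L-c_1$, a nontrivial input). Only then does Lemma~\ref{codim2} / Corollary~\ref{cor} — whose proof occupies Appendix~\ref{proof of codim2} and involves a minimality-of-$n$ induction with generic hyperplane restriction and extensive case analysis — yield that any $(n,3)$-system has rank at most $2$. Without supplying that algebraic lemma, your proof has a genuine gap at its central step. A smaller point: the identity $\dim\mathrm{Tan}(X)=n+\dim\gamma(X)$ you cite is better replaced by the inequality $\dim\mathrm{Tan}(X)\leq\dim\widetilde{T}^{(2)}_xX=n+\mathrm{rank}(II)$, obtained via the second osculating space as in Lemma~\ref{tangentsecantlemma}; the equality can fail for tangentially degenerate varieties, and the inequality is all that is needed.
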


This gives a necessary and sufficient condition characterizing submanifolds in projective space of codimension less than or equal to two.

As we shall see in the proofs (see Corollary \ref{codimbound}), $r_L$ is always no larger than the rank of the second fundamental form of $X\subset {\mathbb P}^N$, which we denote by $l$, when $l$ is less than the dimension of $X$. The reason that we have the above result for $k=2$ and $k=3$ cases is simply because $l<k$ under the assumption $\sigma_k(X,L)=0$. When $k\geq 4$, $l$ is no longer always less than $k$. For instance, the Segre fourfold $X={\mathbb P}^2\times {\mathbb P}^2 \subset {\mathbb P}^8$ satisfies $\sigma_4(X,L)=0$ but $l=4$. In this case, we still have $r_L=3<4$, since the secant variety of $X$ is $7$-dimensional. So when $k\geq 4$ one needs to dig deeper into the structure of the manifolds. As a trial case analysis, we obtain the following affirmative answer to the $k=4$ case of Question \ref{question1} when $n\geq 5$:

\begin{theorem} \label{thm1.4}
 If $X^n$ is a projective manifold of dimension $n\geq 5$ and $L$ a very ample line bundle on $X$ with $\sigma_4(X,L)=0$, then $r_L<4$. Consequently, a projective manifold $X^n$ with $n\geq 5$ can be embedded in ${\mathbb P}^{n+3}$ if and only if it admits a very ample line bundle $L$ with $\sigma_4(X,L)=0$.
\end{theorem}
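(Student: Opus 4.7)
We argue by contradiction: assume $r_L \geq 4$, fix a minimal non-degenerate embedding $i\colon X \hookrightarrow \mathbb{P}^N$ associated to $L$ (so $N = n + r_L$), and form the Gauss map $\gamma\colon X \to G_{n+1}(\mathbb{C}^{N+1})$. Then $\gamma^*Q$ is globally generated of rank $r_L$ with $c_4(\gamma^*Q) = \sigma_4(X,L) = 0$. The aim is to produce a linear subspace $P \subset \mathbb{P}^N$ of dimension $r_L - 4$ disjoint from $\mathrm{Sec}(X)$; projection from $P$ will then embed $X$ into $\mathbb{P}^{n+3}$ associated to $L$, contradicting $r_L \geq 4$.

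Let $l$ denote the generic rank of the second fundamental form of $X \subset \mathbb{P}^N$ (equivalently, the rank of $d\gamma$ at a general point). If $l \leq 3$ the codimension bound $r_L \leq l$ (Corollary~\ref{codimbound}, applicable since $n \geq 5 > l$) already contradicts $r_L \geq 4$. The essential new case is $l = 4$, as already realised by the Segre fourfold $\mathbb{P}^2 \times \mathbb{P}^2 \subset \mathbb{P}^8$. Under $l = 4 < n$, the Griffiths--Harris theorem guarantees that the fibres of $\gamma$ are $(n-4)$-dimensional linear subspaces of $\mathbb{P}^N$ contained in $X$, so $Y := \gamma(X)$ is a $4$-dimensional subvariety of the Grassmannian and $\gamma$ factors as $X \to Y \hookrightarrow G$. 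Hence $\sigma_4(X,L) = \gamma^* c_4(Q|_Y)$. Pairing with the class of a generic Gauss fibre $F$ --- a linear $\mathbb{P}^{n-4}$ on which $L$ restricts to an ample line bundle, so $[F] \neq 0$ in $H_{2(n-4)}(X,\mathbb{Q})$ --- promotes $\sigma_4 = 0$ to $c_4(Q|_Y) = 0$ in $H^8(Y,\mathbb{Q})$. But $Q|_Y$ is globally generated of rank $r_L$ on the $4$-dimensional $Y$ and $c_4$ is its top Chern class, so $r_L - 3$ generic sections of $Q|_Y$ are everywhere linearly independent. Via $H^0(G,Q) \cong \mathbb{C}^{N+1}$ these sections span a linear subspace $P \cong \mathbb{P}^{r_L - 4} \subset \mathbb{P}^N$ disjoint from the tangent variety $\mathrm{Tan}(X) = \bigcup_{x\in X} \widetilde{T}_x X$.

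Disjointness $P \cap \mathrm{Tan}(X) = \emptyset$ forces $\dim \mathrm{Tan}(X) \leq n + 3$, which is strictly less than $2n$ since $n \geq 5$. The Fulton--Hansen/Zak dichotomy for smooth non-degenerate subvarieties --- either $\mathrm{Sec}(X) = \mathrm{Tan}(X)$, or $\dim \mathrm{Sec}(X) = 2n + 1$ with $\dim \mathrm{Tan}(X) = 2n$ --- rules out the second alternative, so $\mathrm{Sec}(X) = \mathrm{Tan}(X)$, and therefore $P$ is disjoint from $\mathrm{Sec}(X)$ as well. Projecting from $P$ yields the desired embedding $X \hookrightarrow \mathbb{P}^{n+3}$, contradicting the minimality of $r_L \geq 4$.

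The main obstacle I anticipate is the remaining case $l \geq 5$, in which $Y$ has dimension $\geq 5$ and the pairing trick fails because $c_4(Q|_Y)$ is no longer top-dimensional on $Y$. Excluding $l \geq 5$ presumably requires a supplementary positivity or Schubert-calculus step --- analogous to how $\sigma_3(X,L) = 0$ forces $l \leq 2$ in the proof of Theorem~\ref{thm1.3} --- or a dimension-reduction argument descending to a subvariety of $X$ on which the $l = 4$ analysis can be re-executed. A secondary technical point, the potential singularity of $Y$ in the $l = 4$ case, can be handled by passing to a resolution $\widetilde{Y} \to Y$ and working with rational coefficients throughout.
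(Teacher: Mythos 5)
There is a genuine and fatal gap. You define $l$ as ``the generic rank of the second fundamental form (equivalently, the rank of $d\gamma$ at a general point),'' but these two quantities are not the same. The quantity entering Corollary~\ref{codimbound} is $l=\text{rank}\{H^{n+1},\dots,H^N\}$, the dimension of the linear span of the second fundamental form matrices, whereas the rank of $d\gamma_x$ equals $n-\dim\bigcap_{\alpha}\ker(H^{\alpha})$, which by Lemma~\ref{generic intersection} is $n$ at a generic point $x$; indeed Zak's theorem guarantees the Gauss map of a smooth non-linear projective variety is finite. For the Segre fourfold $\mathbb{P}^2\times\mathbb{P}^2\subset\mathbb{P}^8$ one has $l=4$, yet the Gauss map is an embedding. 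Hence the claim that the fibres of $\gamma$ are $(n-4)$-dimensional linear spaces and that $Y=\gamma(X)$ is $4$-dimensional is false: $Y$ has dimension $n$ under the paper's hypotheses. This collapses the key step of your proposal --- upgrading $\sigma_4(X,L)=0$ to the vanishing of the top Chern class $c_4(Q|_Y)$ of a globally generated bundle on a $4$-fold and then peeling off $r_L-3$ everywhere independent sections --- since there is no $4$-dimensional auxiliary variety to work on; on the $n$-dimensional $X$ itself, $c_4(\gamma^{\ast}Q)=0$ is not a top Chern class condition and does not by itself yield a trivial subbundle of the required corank. (You also leave $l\geq 5$ as an admitted gap; that part is at least closable via Lemma~\ref{codim3}(1), which bounds the rank of any $(n,4)$-system by $4$, but you did not supply it.)

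For contrast, the paper's proof does not work with the image of the Gauss map at all. After reducing to an embedding $X\subset\mathbb{P}^{n+4}$ (Theorem~\ref{codim4}) and to the case $l=4$, it invokes the structure half of Lemma~\ref{codim3}: for $n\geq 5$ the normal space carries a distinguished $3$-dimensional subspace $N'_x$ whose forms $H^w$ share an $(n-2)$-dimensional common kernel $F_x$. These kernels assemble into a holomorphic foliation of a dense open set (Lemma~\ref{foliation}); along each leaf the second osculating space is essentially constant (Lemma~\ref{constantT2}), which together with parallel-transport considerations pins the leaf (or a hypersurface in it) into a fixed linear $Q\cong\mathbb{P}^{n+1}$, and the relative Fulton--Hansen theorem (Lemma~\ref{relative}) then forces all of $X$ into $Q$, contradicting non-degeneracy. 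The mechanism you propose --- global sections of the pulled-back quotient bundle cutting out a center of projection --- hinges on the nonexistent drop in dimension of the Gauss image and cannot be repaired along those lines.
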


We believe that Theorem \ref{thm1.4} should still hold when $n=4$, but our proof only covers the $n\geq 5$ case.

\subsection*{Organization of this article}
The rest of this article is organized as follows. In Section \ref{reduction} we reduce the restriction $\sigma_k(X,L)=0$ to an algebraic condition (Lemma \ref{reduce algebraic lemma}) as well as relate it to the second fundamental form of $X$ in
$\mathbb{P}^N$ ( Lemma \ref{second fundamental form lemma}). Then in Section \ref{algebraic question} a related algebraic question is proposed, and the solutions in the cases of $k=3$ and $k=4$, Lemmas \ref{codim2} and \ref{codim3}, are the main technical results in this article. By applying them the proofs of Theorems \ref{thm1.3} and \ref{thm1.4} are presented respectively in Sections \ref{proof 1} and \ref{proof 2}. The reasoning process of Lemmas \ref{codim2} and \ref{codim3} is routine but quite tedious, so for the conciseness of this article we put them to the last three appendices.

\section{Algebraic reduction}\label{reduction}
The main purpose of this section is to reduce the condition $\sigma_k(X,L)=0$ to an algebraic result, and relate this algebraic result to the second fundamental form of $X$ in $\mathbb{P}^N$.

First note that $\gamma^{\ast}(Q)$ is a quotient of the trivial bundle $\underline{\mathbb{C}}^{N+1}$, and we endow it with the induced metric from the trivial one on $\underline{\mathbb{C}}^{N+1}$. Then the $k$-th canonical Chern \emph{form} of this Hermitian vector bundle $\gamma^{\ast}(Q)$, denoted by $C_k(\gamma^{\ast}(Q))$, is a nonnegative $(k,k)$-form in the strong sense (cf. \cite[Prop. 3.1]{Li20}). So the condition $\sigma_k(X,L)=0$ is equivalent to $C_k(\gamma^{\ast}(Q))=0$ \emph{pointwisely} as a form, from which we shall derive the restrictions.
To this end, we shall calculate the curvature matrix of $\gamma^{\ast}(Q)$ under some local frame and \emph{explicitly} write down $C_k(\gamma^{\ast}(Q))$.

As before, let $L$ be a very ample line bundle over $X^n$ and $X\subset {\mathbb P}^N$ be a non-degenerate embedding associated to $L$. Let $[Z_0:\cdots :Z_N]$ be a homogeneous coordinate on $\mathbb{P}^N$. Denote by $\gamma : X \rightarrow G_{n+1}(\mathbb{C}^{N+1})$ the Gauss map, and by $S$ and $Q$ the universal subbundle and quotient bundle of $G_{n+1}(\mathbb{C}^{N+1})$, and we have the bundle exact sequence on $X$:
\be\label{exact sequence}0 \rightarrow \gamma^{\ast}S \rightarrow \underline{\mathbb{C}}^{N+1} \rightarrow \gamma^{\ast}Q \rightarrow 0 .\ee

Let us for later convenience fix the index range throughout the article:
\be\label{index range}
1\leq i,j,k\leq n, \qquad n+1\leq \alpha, \beta \leq N,\qquad 1\leq a,b\leq N,
\ee
and sometimes make the Einstein summation convenience if no confusions arise.

For a fixed point $x\in X$, we can take a unitary change of $Z$, which for the sake of simplicity we will still denote by $Z$, so that $x=[1:0:\cdots :0]$ and near $x$ the manifold $X$ is defined by $z^{\alpha}=f^{\alpha}(z^1,\ldots,z^n)$ for each $n+1\leq \alpha \leq N$, where $z^a:=\frac{Z_a}{Z_0}$ for $1\leq a\leq N$,  $(z^1, \ldots , z^n)$ gives a local holomorphic coordinate in $X$ centered at $x$, and the holomorphic functions $f^{\alpha}$ satisfy
\be\label{f convention}f^{\alpha}(0)= 0, \ \ f^{\alpha}_i (0):= \frac{\partial f^{\alpha}} {\partial z^i} (0)=0,  \ \ \ \ \forall \ n+1\leq \alpha \leq N, \ \ \forall  \  1\leq i\leq n.\ee

\subsection{The reduction of the condition $\sigma_k(X,L)=0$}\label{subsection2.1}
For technical reasons we treat the dual bundle $\gamma^{\ast}Q^{\ast}$ instead of $\gamma^{\ast}(Q)$. Our first lemma is the following
\begin{lemma}\label{curvature lemma}
There exists a local holomorphic frame of $\gamma^{\ast}Q^{\ast}$ around $x$ such that the curvature matrix $\Theta=(\Theta_{\alpha\beta})$ ($n+1\leq\alpha,\beta\leq N$) at $x$ is given by
\be\label{curvature matrix}
\Theta_{\alpha \beta}(x)= -\sum_{j=1}^n \xi^{\alpha}_j\wedge\overline{\xi^{\beta}_j},\qquad \xi^{\alpha}_j:=\partial f^{\alpha}_j(0) = \sum_{i=1}^n f^{\alpha}_{ij}(0)dz^i,\qquad f^{\alpha}_{ij}:=\frac{\partial^2 f^{\alpha}} {\partial z^i \partial z^j}.
\ee
\end{lemma}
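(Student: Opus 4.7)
The plan is to realize $\gamma^{*}Q^{*}$ concretely as a subbundle of the trivial dual bundle $(\underline{\mathbb{C}}^{N+1})^{*}$, write down an explicit holomorphic frame on it using the functions $f^{\alpha}$, compute the induced Hermitian metric in that frame, and extract the curvature at $x$ by exploiting the fact that at $z=0$ the metric matrix is the identity to leading order.

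Dualizing the exact sequence \eqref{exact sequence} realizes $\gamma^{*}Q^{*}$ inside $(\underline{\mathbb{C}}^{N+1})^{*}$ as the subbundle of linear functionals that vanish on $\gamma^{*}S$. Near $x$, the fiber of $\gamma^{*}S$ is spanned by the affine lift of the point, $(1,z^{1},\dots,z^{n},f^{n+1},\dots,f^{N})$, together with the $n$ tangent vectors $(0,\dots,1,\dots,0,f^{n+1}_{i},\dots,f^{N}_{i})$. Letting $\phi^{0},\dots,\phi^{N}$ denote the basis dual to the standard basis, a direct linear-algebra calculation gives, for each $\alpha\in\{n+1,\dots,N\}$, the holomorphic section
$$\sigma^{\alpha}(z)\;=\;\phi^{\alpha}\;-\;\sum_{i=1}^{n}f^{\alpha}_{i}(z)\,\phi^{i}\;+\;g^{\alpha}(z)\,\phi^{0},\qquad g^{\alpha}(z):=\sum_{i=1}^{n}z^{i}f^{\alpha}_{i}(z)-f^{\alpha}(z),$$
which one checks annihilates all of the above vectors. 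Since $f^{\alpha}(0)=f^{\alpha}_{i}(0)=0$, one has $\sigma^{\alpha}(x)=\phi^{\alpha}$, so $\{\sigma^{\alpha}\}_{\alpha=n+1}^{N}$ is a local holomorphic frame of $\gamma^{*}Q^{*}$.

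The induced metric on $\gamma^{*}Q^{*}$ comes from the standard one on $(\underline{\mathbb{C}}^{N+1})^{*}$ in which $\langle\phi^{a},\phi^{b}\rangle=\delta^{ab}$, so
$$h_{\alpha\bar\beta}(z)\;=\;\delta_{\alpha\beta}\;+\;\sum_{i=1}^{n}f^{\alpha}_{i}(z)\,\overline{f^{\beta}_{i}(z)}\;+\;g^{\alpha}(z)\,\overline{g^{\beta}(z)}.$$
Since $g^{\alpha}=O(|z|^{2})$ and $f^{\alpha}_{i}(0)=0$, one gets $h_{\alpha\bar\beta}(0)=\delta_{\alpha\beta}$, $(\partial h)(0)=0$, and $\partial g^{\alpha}(0)=0$. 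Then the Chern connection matrix $\theta=H^{-1}\partial H$ vanishes at $x$, so the curvature reduces to $\Theta(x)=\bar\partial\theta(x)=\bar\partial\partial H(x)$. Using $\partial\bar\partial=-\bar\partial\partial$ together with $\partial f^{\alpha}_{i}(0)=\xi^{\alpha}_{i}$ and $\bar\partial\overline{f^{\beta}_{i}}(0)=\overline{\xi^{\beta}_{i}}$, a routine computation yields $\partial\bar\partial h_{\alpha\bar\beta}(0)=\sum_{i}\xi^{\alpha}_{i}\wedge\overline{\xi^{\beta}_{i}}$, whence $\Theta_{\alpha\beta}(x)=-\sum_{j}\xi^{\alpha}_{j}\wedge\overline{\xi^{\beta}_{j}}$, as desired.

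The main obstacle is the first step: guessing and verifying the closed form for $\sigma^{\alpha}$. The tangent-vector annihilation conditions quickly force the $\phi^{i}$-coefficients to be $-f^{\alpha}_{i}$, but the condition coming from the position vector then demands the correct correction term $g^{\alpha}=\sum z^{i}f^{\alpha}_{i}-f^{\alpha}$, which must vanish (to second order) at the base point so that $\{\sigma^{\alpha}\}$ reduces to the obvious frame there. Once this normalization is in place, the remaining metric and curvature computations are essentially forced, because $\partial H(0)$ and $\partial g^{\alpha}(0)$ both vanish automatically from the hypotheses \eqref{f convention}.
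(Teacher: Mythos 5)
Your proof is correct and follows the same overall strategy as the paper's: realize $\gamma^{*}Q^{*}$ as a holomorphic subbundle of the trivial dual bundle with the induced flat metric, build an explicit local holomorphic frame adapted to $x$, observe that the metric matrix is the identity at $x$ with vanishing first derivatives there (because $f^{\alpha}(0)=f^{\alpha}_{i}(0)=0$, and the correction $g^{\alpha}$ vanishes to second order), and read off the curvature at $x$ as $\bar\partial\partial$ of the metric. The one substantive difference is the frame construction. The paper first exhibits a frame $\{V_0,\dots,V_n\}$ of $\gamma^{*}S$, performs an auxiliary change of frame to $\{U_0,\dots,U_n\}$, and then dualizes to obtain $Y_{\alpha}=e^{*}_{\alpha}-\sum_{j=0}^{n}h^{\alpha}_{j}e^{*}_{j}$ with $h^{\alpha}_{0}=\tfrac{1}{w}(f^{\alpha}-z^{i}f^{\alpha}_{i})$ and $h^{\alpha}_{j}=f^{\alpha}_{j}-h^{\alpha}_{0}$. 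You instead solve directly for the annihilator: the conditions $\sigma^{\alpha}(P)=\sigma^{\alpha}(\partial_{j}P)=0$ pin down the $\phi^{i}$ and $\phi^{0}$ coefficients to $-f^{\alpha}_{i}$ and $g^{\alpha}=\sum z^{i}f^{\alpha}_{i}-f^{\alpha}$. The two frames differ by a holomorphic change that equals the identity to second order at $x$, so they produce the same curvature there; your route is a bit shorter since it bypasses the intermediate $\gamma^{*}S$ frame and yields a cleaner closed form for the metric matrix $h_{\alpha\bar\beta}=\delta_{\alpha\beta}+\sum_{i}f^{\alpha}_{i}\overline{f^{\beta}_{i}}+g^{\alpha}\overline{g^{\beta}}$. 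The concluding curvature computation (using $H(0)=I$, $\partial H(0)=0$, $\partial g^{\alpha}(0)=0$, and $\bar\partial\partial=-\partial\bar\partial$) is correct and matches the paper's.
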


\begin{proof}
The proof is similar to the arguments in \cite[\S 7.2]{LZ}, where a curvature matrix for $\gamma^{\ast}(S)$ is presented. For the reader's convenience, we still include a proof here.

Let $\{ e_0, e_1, \ldots , e_N\}$ be the standard frame of the trivial bundle $\underline{\mathbb{C}}^{N+1}$ on $X$ in (\ref{exact sequence}), and $\{ e^{\ast}_0, e^{\ast}_1, \ldots , e^{\ast}_N\}$ the frame in the trivial bundle $(\underline{\mathbb{C}}^{N+1})^{\ast}$ dual to $\{ e_0, \ldots , e_N\}$. In a neighborhood $x\in U\subset X$, a local holomorphic frame of $\gamma^{\ast}S$ is given by $\{ V_0, V_1, \ldots , V_n\}$ where
$$
V_0=e_0 + z^ie_i + f^{\alpha} e_{\alpha};\qquad
V_j =  e_0 + e_j + f^{\alpha}_j e_{\alpha}, \ \ 1\leq j\leq n.$$

Write
$$w=1-\sum_{i=1}^nz^i,\ \ \ U_0=\frac{1}{w} (V_0-\sum_{i=1}^nz^iV_i),\ \ \ U_j=V_j-U_0\ \ (1\leq j\leq n),$$
 we get a new local holomorphic frame $\{ U_0, U_1, \ldots , U_n\}$ of $\gamma^{\ast}S$ such that
$$U_j = e_j + h^{\alpha}_j e_{\alpha}, \ \ \ 0 \leq j \leq n, $$
where
$$
h^{\alpha}_0 = \frac{1}{w} (f^{\alpha}- z^if^{\alpha}_i);\qquad
h^{\alpha}_j = f^{\alpha}_j - h^{\alpha}_0 , \ \ 1\leq j\leq n.
$$

Let
$$Y_{\alpha} = e^{\ast }_{\alpha} - \sum_{j=0}^n h^{\alpha}_j e^{\ast }_j,\ \ n+1\leq \alpha \leq N.$$
Then $\{ Y_{n+1 }, \ldots , Y_N\}$ forms a local holomorphic frame of $\gamma^{\ast}Q^{\ast}$. Let $g=\langle \, , \rangle $ be the restriction on $\gamma^{\ast}Q^{\ast}$ of the flat metric of the trivial bundle $(\underline{\mathbb{C}}^{N+1})^{\ast}$ so that $\{ e^{\ast}_0, e^{\ast}_1, \ldots , e^{\ast}_N\}$ is unitary, then
\be\label{g}g=(\langle Y_{\alpha}, \overline{Y_{\beta}} \rangle) = (\delta_{\alpha \beta} + \sum_{j=0}^n h^{\alpha}_j \overline{h^{\beta}_j})=I_{N-n}+F\overline{F^t},\ \ F=(h^{\alpha}_j).\ee
Here and in what follows $\delta_{\alpha\beta}$ always denotes the Kronecker delta, $I_r$ the identity matrix of rank $r$, and ``$t$" the transpose of a matrix.

At the origin $x$, we have
\begin{eqnarray}\label{value at zero}
\left\{\begin{array}{ll}
h^{\alpha}_0(0)=0,\ \ dh^{\alpha}_0(0)=0,\qquad(n+1\leq\alpha\leq N)\\
~\\
h^{\alpha}_j(0)=0,\ \ dh^{\alpha}_j(0)=df^{\alpha}_j(0), \qquad (n+1\leq\alpha\leq N, \ 1\leq j\leq n)
\end{array} \right.
\end{eqnarray}
and so
\be\label{g0}g(0)=0,\ \ \ dg(0)=(0).\ee
Thus at the origin $x$ the curvature matrix of $\gamma^{\ast}Q^{\ast}$ under the frame $\{ Y_{n+1 }, \ldots , Y_N\}$ is given by
\be\begin{split}
\big(\Theta_{\alpha\beta}\big)(0)&=\bar{\partial}\big[(\partial g)\cdot g^{-1}\big](0)\\
&=- \partial F \wedge (\overline{\partial F})^t(0)\qquad\big(\text{by (\ref{g}) and (\ref{g0})}\big)\\
&=-\big(\partial h^{\alpha}_i\big)\wedge\big(\overline{\partial h^{\beta}_{j}}\big)^t(0)\\
&=\Big(-\sum_{j=0}^{n}\partial h^{\alpha}_j\wedge
\overline{\partial h^{\beta}_j}\Big)(0)\\
&=\Big(-\sum_{j=1}^{n}\partial f^{\alpha}_j\wedge
\overline{\partial f^{\beta}_j}\Big)(0),\qquad\big(\text{by (\ref{value at zero})}\big)
\end{split}\nonumber\ee
which gives the desired (\ref{curvature matrix}) and thus completes the proof.
\end{proof}

\begin{definition}\label{def}
Denote the symmetric $n\times n$ matrix $H^{\alpha}:=\big(f^{\alpha}_{ij}(0)\big)$ $(n+1\leq\alpha\leq N)$. For any column vector $u$, let us write $H^{\alpha}_u:=H^{\alpha}\cdot u\in {\mathbb C}^n$. For $k$ column vectors $u^{(1)},\ldots,u^{(k)}$, we simply denote the linear dependence of them by $u^{(1)}\wedge\cdots\wedge u^{(k)}=0$. As usual we denote by
$\ker(H^{\alpha}):=\{u\in\mathbb{C}^n~|~H^{\alpha}_u=0\}$.
\end{definition}

Since the condition $\sigma_k(X,L)=0$ is trivial when $k>N-n$. So we focus on the cases $k\leq N-n$. With Lemma \ref{curvature lemma} in hand, we are now ready to show the following
\begin{lemma}\label{reduce algebraic lemma}
With the above notation and symbols understood and assume that $2\leq k\leq N-n$. Then $\sigma_k(X,L)=0$ is equivalent to
\be
H^{\alpha_1}_u \wedge \cdots \wedge H^{\alpha_k}_u = 0, \ \ \ \ \  \forall \ n+1\leq \alpha_1 < \cdots < \alpha_k\leq N, \ \ \forall \  u \in {\mathbb C}^n. \label{eq:width2}
\ee
\end{lemma}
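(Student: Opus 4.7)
My plan is to substitute the curvature formula from Lemma \ref{curvature lemma} into the Chern--Weil expression for $C_k(\gamma^\ast Q^\ast)$, derive a pointwise sum-of-squares representation for $C_k(\gamma^\ast Q)$, and then translate the vanishing of the ``squares'' into the algebraic condition on the $H^\alpha$.

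As noted in Section \ref{subsection2.1}, $\sigma_k(X,L)=0$ is equivalent to the Chern form $C_k(\gamma^\ast Q)$ vanishing pointwise, and the Chern forms of dual bundles (with dual induced metrics) differ only by the sign $(-1)^k$ at the form level, so I may work with the curvature of $\gamma^\ast Q^\ast$ furnished by Lemma \ref{curvature lemma}. First I will expand $C_k(\gamma^\ast Q^\ast)(x)$ as (a constant times) the sum $\sum_{\alpha_1<\cdots<\alpha_k}$ of the $k\times k$ principal minors of $\Theta$, with wedge product as multiplication, substitute $\Theta_{\alpha\beta}=-\sum_j\xi^\alpha_j\wedge\overline{\xi^\beta_j}$, and move all holomorphic $1$-forms to the left of the antiholomorphic ones, which produces a sign $(-1)^{\binom{k}{2}}$. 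After antisymmetrizing in the $\alpha$-indices, the result rearranges into a nonzero constant multiple of the $(k,k)$-form
\[
\sum_{\alpha_1,\ldots,\alpha_k}\,\sum_{j_1,\ldots,j_k}\Delta^{\alpha_1\cdots\alpha_k}_{j_1\cdots j_k}\wedge\overline{\Delta^{\alpha_1\cdots\alpha_k}_{j_1\cdots j_k}},\qquad\Delta^{\alpha_1\cdots\alpha_k}_{j_1\cdots j_k}:=\sum_{\sigma\in S_k}\mathrm{sgn}(\sigma)\,\xi^{\alpha_{\sigma(1)}}_{j_1}\wedge\cdots\wedge\xi^{\alpha_{\sigma(k)}}_{j_k}.
\]
Since for any $(k,0)$-form $\omega$ the form $(\sqrt{-1})^{k^2}\,\omega\wedge\bar\omega$ is strongly nonnegative, the above sum is, up to a global sign, strongly nonnegative, and hence vanishes at $x$ if and only if every $\Delta^{\alpha_1\cdots\alpha_k}_{j_1\cdots j_k}(x)=0$.

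For the second step I translate this vanishing into the stated algebraic identity. Substituting $\xi^\alpha_j=\sum_i H^\alpha_{ij}\,dz^i$ and expanding, $\Delta^{\alpha_1\cdots\alpha_k}_{j_1\cdots j_k}(x)=0$ becomes, for every tuple $(i_1,\ldots,i_k)$, a polynomial identity in the entries $H^\alpha_{ij}$ (the antisymmetrization over $(i_1,\ldots,i_k)$ is forced by the wedge). On the other hand, $H^{\alpha_1}_u\wedge\cdots\wedge H^{\alpha_k}_u=0$ for every $u\in\mathbb C^n$ is equivalent, after reading off the $(j_1,\ldots,j_k)$-component of the wedge as a degree-$k$ polynomial in $u$ and symmetrizing its coefficients in the $u$-indices, to another polynomial system in the $H^\alpha_{ij}$ indexed by $k$-tuples of $i$'s. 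A direct comparison shows that, after the formal interchange of the two sets of dummy indices $(j_1,\ldots,j_k)\leftrightarrow(i_1,\ldots,i_k)$, the two systems coincide; this interchange is permissible precisely because each $H^\alpha$ is symmetric, $H^\alpha_{ij}=H^\alpha_{ji}$.

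The main difficulty lies in Step 1: one must correctly account for the sign $(-1)^{\binom{k}{2}}$ together with the factor of $1/k!$ produced by antisymmetrizing over $\alpha$ in order to land on a manifestly strongly nonnegative sum---any sign error invalidates the conclusion that $C_k=0$ forces each $\Delta=0$. A subsidiary subtlety in Step 2 is the apparent mismatch in symmetry types between the two polynomial systems: $\Delta=0$ yields a system antisymmetric in the $\alpha$'s but symmetric in the $j$'s, whereas $H^{\alpha_1}_u\wedge\cdots=0$ yields one antisymmetric in both, and reconciling them genuinely uses the full symmetry of each $H^\alpha$.
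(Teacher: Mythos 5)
Your proposal follows essentially the same path as the paper: expand $C_k(\gamma^\ast Q^\ast)$ via Chern--Weil using Lemma \ref{curvature lemma}, organize the result into a sum of strongly nonnegative ``square'' terms, conclude that each must vanish pointwise, and then translate that vanishing into the wedge condition on the $H^\alpha$. Your $\Delta^{\alpha_1\cdots\alpha_k}_{j_1\cdots j_k}$ (antisymmetrize the $\alpha$'s with signs, $j$'s fixed) is, after reordering the $(1,0)$-form factors, exactly $k!$ times the paper's $\Psi^{\alpha_I}_J$ (symmetrize the $j$'s without signs, $\alpha$'s fixed), so the decomposition and the rest of the argument coincide up to this notational choice.
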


\begin{proof}
Denote by $S_k$ the group of symmetry for $k$ elements, and write $\alpha_I=(\alpha_1, \ldots ,\alpha_k)$ where $n+1\leq \alpha_1 < \cdots < \alpha_k\leq N$ for any multi-index of length $k$. Then  at the origin $x$, The Chern-Weil theory via Lemma \ref{curvature lemma} tells us that
\begin{eqnarray*}
&  &(\frac{2\pi}{\sqrt{-1}})^kC_k(\gamma^{\ast}Q^{\ast})\\
&= &  \sum_{ \alpha_I}\sum_{\tau \in S_k} \mbox{sgn} (\tau) \,\Theta_{\alpha_1 \alpha_{\tau (1)}} \wedge \cdots \wedge \Theta_{\alpha_k \alpha_{\tau (k)}} \\
& = &  \sum_{ \alpha_I}\sum_{\tau \!,\sigma \in S_k} \frac{1}{k!} \mbox{sgn} (\tau) \mbox{sgn} (\sigma) \,\Theta_{\alpha_{\sigma (1)} \alpha_{\tau (1)}} \wedge \cdots \wedge \Theta_{\alpha_{\sigma (k)} \alpha_{\tau (k)}} \\
& = & \sum_{ \alpha_I} \sum_{\tau\!,\sigma \in S_k} \sum_{j_1\!,\ldots , j_k=1}^n  \frac{(-1)^k}{k!} \mbox{sgn} (\tau ) \mbox{sgn} (\sigma)  \xi^{\alpha_{\sigma (1)}}_{j_1}  \wedge\overline{ \xi_{j_1}^{\alpha_{\tau (1)}} } \wedge \cdots \wedge \xi^{\alpha_{\sigma (k)}}_{j_k} \wedge \overline{ \xi_{j_k}^{\alpha_{\tau (k)}} } \\
& = &  \sum_{\alpha_I,\,\tau ,\sigma ,\, j_i} \frac{(-1)^k}{k!} \mbox{sgn} (\tau ) \mbox{sgn} (\sigma)  (-1)^{\frac{1}{2}k(k-1)}\xi^{\alpha_{\sigma (1)}}_{j_1} \wedge\cdots\wedge \xi^{\alpha_{\sigma (k)}}_{j_k}\wedge \overline{ \xi_{j_1}^{\alpha_{\tau (1)}} }\wedge \cdots \wedge \overline{ \xi_{j_k}^{\alpha_{\tau (k)}} } \\
& = & (-1)^k k! \sum_{\alpha_I,J} (-1)^{\frac{1}{2}k(k-1)} \Psi^{\alpha_I}_J \wedge \overline{\Psi^{\alpha_I}_J},
\end{eqnarray*}
where $J=(j_1, \ldots , j_k)$ with each $j_i$ running from $1$ to $n$, and for fixed $\alpha_I$ and $J$,
$$ \Psi^{\alpha_I}_J = \frac{1}{k!} \sum_{\pi \in S_k} \xi^{\alpha_1}_{j_{\pi (1)}} \wedge \cdots \wedge \xi^{\alpha_k}_{j_{\pi (k)}}.$$

Since as a form
$$\,(\sqrt{-1})^k (-1)^{\frac{1}{2}k(k-1)} \Psi^{\alpha_I}_{J} \wedge \overline{\Psi^{\alpha_I}_{J}} \geq 0$$
for each $\alpha_I$ and $J$, we know that the $(k,k)$-form
$ (-1)^k C_k( \gamma^{\ast}Q^{\ast} ) \geq 0$
everywhere on $X$, and when $\sigma_k(X,L) = c_k(\gamma^{\ast}Q)=0$, this form is identically zero as $c_k(\gamma^{\ast}Q)=0$ is represented by it, so each $\Psi^{\alpha_I}_J=0$. This means that for any given $n+1\leq \alpha_1< \cdots <\alpha_k\leq N$ and any $1\leq j_i\leq n$, $i=1, 2, \ldots , k$, we have
\be
\sum_{\pi \in S_k} \xi^{\alpha_1}_{j_{\pi (1)}} \wedge \cdots \wedge \xi^{\alpha_k}_{j_{\pi (k)}} = 0. \label{eq:width}
\ee
At the origin $x$, we have $\xi^{\alpha}_j = \sum_{i=1}^n f^{\alpha}_{ij}(0) dz_i$ and $f^{\alpha}_{ij}(0)$ are the components of the matrices $H^{\alpha}$. Now it is not hard to see that the condition (\ref{eq:width}) is equivalent to (\ref{eq:width2}).
\end{proof}

The condition (\ref{eq:width2}) does not mean much if the matrices $H^{\alpha}$ are highly degenerate. But in our case they are not as we have the following fact, which gives another restriction on the matrices $H^{\alpha}$ for \emph{generic} $x$.
\begin{lemma}\label{generic intersection}
For generic $x\in X$, we have
\be
\bigcap_{\alpha =n+1}^N \ker (H^{\alpha}) = 0.  \label{eq:nondeg}
\ee
\end{lemma}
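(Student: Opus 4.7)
The plan is to identify $\bigcap_\alpha \ker(H^\alpha)$ at $x$ with $\ker(d\gamma)_x$, the kernel of the differential of the Gauss map, and then to appeal to the classical fact that the Gauss map of a smooth projective variety is generically finite provided the variety is not a linear subspace.

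First I would compute $(d\gamma)_x$ explicitly in the setup of Lemma~\ref{curvature lemma}. The Grassmannian tangent space $T_{\gamma(x)} G_{n+1}(\mathbb{C}^{N+1})$ is canonically $\mathrm{Hom}(\gamma^\ast S_x,\, \gamma^\ast Q_x)$. Differentiating the local frame $\{V_0,V_1,\ldots,V_n\}$ of $\gamma^\ast S$ at the origin and reducing modulo $\gamma^\ast S|_x$, condition (\ref{f convention}) forces the $V_0$-direction to die, while $\partial V_j/\partial z^k|_{x} = \sum_\alpha H^\alpha_{jk}\,e_\alpha$ for $1\leq j\leq n$. Hence for $u=(u^1,\ldots,u^n)$, the image $(d\gamma)_x(u)$ is represented, in these frames, by the matrix whose $V_j$-column ($j\geq 1$) is $H^\alpha u$ and whose $V_0$-column is zero. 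Consequently $\ker(d\gamma)_x = \bigcap_\alpha \ker(H^\alpha)$.

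Next I would invoke the following classical result: the Gauss map of a smooth projective subvariety $X \subset \mathbb{P}^N$ is generically finite --- equivalently, is birational onto its image --- unless $X$ is a linear subspace. In our situation $X \hookrightarrow \mathbb{P}^N$ is non-degenerate and of positive codimension (the case $N=n$ making the statement vacuous, as then the index range for $\alpha$ is empty), so $X$ cannot be a linear subspace. It follows that $(d\gamma)_x$ is injective on a Zariski open dense subset of $X$, which via the identification of the previous step gives $\bigcap_\alpha \ker(H^\alpha)=0$ at generic $x$. The only step demanding real care is the coordinate identification $\ker(d\gamma)_x = \bigcap_\alpha \ker(H^\alpha)$; granted this, the lemma is a direct consequence of a standard fact in projective differential geometry.
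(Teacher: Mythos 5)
Your proof is correct, and it takes a genuinely different route from the paper's. The paper establishes the lemma by noting that $\sigma_1(X,L)=(n+1)L-c_1=c_1(\gamma^{\ast}Q)$ is an ample class (this relies on the adjunction-theoretic fact, quoted from Beltrametti--Sommese, that $(n+1)L+K_X$ is ample unless $(X,L)=(\mathbb{P}^n,\mathcal{O}_{\mathbb{P}^n}(1))$, which is excluded because $N>n$), and then uses Lemma~\ref{curvature lemma} to represent this class by the nonnegative $(1,1)$-form $\frac{\sqrt{-1}}{2\pi}\sum_{\alpha,j}\xi^{\alpha}_j\wedge\overline{\xi^{\alpha}_j}$; ampleness forces the form to be positive-definite at a generic point, which is exactly the non-degeneracy condition. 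You instead identify $\bigcap_{\alpha}\ker(H^{\alpha})$ with $\ker(d\gamma)_x$ (your local computation of $d\gamma$ is right) and invoke Zak's theorem that the Gauss map of a smooth nonlinear projective variety is finite --- indeed birational onto its image --- together with generic smoothness in characteristic zero to conclude that $d\gamma$ is generically unramified. The two arguments are really two faces of the same phenomenon: the form appearing in the paper's proof is precisely the pullback by $\gamma$ of the Kähler form of the Plücker embedding, so ``ample class represented by a nonnegative form is generically positive'' translates into ``generically finite Gauss map is generically unramified.'' The paper's route is self-contained given Lemma~\ref{curvature lemma} plus a single citation to adjunction theory; yours is conceptually more direct but rests on Zak's rather deep theorem. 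One small correction: the case $N=n$ would actually make the asserted equality $\bigcap_{\alpha}\ker(H^{\alpha})=0$ \emph{false} under the empty-intersection convention, not vacuously true; what saves the day is that $N=n$ simply cannot occur here, since it would force $(X,L)=(\mathbb{P}^n,\mathcal{O}_{\mathbb{P}^n}(1))$ --- which is exactly the observation the paper makes at the start of its proof.
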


\begin{proof}
First note that $(X,L)\neq({\mathbb P}^n, {\mathcal O}_{\mathbb{P}^n}(1))$, otherwise $N=n$, a contradiction.
Recall a classical fact that (\cite[p. 159]{BS}), for an ample line bundle $L$, the line bundle $(n+1)L+K_X$ is always ample, provided that $(X,L)\neq({\mathbb P}^n, {\mathcal O}_{\mathbb{P}^n}(1))$.  Here $K_X$ is the canonical line bundle of $X$. This implies that $$\sigma_1(X,L)=c_1(\gamma^{\ast}Q)=(n+1)L-c_1$$
is ample.

By Lemma \ref{curvature lemma} we know that $c_1(\gamma^{\ast}Q)$ is represented by the following nonnegative $(1,1)$-form:
$$ - C_1(\gamma^{\ast}Q^{\ast})= - \frac{\sqrt{-1}}{2\pi} \sum_{\alpha =n+1}^N \Theta_{\alpha \alpha} =  \frac{\sqrt{-1}}{2\pi} \sum_{\alpha =n+1}^N \sum_{j=1}^n \xi^{\alpha}_j \wedge \overline{\xi^{\alpha}_j}. $$
So at any generic point $x$ in $X$, this $(1,1)$-form must be positive-definite. That is, for any $(1,0)$-type tangent vector $0\neq\sum_{i=1}^nu_i\frac{\partial}{\partial z^i}$ at $x$, there exists some $\alpha$ and some $j$ such that
$$0\neq\xi^{\alpha}_j(\sum_{i=1}^nu_i\frac{\partial}{\partial z^i})=\sum_{i=1}^nf^{\alpha}_{ij}(0)u_i,$$
which, in the notation of Definition \ref{def}, is equivalent to the fact that for any column vector $0\neq u\in\mathbb{C}^n$ we have $H^{\alpha}_u\neq0$ for some $\alpha$. This is exactly (\ref{eq:nondeg}).
\end{proof}

\subsection{The second fundamental form of $X$}
In this subsection we relate the matrices $H^{\alpha}=\big(f^{\alpha}_{ij}(0)\big)$ to the second fundamental form of $X$ in $\mathbb{P}^N$. More precisely, we show in Lemma \ref{second fundamental form lemma} that they can be realized as the coefficients of this second fundamental form under some local frame field, which shall be used in the proof of the Theorems stated in  Section \ref{introduction}.

Now let ${\mathbb P}^N$ be endowed with the Fubini-Study metric and $X^n\subset{\mathbb P}^N$ be endowed with the induced metric. We take $x\in X$ and still follow the notation and symbols introduced at the beginning of  Section \ref{reduction}. Denote by $N_xX\cong {\mathbb C}^{N-n}$ the orthogonal complement of $T_xX$ in $T_x{\mathbb P}^N$, and by $V^{\perp}$ the $N_xX$-component of any $V\in T_x{\mathbb P}^N$. The complex vector bundle $NX$ can be \emph{smoothly} identified with the normal bundle of $X$ which is the holomorphic quotient $T{\mathbb P}^N/TX$. The second fundamental form $II$ of $X\subset{\mathbb P}^N$  is the symmetric bilinear map
\be\begin{split}
II: \ \ \ & TX \times TX \longrightarrow NX\\
&(V,V') \longmapsto II(V, V'):=(\nabla_V V')^{\perp}
\end{split}\nonumber\ee
for any type $(1,0)$ tangent vector fields $V$, $V'$ in $X$, and $\nabla$ is the Levi-Civita connection of ${\mathbb P}^N$.

\begin{lemma}\label{second fundamental form lemma}
Around $x$ there exists a local tangent frame $\{e_1,\ldots,e_n\}$, and a local normal frame $\{e_{n+1},\ldots,e_N\}$ such that
\be\label{1} II(e_i,e_j)=\sum_{\alpha=n+1}^{N}f^{\alpha}_{ij}e_{\alpha},\ \ 1\leq i,j\leq n.\ee
Moreover $\{e_1,\ldots,e_N\}$ is orthonormal at $x$.
\end{lemma}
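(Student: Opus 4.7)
The plan is to exploit a ``K\"ahler normal'' property of the Fubini--Study metric at the point $x$. In the affine chart $[1:z^1:\cdots:z^N]$ centered at $x$, the K\"ahler potential $\log\big(1+\sum_{a=1}^N |z^a|^2\big)$ yields, by a direct Taylor computation, $g_{a\bar b}(0)=\delta_{ab}$ and $\partial_c g_{a\bar b}(0)=0$. I would use the first to conclude that the coordinate frame $\{\partial/\partial z^a|_x\}$ is unitary at $x$, and the second to conclude that every Christoffel symbol of the Levi-Civita connection $\nabla$ of $\mathbb{P}^N$ vanishes at $x$, so $\nabla$ reduces to ordinary Euclidean differentiation at that single point.

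For the frames themselves, take the natural holomorphic tangent frame adapted to the graph representation of $X$,
\[
e_i \;:=\; V_i \;=\; \frac{\partial}{\partial z^i} + \sum_{\alpha=n+1}^{N} f^\alpha_i\,\frac{\partial}{\partial z^\alpha}, \qquad 1\leq i\leq n,
\]
and let $e_\alpha$ ($n+1\leq\alpha\leq N$) be the pointwise orthogonal projection of $\partial/\partial z^\alpha$ onto the normal bundle $NX \subset T\mathbb{P}^N\big|_X$; this is smooth because both the FS metric and $TX$ vary smoothly with the base point. From $f^\alpha_i(0)=0$ in (\ref{f convention}) we see $e_i(x)=\partial/\partial z^i|_x$, and since $\partial/\partial z^\alpha|_x \perp T_xX$ the projection is the identity at $x$, so $e_\alpha(x)=\partial/\partial z^\alpha|_x$. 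Combined with the observation from the first paragraph, this immediately yields the ``moreover'' clause that $\{e_1,\ldots,e_N\}$ is orthonormal at $x$.

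For the main identity I would expand $V_j$ in coordinates as $V_j^i=\delta^i_j$, $V_j^\alpha=f^\alpha_j$, and compute
\[
\nabla_{V_i}V_j(x) \;=\; V_i(V_j^c)(x)\,\frac{\partial}{\partial z^c}\bigg|_x \;=\; \sum_{\alpha=n+1}^{N} f^\alpha_{ij}(0)\,\frac{\partial}{\partial z^\alpha}\bigg|_x,
\]
using that the Christoffel terms drop out at $x$, that $f^\alpha_j$ depends only on $z^1,\ldots,z^n$ so the ``vertical'' part of $V_i$ contributes nothing to $V_i(V_j^\alpha)$, and that $\partial f^\alpha_j/\partial z^i(0)=f^\alpha_{ij}(0)$. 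The right-hand side already lies in the normal direction at $x$, so after normal projection it equals $\sum_\alpha f^\alpha_{ij}(0)\,e_\alpha(x)$, yielding the desired equality at $x$.

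The only obstacle is bookkeeping: verifying $\partial_c g_{a\bar b}(0)=0$ for the Fubini--Study potential (standard but essential) and confirming that the orthogonal projection defining $e_\alpha$ is smooth near $x$, which follows from the non-degeneracy and smoothness of the induced metric. No deeper geometric input seems necessary beyond these two ingredients, so the argument should be essentially a packaging of the standard ``normal coordinates at a point'' trick adapted to the submanifold geometry.
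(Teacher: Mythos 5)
Your approach is correct \emph{at the point} $x$ and is a genuine shortcut compared to the paper's proof, but it does not establish the lemma as stated. The identity
$II(e_i,e_j)=\sum_{\alpha}f^{\alpha}_{ij}e_{\alpha}$
is asserted on a neighborhood of $x$ (the coefficients $f^\alpha_{ij}$ are functions, and only the ``moreover'' clause about orthonormality is restricted to the single point $x$). Your entire argument hinges on the fact that the Christoffel symbols of the Fubini--Study metric vanish \emph{at} $x$, which collapses $\nabla_{V_i}V_j$ to the Euclidean second derivative; at any nearby point $y\neq x$ the Christoffel terms reappear, and nothing in your proposal shows that they contribute only tangentially there. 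That is exactly the content of the paper's explicit computation (using the full connection matrix $\theta_{ab}=-\tfrac{1}{\eta}(\delta_{ab}\sum_c\overline{z^c}\,dz^c+\overline{z^a}\,dz^b)$), which shows that the Christoffel contribution equals $-\tfrac{1}{\eta}(\eta_i e_j+\eta_j e_i)$, manifestly tangential, so that the normal projection of $\nabla_{e_i}e_j$ is $\sum_\alpha f^\alpha_{ij}\,\varepsilon_\alpha^\perp$ \emph{everywhere} in the chart. Your Kähler-normal-coordinate trick cannot see this cancellation away from the center.

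Two further remarks. First, your choice of frames and your verification of orthonormality at $x$ agree with the paper's, and the Taylor computations $g_{a\bar b}(0)=\delta_{ab}$ and $\partial_c g_{a\bar b}(0)=0$ are correct, so the pointwise conclusion is valid. Second, in mitigation, the downstream uses of this lemma (identifying $H^\alpha=(f^\alpha_{ij}(0))$ with the second fundamental form at a \emph{fixed} point $x$, as in the dimension count for $\widetilde{T}^{(2)}_xX$; the moving-frame arguments in Section 5 switch to genuine unitary frames rather than reusing this identity off of $x$) only require the pointwise statement. So your argument proves what the paper actually needs, but not what the lemma literally claims; to close the gap you would need either the paper's explicit Christoffel-symbol computation, or a separate argument that the Christoffel contribution to $\nabla_{e_i}e_j$ remains tangential throughout the chart.
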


\begin{proof}
We still adopt the notation and symbols before. Write $\varepsilon_i:=\frac{\partial}{\partial z^i}$ ($1\leq i\leq N$).
Then under the natural tangent frame $\{ \varepsilon_1, \ldots \varepsilon_N\}$, the Fubini-Study metric around the origin $x$ is represented by the matrix (\cite[p. 174]{Zheng00})
\be\label{metric}g =(g_{i\bar{j}})=\frac{1}{\eta}I_N - \frac{1}{\eta^2} \overline{z}^t\,z;\qquad \eta:=1+\sum_{i=1}^N|z^i|^2, \ \ z=(z^1,\ldots,z^N).\ee
Its inverse matrix is given by
$$g^{-1} = \eta (I_N+ \overline{z}^t\,z).$$
Thus the matrix of the Levi-Civita connection under the frame $\{\varepsilon_1,\ldots,\varepsilon_N\}$ is
\be \nonumber
\theta = (\partial g) g^{-1} = - \frac{1}{\eta}\big[(\partial \eta)I_N + \overline{z}^t \, d\,z)\big],
\ee
or equivalently
\be\label{2}
\theta_{ab} = - \frac{1}{\eta}\big(\delta_{ab}\sum_{c=1}^N \overline{z^c}\,dz^c  +\overline{z^a}\,dz^b\big), \ \ \ \ \ \  \ \forall \ 1\leq a, b\leq N,
\ee
and so
\be\label{3}\nabla\varepsilon_a=\sum_{b=1}^N\theta_{ab}\varepsilon_b,\ \ \ \forall \ 1\leq a\leq N. \ee

Let
\be e_i:=\varepsilon_i+\sum_{\beta=n+1}^Nf^{\beta}_i\varepsilon_{\beta},\ \ \ e_{\alpha}:= \varepsilon_{\alpha}^{\perp}.\ \ \ \ (1\leq i\leq N, \ \ n+1\leq\alpha\leq N) \ee
Then $\{ e_1, \ldots , e_n\}$ form a local homomorphic tangent frame of $X$ and $\{e_{n+1},\dots,e_N\}$ become a local frame in the normal bundle $NX$. It is easy to see that $\{e_1,\ldots,e_N\}$ is orthonormal at $x$. It suffices to verify (\ref{1}), which is a routine calculation. Indeed, by adopting the index range (\ref{index range})
we have for any fixed $1\leq i,j\leq n$,
\begin{eqnarray*}
\nabla_{e_i}e_j&=&\sum_{\alpha,\beta}
\nabla_{(\varepsilon_i+f^{\alpha}_i\varepsilon_{\alpha})}
(\varepsilon_j+f^{\beta}_j\varepsilon_{\beta})\\
&=&\sum_{\alpha,\beta}\big[\nabla_{\varepsilon_i}\varepsilon_j+f^{\alpha}_i
\nabla_{\varepsilon_{\alpha}}\varepsilon_j+\varepsilon_i(f^{\beta}_j)\varepsilon_{\beta}+
f^{\beta}_j\nabla_{\varepsilon_i}\varepsilon_{\beta}+f^{\alpha}_i\varepsilon_{\alpha}(f^{\beta}_j)\varepsilon_{\beta}
+f^{\alpha}_if^{\beta}_j\nabla_{\varepsilon_{\alpha}}\varepsilon_{\beta}\big]\\
&=&\sum_{a,\beta}\big[\theta_{ja}(e_i)\varepsilon_a+f^{\beta}_{ij} \varepsilon_{\beta}+   f^{\beta}_j\theta_{\beta a}(e_i)\varepsilon_a\big]  \qquad\big(\text{by (\ref{3})}\big) \\
&=&\sum_{\beta}f^{\beta}_{ij} \varepsilon_{\beta}+\sum_{a,\beta} \big[\theta_{ja}(e_i)+f^{\beta}_j\theta_{\beta a}(e_i)\big]\varepsilon_a\\
& = &  \sum_{\beta}f^{\beta}_{ij} \varepsilon_{\beta} +\sum_{k,\beta}\big[\theta_{jk}(e_i) +f^{\beta}_j \theta_{\beta k}(e_i) \big] \varepsilon_k + \sum_{\alpha,\beta}\big[\theta_{j\alpha}(e_i) + f^{\beta}_j \theta_{\beta \alpha}(e_i) \big] \varepsilon_{\alpha} \\
& = &\sum_{\beta}f^{\beta}_{ij} \varepsilon_{\beta} -\frac{1}{\eta} (\eta_i \varepsilon_j +\eta_j \varepsilon_i)  -\sum_{\alpha}\frac{1}{\eta}\big[ f^{\alpha}_i\eta_j + f^{\alpha}_j\eta_i\big] \varepsilon_{\alpha} \ \ \Big(\text{by (\ref{2}), $\eta_i:=\overline{{z}^i}+\sum_{\alpha}f^{\alpha}_i\overline{{z}^{\alpha}}$}\Big)\\
& = & \sum_{\beta}f^{\beta}_{ij} \varepsilon_{\beta} - \frac{1}{\eta} (\eta_i e_j + \eta_j e_i).
\end{eqnarray*}
Therefore,
\be\nonumber
II(e_i,e_j) = \big( \nabla_{e_i}e_j \big)^{\perp} = \sum_{\alpha=n+1}^N f^{\alpha}_{ij} \,\varepsilon_{\alpha}^{\perp}=\sum_{\alpha=n+1}^N f^{\alpha}_{ij} \,e_{\alpha}.
\ee
\end{proof}

\section{The algebraic question}\label{algebraic question}
It turns out that a key factor in analyzing the codimension $N-n$ of the projective submanifold $X\subset {\mathbb P}^N$ is the dimension of the linear space spanned by $\{ H^{n+1}, \ldots , H^N\}$, under the restrictions (\ref{eq:width2}) and (\ref{eq:nondeg}). So we give the following
\begin{definition}\label{definition2}
\begin{enumerate}
\item
Fix positive integers $n\geq k\geq 2$. A set of symmetric $n\times n$ matrices ${\mathcal H}=\{ H^1, \ldots , H^r\}$ ($r$ can vary and $r\geq k$) is called an \emph{$(n,k)$-system} if it satisfies the \emph{width-$k$ condition}:
\be\label{width-k}H^{i_1}_u \wedge \cdots \wedge H^{i_k}_u = 0, \ \ \ \ \  \forall \ 1\leq i_1 < \cdots < i_k\leq r, \ \ \forall \  u \in {\mathbb C}^n,\ee
and the \emph{non-degeneracy condition}:
\be\label{non-degeneracy}\bigcap_{i=1}^r \ker (H^i)=0.\ee
\item
We define $l(n,k)$ to be
$$l(n,k):=\max\big\{\text{rank}(\mathcal{H})~|~\text{$\mathcal{H}$ are $(n,k)$-systems}\big\}.$$
\end{enumerate}
\end{definition}

With this definition in hand, the algebraic situation we are concerned with becomes the following

\begin{question}\label{question2}
What is the value of $l(n,k)$? When $l(n,k)\geq k$, for those $(n,k)$-systems ${\mathcal H}$ with $\text{rank}(\mathcal{H})\geq k$, what kind of special structure must they possess?
\end{question}

Clearly $l(n,k)\leq\frac{1}{2}n(n+1)$, which is the dimension of the space of all $n\times n$ symmetric matrices. On the other hand we have the following lower bound.

\begin{example}\label{examplelowerbound}
We have
\be\nonumber
l(n,k) \geq \frac{1}{2}(k-1)(k-2) + 1,\ \ \ \text{when $k\geq 4$}.
\ee
Indeed we can take a basis all symmetric $(k-2)\times (k-2)$ matrices by adding zero blocks at right and bottom to be our $\{ H^1, \ldots , H^m\}$, where $m=\frac{1}{2}(k-1)(k-2)$, and add on one more non-degenerate $n\times n$ matrix $H^{m+1}$. Note that $m\geq k-1$ when $k\geq 4$, and the first $m$ matrices satisfy the width-$(k-1)$ condition. So the whole $\{ H^1, \ldots , H^m, H^{m+1}\}$ satisfy the width-$k$ condition and it also satisfies the non-degeneracy condition as $\ker(H^{m+1})=\{0\}$, thus it is an $(n,k)$-system.
\end{example}

This example shows that, $l(n,k)\geq k$ when $k\geq 4$. So the real question is what can we say about an $(n,k)$-system ${\mathcal H}$ when $\text{rank}({\mathcal H})\geq k$?

For $k=2$, the answer is rather special: $l$ must be $1$ regardless of the value of $n$, namely, $l(n,2)=1$.
\begin{lemma} \label{codim1}
If two symmetric $n\times n$ matrices $H^1$ and $H^2$ satisfy the condition $H^1_u \wedge H^2_u =0$ for any $u\in {\mathbb C}^n$. Then they must be proportional. Namely, one is a constant multiple of the other.
\end{lemma}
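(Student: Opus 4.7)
The plan is to reduce the statement to showing that a suitably normalised remainder vanishes. Ignoring the trivial case $H^1=0$, I would fix a vector $u_0 \in \mathbb{C}^n$ with $H^1 u_0 \neq 0$ and, using the hypothesis at $u_0$, produce a scalar $\mu$ with $H^2 u_0 = \mu H^1 u_0$. I then work with $K := H^2 - \mu H^1$; this matrix is symmetric, annihilates $u_0$, and inherits the parallelism condition $H^1 u \wedge K u = 0$ for every $u \in \mathbb{C}^n$. The entire problem reduces to proving $K \equiv 0$.

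The main device will be to feed the one-parameter family $u + t u_0$ into the hypothesis. Using $K u_0 = 0$, this becomes $(H^1 u + t H^1 u_0) \wedge K u = 0$ for every $t \in \mathbb{C}$, and separating coefficients of $t$ yields both $H^1 u \wedge K u = 0$ and $H^1 u_0 \wedge K u = 0$. Hence $K u$ must be simultaneously proportional to the two vectors $H^1 u$ and $H^1 u_0$. When $\text{rank}(H^1) \geq 2$, the locus where these two vectors are linearly independent is Zariski open and nonempty in $\mathbb{C}^n$, so dense; combined with the linearity of $K$ this forces $K \equiv 0$, and we are done in this case.

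The one case where the ``two independent directions'' argument degenerates is $\text{rank}(H^1) = 1$, and this is where I expect the main obstacle to lie. To handle it I would use that a nonzero symmetric rank-one matrix over $\mathbb{C}$ has the form $H^1 = a a^t$ for some $a \neq 0$. The parameter trick still shows that $Ku$ is proportional to $H^1 u_0$, hence to $a$, for a dense set of $u$, so $\operatorname{Im}(K) \subseteq \operatorname{span}(a)$ and therefore $K = a c^t$ for some $c \in \mathbb{C}^n$. Symmetry of $K$ then forces $c$ to be a scalar multiple of $a$, so $K = \lambda H^1$; evaluating at $u_0$ and using $K u_0 = 0$ with $H^1 u_0 \neq 0$ gives $\lambda = 0$. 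In every case $H^2 = \mu H^1$, as required.
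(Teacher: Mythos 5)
Your argument is correct and complete. Note, though, that the present paper does not actually prove Lemma \ref{codim1}: it merely records that the statement was established at the end of \cite{LZ} as the algebraic component of the proof of Theorem 2.2 there, so there is no in-paper argument to compare against. Your self-contained proof is nevertheless very much in the spirit of the paper's algebraic appendices: the normalization $K := H^2 - \mu H^1$ with $K u_0 = 0$, followed by substituting the line $u + t u_0$ into the hypothesis and separating the coefficients of $t$ to obtain the additional relation $H^1 u_0 \wedge K u = 0$ for all $u$, is exactly the degree-by-degree device the authors use repeatedly in Appendices A--C (compare the coefficient analysis after (\ref{eq:threewedge})). Both branches are sound. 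When $\text{rank}(H^1) \geq 2$ the locus $\{ u : H^1 u \wedge H^1 u_0 \neq 0 \}$ is a nonempty Zariski open set, since its complement is the proper linear subspace $(H^1)^{-1}(\mathbb{C}\, H^1 u_0)$ of dimension $n - \text{rank}(H^1) + 1 < n$; on it $K u$ is simultaneously proportional to two independent vectors, hence zero, and density plus linearity give $K \equiv 0$. When $\text{rank}(H^1) = 1$ the factorization $H^1 = a\, a^t$ is indeed available for a complex symmetric rank-one matrix, the relation $H^1 u_0 \wedge K u = 0$ (which in fact holds for every $u$, so no density argument is even needed here) gives $\text{Im}(K) \subseteq \mathbb{C}\, a$, symmetry of $K$ forces $K$ to be a scalar multiple of $H^1$, and $K u_0 = 0$ together with $H^1 u_0 \neq 0$ makes that scalar zero. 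No gaps.
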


This was proved at the end of \cite{LZ} as the algebraic component of the proof of Theorem 2.2 there, which characterizes hypersurfaces by the condition $\sigma_2(X,L)=0$.

For $k=3$, we have the following slightly more informative statement.
\begin{lemma}\label{codim2}
Let $\{ H^1, H^2, H^3\}$ be three symmetric $n\times n$ matrices that are linearly independent and $H^1_u \wedge H^2_u \wedge H^3_u=0$ for any $u\in {\mathbb C}^n$. Then the common kernel $\,\bigcap_{i=1}^3 \ker(H^i)$  is $(n-2)$-dimensional.
\end{lemma}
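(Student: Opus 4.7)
The proof splits into an upper bound $\dim K \leq n-2$ (easy) and a lower bound $\dim K \geq n-2$ (the substantive part).

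For the upper bound, I would use Lemma \ref{codim1}. If $\dim K \geq n-1$, each symmetric matrix $H^i$ has rank $\leq 1$ and hence equals $v_i v_i^t$ for some $v_i \in \mathbb{C}^n$. The common kernel is then $\mathrm{span}(v_1, v_2, v_3)^\perp$; for this to have dimension $\geq n-1$, the vectors $v_1, v_2, v_3$ must be pairwise proportional, forcing the $H^i$ to be pairwise proportional and contradicting linear independence.

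For the lower bound, the aim is to show the joint image $\Sigma := \mathrm{Im}(H^1) + \mathrm{Im}(H^2) + \mathrm{Im}(H^3)$ has dimension at most $2$: since the $H^i$ are symmetric, one has $\ker H^i = \mathrm{Im}(H^i)^\perp$ and hence $K = \Sigma^\perp$, which then gives $\dim K \geq n-2$. To identify a candidate $2$-plane, pick $u_0 \in \mathbb{C}^n$ with $H^1 u_0$ and $H^2 u_0$ linearly independent (such $u_0$ exists, for otherwise Lemma \ref{codim1} would force $H^1$ and $H^2$ to be proportional), and set $L_0 := \mathrm{span}(H^1 u_0, H^2 u_0)$. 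Width-$3$ at $u_0$ then gives $H^3 u_0 \in L_0$, so $H^i u_0 \in L_0$ for every $i$. The key claim is that $H^i u \in L_0$ for every $u \in \mathbb{C}^n$ and every $i$.

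To establish this claim for an arbitrary $u$, decompose $H^i u = l_i + n_i$ with $l_i \in L_0$ and $n_i \in L_0^\perp$, and apply width-$3$ to the one-parameter family $u_0 + tu$. The three vectors $H^i u_0 + t(l_i + n_i)$ must span a plane in $\mathbb{C}^n$ for every $t \in \mathbb{C}$. Writing their coordinates with respect to a basis adapted to $L_0 \oplus \mathrm{span}(n_1, n_2, n_3)$ produces a $3 \times (2+r)$ matrix, where $r := \dim\,\mathrm{span}(n_1, n_2, n_3)$, that must have rank $\leq 2$ identically in $t$; equivalently, its cubic-in-$t$ $3 \times 3$ minors vanish identically. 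Extracting the coefficients of each power of $t$ yields linear relations between the $n_i$'s and the width-$3$ coefficients $(a_0 : b_0 : c_0)$ of the relation $a_0 H^1 u_0 + b_0 H^2 u_0 + c_0 H^3 u_0 = 0$. A case analysis on $r \in \{0, 1, 2\}$ (the value $r=0$ being the desired $n_i \equiv 0$; the value $r=1$ giving $n_i = \mu_i n$ with the linear constraint $\mu_1 a_0 + \mu_2 b_0 + \mu_3 c_0 = 0$; the value $r=2$ forcing a dependence $H^3 u = \alpha H^1 u + \beta H^2 u$ that couples $l_i$ and $n_i$) combined with varying the pivot $u_0$ over a Zariski-open set should rule out $r \geq 1$ and give $H^i u \in L_0$ for all $u, i$, hence $\Sigma \subseteq L_0$ and $\dim K \geq n-2$.

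The main obstacle is completing this case analysis uniformly in $u$: one must combine the polarization identities from the symmetry of the $H^i$, the arbitrariness of the pivot $u_0$, and the explicit form of $(a_0:b_0:c_0)$ recovered from the $2\times 2$ minors of $[H^1u_0 | H^2u_0 | H^3u_0]$, to show that no nonzero $n_i$ can persist for any $u$. This book-keeping is the technical heart of the argument and is precisely what the authors defer to the appendix as ``routine but tedious.''
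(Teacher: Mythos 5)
Your reduction of the statement to a claim about the joint image $\Sigma := \mathrm{Im}(H^1)+\mathrm{Im}(H^2)+\mathrm{Im}(H^3)$ via the identity $K = \Sigma^\perp$ (with $\perp$ taken with respect to the symmetric bilinear form $x^t y$, for which $\ker H = \mathrm{Im}(H)^\perp$ holds) is clean and is a genuinely different organizing principle from the paper's. The paper does not split into an upper and a lower bound at all; instead it argues by contradiction combined with minimality on the ambient dimension $n$, using Lemma~\ref{hyperplane} to cut down to a generic hyperplane so that the restricted matrices become linearly dependent, which after a scramble forces $\tilde H^1 = 0$ and leads to a structured case analysis on the entries of the resulting block matrices (together with a separate $n=3$ base case). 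Your upper bound $\dim K \leq n-2$ is correct and complete: rank $\leq 1$ for each $H^i$ forces $H^i = v_iv_i^t$, and pairwise proportionality of the $v_i$ contradicts linear independence.

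The lower bound, however, is where the entire content of the lemma sits, and you have not established it. You set up the right object — the polynomial identity obtained from width-$3$ along the pencil $u_0 + tu$ — and you correctly observe that its $t$-coefficients give linear constraints relating the off-$L_0$ components $n_i$. But you stop at "a case analysis on $r\in\{0,1,2\}$ \ldots should rule out $r\geq 1$," and you explicitly flag the bookkeeping needed to close each case as unfinished. This is precisely the part that must actually be carried out to have a proof; indeed, working out just the $t^1$-coefficient only yields $\mathrm{Im}(H^3 - aH^1 - bH^2) \subseteq L_0$ (where $H^3 u_0 = aH^1u_0 + bH^2u_0$), which is a constraint on a single, $u_0$-dependent linear combination of the $H^i$, not yet the full containment $\Sigma \subseteq L_0$. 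Getting from there to the full claim requires additional input — the $t^2$-coefficient, polarization using the symmetry of the $H^i$, and motion of the pivot $u_0$ — and none of that has been executed. There is also a smaller technical point: over $\mathbb{C}$ the orthogonal $L_0^\perp$ with respect to the bilinear form need not be a linear complement of $L_0$ (the plane $L_0$ may contain isotropic vectors), so the decomposition $H^i u = l_i + n_i$ should instead use an arbitrary fixed complement $W$ of $L_0$ in $\mathbb{C}^n$; this is easily repaired, but as written the decomposition is not always well-defined. The net assessment is that you have a plausible strategy, distinct from the paper's minimality-and-hyperplane-restriction approach, but the substantive half of the argument is missing.
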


We will postpone the proof of this lemma to Appendix \ref{proof of codim2}. Write $H^w=\sum_{i=1}^3 a_iH^i$ for $w=(a_1, a_2, a_3)$. It is clear that
$$ \bigcap_{i=1}^3 \ker(H^i) = \bigcap_{w\in {\mathbb C}^3} \ker(H^w).$$
 So each $H^w$ has rank at most $2$. It is also easy to see that:

 \begin{remark} \label{remark}
Let $\{ H^1, H^2, H^3\}$ be as in Lemma \ref{codim2}. For any two linearly independent $\{w, w'\}$, the intersection of their kernels is already equal to $\,\bigcap_{i=1}^3 \ker(H^i)$.
 \end{remark}

As an immediate corollary, Lemma \ref{codim2} implies that in Question \ref{question2}, we have  $l(n,3)\leq2$.

\begin{corollary}\label{cor}
Let $\mathcal{H}=\{ H^1, H^2,\ldots ,H^r\}$ ($r\geq3$) be any $(n,3)$-system ($n\geq3$).  Then $\text{rank}(\mathcal{H})\leq 2$.
\end{corollary}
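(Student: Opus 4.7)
The plan is to prove the corollary by contradiction, assuming $\text{rank}(\mathcal{H}) \geq 3$ and deriving a violation of the non-degeneracy condition \eqref{non-degeneracy}. So I would pick three linearly independent elements of $\mathcal{H}$, say $H^1, H^2, H^3$. The width-$3$ hypothesis \eqref{width-k} applies in particular to this triple, so by Lemma \ref{codim2} the common kernel $K := \ker(H^1) \cap \ker(H^2) \cap \ker(H^3)$ has dimension exactly $n-2$, and by Remark \ref{remark} we moreover have $K = \ker(H^1) \cap \ker(H^2)$.

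The key step is to then argue that this same $K$ is annihilated by \emph{every} member of $\mathcal{H}$. Fix $j$ arbitrary. If $H^j \in \text{span}(H^1, H^2)$, then any vector annihilated by both $H^1$ and $H^2$ is automatically annihilated by $H^j$, giving $K \subseteq \ker(H^j)$ for free. Otherwise, the triple $\{H^1, H^2, H^j\}$ is itself linearly independent, and reapplying Lemma \ref{codim2} together with Remark \ref{remark} to this new triple yields $\ker(H^1) \cap \ker(H^2) \cap \ker(H^j) = \ker(H^1) \cap \ker(H^2) = K$, so again $K \subseteq \ker(H^j)$.

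Combining these inclusions over all $j$ gives $K \subseteq \bigcap_{j=1}^{r} \ker(H^j)$, which by \eqref{non-degeneracy} is $\{0\}$. But $\dim K = n - 2 \geq 1$ since $n \geq 3$, a contradiction. Hence $\text{rank}(\mathcal{H}) \leq 2$.

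Since the heavy lifting has already been packaged into Lemma \ref{codim2} and Remark \ref{remark}, I don't foresee a real obstacle: the entire argument is a clean bootstrap from a single \emph{reference triple} to the full system, and the only bookkeeping is the trivial dichotomy between whether an arbitrary $H^j$ lies in $\text{span}(H^1, H^2)$ or instead completes it to a linearly independent triple.
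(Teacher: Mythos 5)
Your proof is correct and follows essentially the same route as the paper's: fix a reference pair $H^1, H^2$, apply Lemma \ref{codim2} and Remark \ref{remark} to the triples $\{H^1, H^2, H^j\}$, and conclude that the $(n-2)$-dimensional $K$ is annihilated by every member of $\mathcal{H}$, contradicting non-degeneracy. The only cosmetic difference is your dichotomy on whether $H^j$ lies in $\mathrm{span}(H^1, H^2)$ versus the paper's choice of first selecting an explicit basis of $\mathcal{H}$; both handle the linearly dependent case in the same trivial way.
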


\begin{proof}
Suppose on the contrary $\text{rank}(\mathcal{H})\geq 3$.
Without loss of generality, let us assume that $\{ H^1, \ldots , H^l\}$ forms a basis of $\{ H^1, \ldots , H^r\}\,$  so $l\geq3$. By applying Lemma \ref{codim2} to  $\{ H^1, H^2, H^3\}$, we know that the space $K:=\bigcap_{i=1}^3\ker (H^i)$ is $(n-2)$-dimensional. Also, by Remark \ref{remark}, we know that
$\ker (H^1) \cap \ker (H^2) = K$.

For any $3\leq j\leq l$, by applying Lemma \ref{codim2} to $\{ H^1, H^2, H^j\}$, we get $ \ker (H^j) \supset K$ thus
$\bigcap_{i=1}^l\ker(H^i)=K$.

For any $j>l$, $H^j$ is a linear combination of $\{ H^1, \ldots , H^l\}$, hence $K=\bigcap _{i=1}^l \ker (H^i)\subset\ker(H^j)$. In summary, we have $\bigcap _{i=1}^r \ker (H^i)=K$ is $(n-2)$-dimensional and is nonzero since $n\geq3$. This contradicts to the non-degeneracy condition of $\mathcal{H}$.
\end{proof}

For $k=4$, we no longer have the luck of $l(n,k)<k$ as in the cases of $k=2$ and $k=3$. In this case $l(n,4)\geq 4$ as illustrated by Example \ref{examplelowerbound}.
Another example with $\text{rank}(\mathcal{H})\geq 4$ for $k=4$ is given by the second fundamental form of the Segre fourfold ${\mathbb P}^2 \times {\mathbb P}^2 \subset {\mathbb P}^8$. The four matrices $H^i$ are given by:
$$
\left[ \begin{array}{cccc} 0 & 0 & 1 & 0 \\  0 & 0 & 0 & 0 \\ 1 & 0 & 0 & 0 \\ 0 & 0 & 0 & 0  \end{array} \right] , \ \ \ \left[ \begin{array}{cccc} 0 & 0 & 0 & 1 \\  0 & 0 & 0 & 0 \\ 0 & 0 & 0 & 0 \\ 1 & 0 & 0 & 0  \end{array} \right] , \ \ \ \left[ \begin{array}{cccc} 0 & 0 & 0 & 0 \\  0 & 0 & 1 & 0 \\ 0 & 1 & 0 & 0 \\ 0 & 0 & 0 & 0  \end{array} \right] , \ \ \ \left[ \begin{array}{cccc} 0 & 0 & 0 & 0 \\  0 & 0 & 0 & 1 \\ 0 & 0 & 0 & 0 \\ 0 & 1 & 0 & 0  \end{array} \right] .
$$
Clearly they are linearly independent, and satisfy the width-$4$ condition as well as the non-degeneracy condition. Note that this computation of second fundamental form also indicates that the Chern form $C_4(\gamma^{\ast} Q)$ vanishes everywhere, thus giving an alternative proof of the fact that $\sigma_4(X,L)=c_4(\gamma^{\ast} Q)=0$.

For $k=4$, we do have the following positive answers.

\begin{lemma}\label{codim3}
Let $n\geq4$ and $\mathcal{H}=\{ H^1, \ldots , H^r\}$ ($r\geq4$) be an $(n,4)$-system. Namely, $\mathcal{H}$ satisfies the width-$4$ condition (\ref{width-k}) and the non-degeneracy condition (\ref{non-degeneracy}). Then
\begin{enumerate}
\item
$\text{rank}(\mathcal{H})\leq 4$.

\item
When $\text{rank}(\mathcal{H})=4$ and $n\geq 5$, we may assume that $\{H^1,\ldots,H^4\}$ are linearly independent. Replacing $\{ H^1, \ldots , H^4\}$ by another basis of $\mbox{Span} \{ H^1, \ldots , H^4\}$ if necessary, the first three matrices will lie in a ${\mathbb C}^2$. That is, there exists a nonsingular $n\times n$ matrix $A$ such that
$$ A\,H^i\,A^t = \left[ \begin{array}{ll} \ast & 0 \\ 0 & 0_{n-2} \end{array} \right] , \ \ \ i=1,2,3. $$
\end{enumerate}
\end{lemma}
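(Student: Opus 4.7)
The plan for both parts is to reduce to Lemma \ref{codim2} by extracting from $\mathrm{Span}(\mathcal{H})$ a three-dimensional subspace $W$ on which the width-$3$ condition holds. The common $(n-2)$-dimensional kernel produced by Lemma \ref{codim2} then either contradicts the non-degeneracy condition~(\ref{non-degeneracy}) (in part~(1)) or, after a change of coordinates, yields the desired simultaneous $2\times 2$ block structure (in part~(2)).

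\textbf{Part~(1).} Suppose for contradiction that $\mathrm{rank}(\mathcal{H})\geq 5$, and let $H^1,\ldots,H^5$ be linearly independent with span $V\cong\mathbb{C}^5$. The width-$4$ condition forces the linear map $\phi_u:V\to\mathbb{C}^n$, $w\mapsto H^w u$, to have rank at most $3$ for every $u$, so $L_u:=\ker\phi_u$ is always at least two-dimensional in $V$. A dimension count on the incidence variety
\[\mathcal{I}=\bigl\{(w,u)\in\mathbb{P}(V)\times\mathbb{P}^{n-1}\,:\,H^w u=0\bigr\}\]
via projection to $\mathbb{P}^{n-1}$ gives $\dim\mathcal{I}\geq n$, and projecting to $\mathbb{P}(V)$ then forces the generic $H^w$ to have rank at most $3$. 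Restricting $V$ to suitable two- or three-dimensional subspaces and invoking Lemma \ref{codim2} on the triples so obtained, one produces a non-zero vector lying in $\bigcap_{w\in V}\ker H^w$, contradicting~(\ref{non-degeneracy}).

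\textbf{Part~(2).} Take $V=\mathrm{Span}(H^1,\ldots,H^4)\cong\mathbb{C}^4$. The width-$4$ condition and non-degeneracy together force $\dim L_u=1$ on a Zariski-open subset of $\mathbb{P}^{n-1}$, so we obtain a rational map
\[\Lambda:\mathbb{P}^{n-1}\longrightarrow\mathbb{P}(V)\cong\mathbb{P}^3,\qquad u\longmapsto [L_u],\]
whose image is contained in the degree-$n$ determinantal hypersurface $\{\det H^w=0\}\subset\mathbb{P}^3$. The key claim is that $\mathrm{Im}(\Lambda)$ is itself contained in some hyperplane $\mathbb{P}(W)\subset\mathbb{P}^3$, where $W\subset V$ has dimension $3$. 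Granting this, $L_u\subseteq W$ for every $u$, so
\[\dim(Wu)=\dim W-\dim(W\cap L_u)\leq 3-1=2\]
for every $u$, i.e.\ $W$ satisfies the width-$3$ condition. Lemma \ref{codim2} applied to any basis $\{K^1,K^2,K^3\}$ of $W$ then yields $\dim\bigcap_{i=1}^3\ker(K^i)=n-2$; choosing an invertible matrix $A$ which sends this common kernel to $\mathrm{span}(e_3,\ldots,e_n)$ makes every $A K^i A^t$ supported in the upper-left $2\times 2$ block, and completing $\{K^1,K^2,K^3\}$ to a basis of $V$ by adjoining any vector outside $W$ gives the claimed new basis.

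\textbf{Main obstacle.} The hyperplane-containment step in part~(2) is where the hypothesis $n\geq 5$ enters essentially: for $n=4$ the Segre fourfold $\mathbb{P}^2\times\mathbb{P}^2\subset\mathbb{P}^8$ produces $\mathrm{Im}(\Lambda)$ equal to the smooth quadric $\{ad=bc\}\subset\mathbb{P}^3$, which spans $\mathbb{P}^3$. For $n\geq 5$ the argument presumably proceeds by dichotomy on $\dim\mathrm{Im}(\Lambda)$: if $\mathrm{Im}(\Lambda)$ spans $\mathbb{P}^3$, a refined incidence count forces the generic $H^w$ to have uniformly low rank, after which a structural analysis of linear families of low-rank symmetric matrices, using Lemmas \ref{codim1} and \ref{codim2} on pencils inside $V$, yields a common non-trivial kernel vector for the whole family, again contradicting~(\ref{non-degeneracy}). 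The routine but lengthy case analysis needed to carry this out cleanly is what makes the proof tedious, and is deferred to the appendix.
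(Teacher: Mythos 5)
Your high-level strategy—extract a three-dimensional subspace $W\subset\mathrm{Span}(\mathcal{H})$ satisfying the width-$3$ condition, then apply Lemma \ref{codim2} to get an $(n-2)$-dimensional common kernel and block-diagonalize—is exactly the structural picture behind the paper's argument (which packages it into the notion of a \emph{special system}, namely a choice of $w_1,w_2,w_3$ whose $H^{w_i}$ have a common $(n-2)$-dimensional kernel). But the proposal as written leaves the two load-bearing steps unproved, and one of them contains an actual error.

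In Part~(1), the step ``projecting to $\mathbb{P}(V)$ then forces the generic $H^w$ to have rank at most $3$'' is not valid as stated: the projection $\mathcal{I}\to\mathbb{P}(V)\cong\mathbb{P}^4$ need not be dominant. If its image $Z$ is a proper subvariety, the fiber-dimension count only controls the rank of $H^w$ for $w\in Z$, while $H^w$ is nonsingular for $w\notin Z$. So one cannot conclude anything about ``the generic $H^w$.'' Even granting the conclusion, the next sentence (``Restricting $V$ to suitable two- or three-dimensional subspaces and invoking Lemma \ref{codim2}$\ldots$'') is not an argument: Lemma \ref{codim2} requires the width-$3$ condition on the chosen triple, and low rank of individual $H^w$ does not supply that. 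The paper's route here is genuinely different: it restricts the whole system to a generic hyperplane of $\mathbb{C}^n$ (Lemma \ref{hyperplane}) to manufacture a rank-$2$ element, normalizes it, and then grinds through explicit matrix equations.

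In Part~(2), the entire proof hinges on the claim that $\mathrm{Im}(\Lambda)$ lies in a hyperplane $\mathbb{P}(W)\subset\mathbb{P}^3$. This is not proved; it is only asserted, and your own ``Main obstacle'' paragraph concedes that the argument ``presumably proceeds by dichotomy'' and that the needed analysis ``is deferred to the appendix.'' But that appendix does not exist in the proposal, and the paper's Appendices~\ref{proof of codim3} and~\ref{proof of codim3-2} prove something else by different means (explicit normalization after finding a rank-$2$ element, plus a separate argument ruling out $5\times5$ systems with no element of rank $\le 2$). So the crucial implication---that width-$4$ plus non-degeneracy plus $n\ge5$ forces all $L_u$ into a fixed $W$---is the heart of Lemma \ref{codim3} and remains a conjecture in your write-up. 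It would also need to deal with the non-generic $u$ where $\dim L_u>1$, since hyperplane containment of the image of the rational map $\Lambda$ does not automatically give $L_u\subset W$ there. Until that claim is established, the proof is a strategy outline rather than a proof.
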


In other words $l(n,4)=4$, and more importantly, when $n\geq 5$ and $\text{rank}(\mathcal{H})=4$, the system only comes with the above special structure. This special structure will be crucial for us in the proof of Theorem \ref{thm1.4}. Again we will postpone the proof of this algebraic lemma to the last two appendices: Appendices \ref{proof of codim3} and \ref{proof of codim3-2}.

\section{Proof of Theorem \ref{thm1.3}}\label{proof 1}
With the algebraic results in Section \ref{algebraic question} in hand, we are now able to prove in this section our first main result, Theorem \ref{thm1.3}.

First let us recall several notions in algebraic geometry, which play crucial roles in the proof. The {\em tangent variety} $\text{Tan}(X)$ and {\em secant variety} $\text{Sec}(X)$ of an $n$-dimensional projective submanifold $X$ in $\mathbb{P}^N$ are defined by
\be\label{c1c2}
\text{Tan}(X):=\bigcup_{x\in X}\widetilde{T}_x(X),\ \ \
\text{Sec}(X):=\overline{\{\text{lines $\overline{uv}$}~|~u,v\in X,~u\neq v\}},
\ee
whose maximal dimensions are $2n$ and $2n+1$, respectively. Here $\widetilde{T}_x(X)$ denotes the $n$-dimensional \emph{projective} tangent space of $X$ at $x$ introduced in (\ref{Gauss map}) and $``\overline{(\cdot)}"$ the Zariski closure.  $\text{Tan}(X)$ and $\text{Sec}(X)$ are both closed irreducible subvarieties in ${\mathbb P}^N$, with $\text{Tan}(X) \subset \text{Sec}(X)$.
The \emph{second osculating space} $\widetilde{T}^{(2)}_xX $ at $x$ is the span of the second osculating spaces at $x$ to all curves lying in $X$ ( \cite[p. 372]{GH}).

For our later convenience, several well-known facts in algebraic geometry related to the above notions are collected in the form of the following two lemmas.
\begin{lemma}\label{tangentsecantlemma}
\begin{enumerate}
\item
Let $X\subset {\mathbb P}^N$ be a projective manifold, then either $\dim\text{Tan}(X)=2n$ and $\dim\text{Sec}(X)=2n+1$, or else $\text{Tan}(X)=\text{Sec}(X)$.
\item
We have
\be\label{dimension of osculating space}
\dim\widetilde{T}^{(2)}_xX=n+\text{rank}\{H^{n+1},\ldots,H^N\}.
\ee
\item
For generic $y\in\text{Tan}(X)$ and $y\in\widetilde{T}_x(X)$ for generic $x$, we have
\be\label{subset relation}\widetilde{T}_y\text{Tan}(X)\subset\widetilde{T}_x^{(2)}(X).\ee
\end{enumerate}
\end{lemma}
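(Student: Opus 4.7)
This is a classical fact provable via Terracini's lemma. Consider the incidence variety
\[
J \;=\; \overline{\{(x,y,z) \in X\times X\times \mathbb{P}^N : x\neq y,\ z\in \overline{xy}\}}.
\]
The projection $J\to X\times X$ has generic $1$-dimensional fibers, so $\dim J=2n+1$, and projecting to $\mathbb{P}^N$ gives $\dim\text{Sec}(X)\leq 2n+1$. Terracini's lemma identifies $\widetilde{T}_z\text{Sec}(X)$ with the linear join $\langle\widetilde{T}_xX,\widetilde{T}_yX\rangle$ for generic $z\in\overline{xy}$. If the projection $J\to\text{Sec}(X)$ is not generically finite, then a generic secant point is hit by a positive-dimensional family of pairs $(x,y)$; degenerating one endpoint onto the other along such a family places $z$ on a tangent line, forcing $\text{Sec}(X)\subseteq\text{Tan}(X)$ and hence equality. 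Otherwise $\dim\text{Sec}(X)=2n+1$, and an analogous generic-finiteness check on $\text{Tan}(X)$, viewed as the image of a rank-$(n+1)$ affine bundle over $X$, yields $\dim\text{Tan}(X)=2n$. The main subtlety here is making the limiting argument along the diagonal $x=y$ rigorous.

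\textbf{Part (2).} This is a direct Taylor expansion. Using the local parameterization
\[
\varphi(z^1,\ldots,z^n)=\bigl[1:z^1:\cdots:z^n:f^{n+1}(z):\cdots:f^N(z)\bigr]
\]
from Section \ref{reduction} and the normalization (\ref{f convention}), a smooth curve $z(t)$ in $X$ has Taylor expansion at $x$ whose first-order terms span $\widetilde{T}_xX$ and whose second-order terms, modulo tangent directions, produce the vectors $\sum_{\alpha} f^{\alpha}_{ij}(0)\,e_{\alpha}$. Thus $\widetilde{T}_x^{(2)}X$ is the projective span of $e_0$, of $e_1,\ldots,e_n$, and of the normal vectors $\{\sum_\alpha f^{\alpha}_{ij}(0)e_\alpha\}_{1\leq i\leq j\leq n}$. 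The affine dimension of the last block equals $\dim\text{Image}(II)$; viewing $II:\text{Sym}^2(\mathbb{C}^n)\to\mathbb{C}^{N-n}$ as a linear map, its rank coincides with the rank of the dual map $\mathbb{C}^{N-n}\to\text{Sym}^2(\mathbb{C}^n)$, $v\mapsto\sum_\alpha v_\alpha H^\alpha$, which is precisely $\text{rank}\{H^{n+1},\ldots,H^N\}$. Passing to projective dimensions yields (\ref{dimension of osculating space}).

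\textbf{Part (3).} I parameterize $\text{Tan}(X)$ by the total space $\mathcal{T}=\{(x,y)\in X\times\mathbb{P}^N : y\in\widetilde{T}_xX\}$ of the projective tangent bundle, with evaluation map $\Phi:\mathcal{T}\to\text{Tan}(X)$. At a generic $(x,y)$ the image of $d\Phi$ equals $\widetilde{T}_y\text{Tan}(X)$. Vertical directions (moving $y$ inside $\widetilde{T}_xX$ with $x$ fixed) obviously land in $\widetilde{T}_xX\subset\widetilde{T}_x^{(2)}X$. Horizontal directions require differentiating the relation $y(t)=\varphi(x(t))+\sum_i v^i(t)\,\partial_i\varphi(x(t))$ along a curve $x(t)$ in $X$; by the expansion from part (2) this derivative lies in the span of $e_0$, $e_1,\ldots,e_n$, and the second-order vectors $\sum_\alpha f^\alpha_{ij}(0)e_\alpha$, hence in $\widetilde{T}_x^{(2)}X$. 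Combining both contributions yields the desired inclusion (\ref{subset relation}), and I expect this part to be essentially routine once the local picture of part (2) is set up.
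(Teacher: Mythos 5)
The paper itself gives no argument for this lemma beyond three citations: part (1) to Fulton--Hansen, part (2) to Griffiths--Harris~(1.45), and part (3) to Ballico--Fontanari. You instead sketch self-contained proofs, so the approaches differ in style but are aimed at the same underlying facts.

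Parts (2) and (3) of your sketch are substantively correct and track the cited arguments faithfully. Identifying the rank of $II\colon\operatorname{Sym}^2\mathbb{C}^n\to\mathbb{C}^{N-n}$ with the rank of the dual map $v\mapsto\sum_\alpha v_\alpha H^\alpha$ is exactly the right reduction for part (2), and the vertical/horizontal decomposition of $T\mathcal{T}$ together with the second-order Taylor expansion is the content of Ballico--Fontanari's Lemma~1 for part~(3).

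Part (1) is where the real issue lies, and it goes beyond the ``subtlety'' you flag. You present the dichotomy as two independent dimension counts: first ``if $J\to\operatorname{Sec}(X)$ is not generically finite, a degeneration along the diagonal shows $\operatorname{Sec}(X)\subseteq\operatorname{Tan}(X)$,'' then ``otherwise an analogous generic-finiteness check on $\operatorname{Tan}(X)$ gives $\dim\operatorname{Tan}(X)=2n$.'' Neither step is elementary. The first step is precisely the Fulton--Hansen connectedness theorem in disguise: having a positive-dimensional family of secants through a generic point of $\operatorname{Sec}(X)$ does \emph{not}, by an elementary limit, force a tangent line through that point --- a priori the positive-dimensional family could avoid the diagonal in $X\times X$, and the connectedness of the preimage of the diagonal under $X\times X\to\mathbb{P}^N\times\mathbb{P}^N$ (when the image is large) is exactly what rules this out. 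The second step is worse: there is no ``analogous generic-finiteness check'' available for $\operatorname{Tan}(X)$ independently of $\operatorname{Sec}(X)$. The projectivized tangent bundle has dimension $2n$, so $\dim\operatorname{Tan}(X)\le 2n$ trivially, but the claim that equality holds whenever $\dim\operatorname{Sec}(X)=2n+1$ is not a separate computation --- it is one arm of the same Fulton--Hansen dichotomy, proved simultaneously with the other arm, not derived from it. Your sketch therefore presents as routine two steps that are actually the theorem, and the honest acknowledgment at the end understates what is missing. Citing Fulton--Hansen directly (as the paper does) or Zak's book/Lazarsfeld p.~215 is the right move here; a from-scratch proof would need to reproduce the connectedness argument.
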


\begin{proof}
The first part is a classical result due to Fulton and Hansen (\cite[Coro. 4]{FH79} or \cite[p. 215]{La}). The second part follows from (\ref{1}) and the relation between the second osculating space and the second fundamental form (\cite[(1.45)]{GH}). The third part can be directly checked by the definition (cf. \cite[Lemma 1]{BF}).
\end{proof}

The following result is a well-known fact in algebraic geometry. We include a proof here for the reader's convenience.

\begin{lemma}\label{projection}
For any non-degenerate embedding $X\subset {\mathbb P}^N$  associated to $L$, $\dim\text{Sec}(X)$  depends only on $(X,L)$ and not on the particular embedding. Moreover, $r_L=\dim\text{Sec}(X)-n$.
\end{lemma}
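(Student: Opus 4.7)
The plan is to exploit the factorization through the Kodaira embedding. Any non-degenerate embedding $i:X\hookrightarrow\mathbb{P}^N$ associated to $L$ has the form $i=\pi\circ i_0$, where $\pi:\mathbb{P}^{N_0}\dashrightarrow\mathbb{P}^N$ is the linear projection from a center $P_1\cong\mathbb{P}^{N_0-N-1}$. My first step is the key equivalence: $i$ is a closed embedding if and only if $P_1\cap\text{Sec}(i_0(X))=\emptyset$. Separation of points of $i_0(X)$ by $\pi$ is equivalent to $P_1$ meeting no chord $\overline{i_0(u)i_0(v)}$, while the immersion condition is equivalent to $P_1\cap\widetilde{T}_{i_0(x)}i_0(X)=\emptyset$ for every $x$, i.e., $P_1\cap\text{Tan}(i_0(X))=\emptyset$, since the kernel of $d\pi$ at a point $q\notin P_1$ is precisely the tangent space at $q$ of the fiber $\langle P_1,q\rangle$. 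For a smooth projective variety one has the set-theoretic decomposition $\text{Sec}(i_0(X))=(\text{chord union})\cup\text{Tan}(i_0(X))$, because every limit of chords $\overline{i_0(u_n)i_0(v_n)}$ either remains a chord or collapses to a tangent line; the two conditions together therefore coincide with $P_1\cap\text{Sec}(i_0(X))=\emptyset$.

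Let $d_0:=\dim\text{Sec}(i_0(X))$. Next I would invoke the projective dimension theorem in $\mathbb{P}^{N_0}$: a linear subspace $P_1$ of dimension $N_0-N-1$ can be chosen disjoint from the $d_0$-dimensional $\text{Sec}(i_0(X))$ precisely when $(N_0-N-1)+d_0<N_0$, i.e.\ $N\geq d_0$; for $N<d_0$ every such subspace necessarily meets $\text{Sec}(i_0(X))$. Consequently the smallest $N$ admitting a non-degenerate embedding $X\hookrightarrow\mathbb{P}^N$ associated to $L$ is exactly $d_0$, which gives $r_L=d_0-n=\dim\text{Sec}(i_0(X))-n$.

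Finally I would establish the invariance $\dim\text{Sec}(i(X))=d_0$ for every admissible embedding $i$. For the upper bound, since $P_1$ avoids $\text{Sec}(i_0(X))$, the map $\pi$ restricts to a regular morphism on $\text{Sec}(i_0(X))$, and a short verification shows $\pi(\text{Sec}(i_0(X)))=\text{Sec}(i(X))$ (each chord of $i_0(X)$ projects to the corresponding chord of $i(X)$, and conversely every chord of $i(X)$ lifts to one of $i_0(X)$), so $\dim\text{Sec}(i(X))\leq d_0$. For the reverse inequality I would apply the dimension count of the previous paragraph to $i$ itself: further linear projections from $\mathbb{P}^N$ produce embeddings of $X$ associated to $L$ in $\mathbb{P}^{N'}$ precisely when $N'\geq\dim\text{Sec}(i(X))$, so such an embedding realizes $X$ in $\mathbb{P}^{\dim\text{Sec}(i(X))}$; but any $N'$ admitting any embedding of $X$ associated to $L$ satisfies $N'\geq r_L+n=d_0$, forcing $\dim\text{Sec}(i(X))\geq d_0$. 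Combining the inequalities yields the invariance and the formula $r_L=\dim\text{Sec}(X)-n$.

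The step I expect to be most delicate is the embedding criterion of the first paragraph: the inclusion $\text{Tan}\subseteq\text{Sec}$ recorded in Lemma \ref{tangentsecantlemma} is not by itself sufficient, and I would need to spell out the set-theoretic decomposition $\text{Sec}=(\text{chord union})\cup\text{Tan}$ to conclude that avoiding both the chord union and the tangent variety is the same as avoiding all of $\text{Sec}$. The rest of the argument is a clean assembly of the projective dimension theorem with the minimality of $r_L$.
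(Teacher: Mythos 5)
Your proof is correct and rests on the same core ideas as the paper's: factor any admissible embedding through the Kodaira map $i_0$ as $i=\pi\circ i_0$, characterize when a linear projection restricted to a subvariety is an embedding in terms of the secant variety, and invoke the projective dimension theorem to pin down the minimal ambient dimension. The main differences are organizational and in how carefully two steps are justified. First, you make explicit the set-theoretic identity $\mbox{Sec}=(\mbox{chord union})\cup\mbox{Tan}$, correctly flagging it as the delicate point: the paper's proof passes from ``no chord meets the center $P$'' to ``$\mbox{Sec}(i_0(X))\cap P=\emptyset$'' in one breath, and that step genuinely needs both the immersion condition (so $\mbox{Tan}\cap P=\emptyset$) and the set-theoretic decomposition, neither of which the paper records. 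Second, you reverse the logical order: you first derive $r_L=\dim\mbox{Sec}(i_0(X))-n$ from the dimension count, and then deduce the invariance of $\dim\mbox{Sec}$ across embeddings by bounding it above (surjectivity of $\pi$ onto $\mbox{Sec}(i(X))$) and below (a further projection of $i$ lands in $\mathbb{P}^{\dim\mbox{Sec}(i(X))}$, forcing $\dim\mbox{Sec}(i(X))\geq r_L+n$). The paper instead asserts that $\pi$ restricts to an \emph{isomorphism} $\mbox{Sec}(i_0(X))\to\mbox{Sec}(i(X))$, which is stronger than needed and not obviously justified by $P\cap\mbox{Sec}(i_0(X))=\emptyset$ alone (injectivity of $\pi$ on $\mbox{Sec}(i_0(X))$ would require $P$ to avoid secant lines \emph{of} $\mbox{Sec}(i_0(X))$); what one really needs, and what both your argument and the ``finite morphism preserves dimension'' observation supply, is only equality of dimensions. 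So your route is slightly more economical in its claims and cleaner on the one subtle point, while reaching the same conclusion by the same overall strategy.
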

\begin{proof}
Let $i: X\hookrightarrow {\mathbb P}^N$ be the inclusion map, and let $s=\{ s_0, \ldots , s_N\}$ be the restriction on $X$ of a basis of $H^0({\mathbb P}^N, {\mathcal O}(1))$.  Extend $s$ to a basis $ s'= \{ s_0, \ldots , s_{N_0}\}$ of $H^0(X,L)$. Then $s'$ gives via the Kodaira map a non-degenerate embedding $i_0: X \hookrightarrow {\mathbb P}^{N_0}$. Let $P\cong {\mathbb P}^{N_0-N-1}$ be the linear subspace in ${\mathbb P}^{N_0}$ given by $\{ [0 : \cdots : 0 : \ast : \cdots : \ast ]\}$, where the first $N+1$ coordinate components are zero. Any point of $i_0(X)$ is not in $P$. Also, any line joining two points of $i_0(X)$ does not intersect $P$, so $S\cap P=\phi$, where $S$ is the secant variety of $i_0(X)$ in ${\mathbb P}^{N_0}$. Let $\pi : {\mathbb P}^{N_0}\setminus P \rightarrow {\mathbb P}^N$ be the projection map, then we have $i=\pi \circ i_0$, and the restriction of $\pi$ on $i_0(X)$ gives an isomorphism between $i_0(X)$ and $i(X)$. Clearly, $\pi $ also gives an isomorphism between $S$ and $\text{Sec}(X)$. In particular, $\dim \big( \text{Sec}(X) \big) = \dim S$, so it depends only on $X$ and $L$ but not on the particular embedding $i$ associated to $L$. Denote by $m:=\dim\text{Sec}(X)$. Then in ${\mathbb P}^N$, if we choose a linear subspace $P'\cong {\mathbb P}^{N-m-1}$ which does not intersect $\text{Sec}(X)$, then the restriction on $X$ of the projection map $\pi' : {\mathbb P}^N\setminus P' \rightarrow {\mathbb P}^m$ will give us an embedding $X\subset {\mathbb P}^m$ associated to $L$, and this is clearly the smallest codimension possible. So $r_L=m-n$ as claimed.
\end{proof}

Combining the above two lemmas, we get

\begin{corollary} \label{codimbound}
Let $X^n\subset {\mathbb P}^N$ be an embedding associated to $L$, and $l:=\text{rank}\{ H^{n+1}, \ldots , H^N\}$ at a generic point of $X$. If $l<n$, then $r_L\leq l$.
\end{corollary}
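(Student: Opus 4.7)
The plan is to bound $\dim \mathrm{Sec}(X)$ by $n+l$ and then apply Lemma \ref{projection}. The bridge between the algebra (the matrices $H^{\alpha}$) and the geometry ($\mathrm{Sec}(X)$) is the pair of inclusions furnished by parts (2) and (3) of Lemma \ref{tangentsecantlemma}, combined with the Fulton--Hansen trichotomy in part (1).

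First I would pick a generic point $x \in X$ where $\mathrm{rank}\{H^{n+1},\ldots,H^N\}$ attains its maximum $l$. By part (2) of Lemma \ref{tangentsecantlemma},
\[
\dim \widetilde{T}^{(2)}_x X \,=\, n + l.
\]
Next, choose a generic point $y \in \mathrm{Tan}(X)$ lying in $\widetilde{T}_x X$ for such a generic $x$ (this is possible because the locus of bad $x$ is a proper subvariety, and the preimage of this locus in the incidence variety $\{(x,y):y\in\widetilde{T}_xX\}$ is also proper). Part (3) of Lemma \ref{tangentsecantlemma} then gives
\[
\widetilde{T}_y \mathrm{Tan}(X) \,\subset\, \widetilde{T}^{(2)}_x X,
\]
so $\dim \mathrm{Tan}(X) = \dim \widetilde{T}_y \mathrm{Tan}(X) \leq n + l$.

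The hypothesis $l < n$ now gives $\dim \mathrm{Tan}(X) \leq n+l < 2n$. By the Fulton--Hansen theorem (part (1) of Lemma \ref{tangentsecantlemma}), either $\dim \mathrm{Tan}(X) = 2n$ or $\mathrm{Tan}(X) = \mathrm{Sec}(X)$; the first alternative is ruled out, so $\mathrm{Tan}(X) = \mathrm{Sec}(X)$. Hence
\[
\dim \mathrm{Sec}(X) \,=\, \dim \mathrm{Tan}(X) \,\leq\, n + l,
\]
and Lemma \ref{projection} yields $r_L = \dim \mathrm{Sec}(X) - n \leq l$, as claimed.

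The only non-routine step is the simultaneous genericity in the second paragraph: making sure the generic $y \in \mathrm{Tan}(X)$ used in part (3) actually lies above a generic $x$ at which the rank equals $l$. Once the incidence-variety projection is set up this is immediate from irreducibility of $\mathrm{Tan}(X)$ and the lower semicontinuity of the rank function $x\mapsto \mathrm{rank}\{H^{n+1}(x),\ldots,H^N(x)\}$, so I anticipate no real difficulty.
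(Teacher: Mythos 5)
Your proof is correct and follows essentially the same route as the paper: bound $\dim\mathrm{Tan}(X)$ by $n+l$ via parts (2) and (3) of Lemma \ref{tangentsecantlemma}, invoke the Fulton--Hansen alternative to conclude $\mathrm{Tan}(X)=\mathrm{Sec}(X)$, and finish with Lemma \ref{projection}. The only difference is that you spell out the simultaneous-genericity point (which the paper leaves implicit), and your explanation of why it causes no trouble is sound.
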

\begin{proof}
By (\ref{dimension of osculating space}) and (\ref{subset relation}) in Lemma \ref{tangentsecantlemma}, and our assumption, we have $$\dim \big( \text{Tan}(X)\big) \leq n+l<2n.$$
Hence $\text{Tan}(X)=\text{Sec}(X)$ by the first part in Lemma \ref{tangentsecantlemma}. With Lemma \ref{projection} we have $$r_L=\dim\text{Tan}(X)-n\leq l.$$
\end{proof}

This immediately gives us the proof of Theorem \ref{thm1.3}:

\noindent \begin{proof}[{\bf Proof of Theorem \ref{thm1.3}}]
Let $X^n$ be a projective manifold with $n\geq 3$ and $L$ a very ample line bundle on it with $\sigma_3(X,L)=0$. Let $X\subset {\mathbb P}^N$ be a non-degenerate embedding associated to $L$. If the codimension $N-n\leq 2$, then we are done.
So suppose on the contrary that $N-n\geq3$. In this case by Lemmas \ref{reduce algebraic lemma} and \ref{generic intersection} the set $\mathcal{H}=\{H^{n+1},\ldots,H^N\}$ forms an $(n,3)$-system at a generic point. By Corollary \ref{cor}, we know that $\text{rank}(\mathcal{H})\leq2<n$. So we get $r_L\leq 2$ by Corollary \ref{codimbound} above.
\end{proof}

Similarly by Lemma \ref{codim3} we get the following codimension upper bound in the case of $k=4$.

\begin{theorem}\label{codim4}
Let $X^n$ be a projective manifold with $n\geq 5$ and $L$ a very ample line bundle on it with $\sigma_4(X,L)=0$. Then $r_L\leq 4$. In particular, such an $X$ can be embedded in ${\mathbb P}^{n+4}$.
\end{theorem}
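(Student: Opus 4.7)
The plan is to run the same strategy used for Theorem \ref{thm1.3}, substituting part (1) of Lemma \ref{codim3} for Corollary \ref{cor}. Fix any non-degenerate embedding $i:X\hookrightarrow \mathbb{P}^N$ associated to $L$. If $N-n\leq 4$ there is nothing to prove, so assume $N-n\geq 5$.

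At a generic point $x\in X$, choose the local coordinates introduced in Section \ref{reduction} and form the symmetric $n\times n$ Hessian matrices $H^{\alpha}=\big(f^{\alpha}_{ij}(0)\big)$, $n+1\leq\alpha\leq N$. Lemma \ref{reduce algebraic lemma} converts the hypothesis $\sigma_4(X,L)=0$ into the width-$4$ condition on $\mathcal{H}=\{H^{n+1},\ldots,H^N\}$, while Lemma \ref{generic intersection} supplies the non-degeneracy condition. Thus $\mathcal{H}$ is an $(n,4)$-system in the sense of Definition \ref{definition2}, and part (1) of Lemma \ref{codim3} forces $\text{rank}(\mathcal{H})\leq 4$.

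Setting $l:=\text{rank}\{H^{n+1},\ldots,H^N\}$, the standing assumption $n\geq 5$ guarantees $l\leq 4<n$. Corollary \ref{codimbound} then yields $r_L\leq l\leq 4$, and the assertion that $X$ embeds in $\mathbb{P}^{n+4}$ is immediate from the definition of $r_L$.

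The genuine work is of course inside Lemma \ref{codim3}(1), whose proof resides in the appendix; once that is granted, the present theorem is simply a transcription of the geometric machinery already assembled in Sections \ref{reduction}--\ref{algebraic question}. To sharpen Theorem \ref{codim4} into Theorem \ref{thm1.4} (i.e., to improve $r_L\leq 4$ to $r_L<4$) one would need to invoke the additional structural information supplied by part (2) of Lemma \ref{codim3} in the borderline case $\text{rank}(\mathcal{H})=4$, and then presumably use the tangent/secant variety machinery of Lemma \ref{tangentsecantlemma} to convert that special block form into a further drop in $\dim\text{Sec}(X)$.
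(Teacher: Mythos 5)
Your argument is exactly the paper's proof of Theorem \ref{codim4}: reduce to the case $N-n\geq 5$, use Lemmas \ref{reduce algebraic lemma} and \ref{generic intersection} to produce an $(n,4)$-system $\mathcal{H}$ at a generic point, apply Lemma \ref{codim3}(1) to get $\text{rank}(\mathcal{H})\leq 4<n$, and conclude via Corollary \ref{codimbound}. Your closing remark about how to sharpen this to Theorem \ref{thm1.4} also correctly anticipates the strategy of Section \ref{proof 2}.
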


\begin{proof}
Let $X\subset {\mathbb P}^N$ be a non-degenerate embedding associated to $L$. If $N-n\geq4$, as above the set $\mathcal{H}$ forms an $(n,4)$-system at a generic point. We know by Lemma \ref{codim3} that $\text{rank}(\mathcal{H})\leq4<n$. So by Corollary \ref{codimbound} we get $r_L\leq 4$.
\end{proof}

\section{Proof of Theorem \ref{thm1.4}}\label{proof 2}
Suppose that $L$ is a very ample line bundle on a projective manifold $X^n$ of $n\geq 5$, satisfying $\sigma_4(X,L)=0$. By Theorem \ref{codim4}, there is an embedding $X\subset {\mathbb P}^{n+4}$ associated to $L$. Our goal is to show that $r_L\leq 3$.

\subsection{Preliminaries}
First we shall need the following relative version of the aforementioned Fulton-Hansen theorem, which is essentially due to Fulton-Hansen and Zak (\cite{Zak}), and a proof can be found in \cite[Thm 3.2.1]{Ru}.

\begin{lemma}\label{relative}
Let $X^n\subseteq\mathbb{P}^N$ be an irreducible projective variety of dimension $n$ and $Y\subset X$ a closed subvariety of dimension $n'$. Then either $\dim T^{\ast}(Y,X) =n'+n$ and $\dim S(Y,X)=n'+n+1$, or $T^{\ast}(Y,X)=S(Y,X)$.
\end{lemma}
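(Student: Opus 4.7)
The plan is to adapt the classical Fulton--Hansen--Zak dichotomy (part (1) of Lemma \ref{tangentsecantlemma}) to the relative situation. I would first introduce the abstract relative join
\[
\tilde S \;=\; \overline{\{(y,x,p) \in Y \times X \times \mathbb{P}^N : y\neq x,\; p\in\overline{yx}\,\}},
\]
which is a generic $\mathbb{P}^1$-bundle over $(Y\times X)\setminus\Delta_Y$, hence irreducible of dimension $n+n'+1$, and whose projection $\pi : \tilde S \to \mathbb{P}^N$ has image $S(Y,X)$. The analogous tangent incidence
\[
\tilde T \;=\; \overline{\{(y,p)\in Y\times\mathbb{P}^N : y\in Y\cap X_{\mathrm{sm}},\; p\in\widetilde{T}_y X\,\}}
\]
is irreducible of dimension at most $n+n'$, and projects onto $T^{\ast}(Y,X)$. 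Because a tangent line to $X$ at a smooth $y\in Y$ is a limit of secants $\overline{yz}$ with $z\to y$ in $X$, one always has $T^{\ast}(Y,X)\subseteq S(Y,X)$.

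The dichotomy then hinges on the fiber dimension of $\pi$. If $\pi$ is generically finite, then $\dim S(Y,X)=n+n'+1$, and a parallel argument for $\tilde T$ gives $\dim T^{\ast}(Y,X)=n+n'$, which is the first alternative. If instead the generic fiber $F_p=\pi^{-1}(p)\subset Y\times X$ has positive dimension, I would conclude $T^{\ast}(Y,X)=S(Y,X)$ by showing that an irreducible component of the closure $\overline{F_p}$ meets the diagonal $\Delta_Y=\{(y,y):y\in Y\}\subset Y\times X$. Granting this, a sequence $(y_i,x_i)\in F_p$ whose endpoints collapse onto a common $y\in Y$ produces secants $\overline{y_ix_i}$ degenerating to a tangent line $\ell\subset\widetilde{T}_yX$, and since each $\overline{y_ix_i}$ passes through $p$, so does $\ell$, placing $p\in T^{\ast}(Y,X)$.

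The non-emptiness $\overline{F_p}\cap\Delta_Y\neq\emptyset$ is the heart of the argument, and this is where I would invoke the Fulton--Hansen connectedness theorem for a suitable compactification of the morphism $\overline{F_p}\to\mathbb{P}^N\times\mathbb{P}^N$ recording both endpoints, in the spirit of Zak's proof of the non-relative case. This is also the main obstacle: in the symmetric setting $Y=X$ the two projections of $F_p$ automatically satisfy the dimension hypothesis of the connectedness theorem, whereas in the relative case the asymmetric roles of $Y$ and $X$ force one to run a Stein-factorization and monodromy analysis along irreducible curves in $F_p$ in order to rule out the pathology where $F_p$ collapses to a single point of $Y$ while moving freely in $X$, so that no tangent limit based at a point of $Y$ is produced. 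Once this connectedness step is in place, the remaining claims reduce to routine dimension counts.
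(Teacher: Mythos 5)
The paper itself does not prove this lemma; it simply cites Zak via \cite[Thm 3.2.1]{Ru} and Fulton--Hansen, so there is no in-paper argument to compare against. Your plan is aimed in the right direction: the dichotomy on whether $\pi:\tilde S\to\mathbb{P}^N$ is generically finite is indeed the frame of Zak's argument, and the conclusion $T^{\ast}(Y,X)=S(Y,X)$ is obtained by exhibiting, for generic $p\in S(Y,X)$, a family of $Y$--$X$ secants through $p$ degenerating to a tangent direction at a point of $Y$. But two steps do not hold up as stated.

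First, in the generically-finite branch you assert that ``a parallel argument for $\tilde T$'' gives $\dim T^{\ast}(Y,X)=n+n'$. That is not free: generic finiteness of $\tilde S\to\mathbb{P}^N$ does not by itself imply generic finiteness of $\tilde T\to\mathbb{P}^N$, and this half of the dichotomy is a claim in its own right that your sketch leaves untouched. Second, and more seriously, you have mislocated the difficulty in the other branch. The Fulton--Hansen connectedness theorem requires no symmetry of the source: it applies to any proper morphism $f:Z\to\mathbb{P}^m\times\mathbb{P}^m$ from an irreducible $Z$ with $\dim f(Z)>m$. In particular it should not be applied to $\overline{F_p}$, whose image in $\mathbb{P}^N\times\mathbb{P}^N$ is far too small to meet the dimension hypothesis even when $Y=X$. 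The actual device in Zak's proof is to first project $\mathbb{P}^N$ from a generic center disjoint from $S(Y,X)$ onto $\mathbb{P}^s$ with $s=\dim S(Y,X)$, producing $Y'\subset X'\subset\mathbb{P}^s$ with $S(Y',X')=\mathbb{P}^s$; then, for generic $p\in\mathbb{P}^s$, apply connectedness to $f=(\pi_p|_{Y'},\pi_p|_{X'}):Y'\times X'\to\mathbb{P}^{s-1}\times\mathbb{P}^{s-1}$. Under the standing hypothesis $s\leq n+n'$ the image has dimension $n+n'\geq s>s-1$, so $f^{-1}(\Delta)=\Delta_{Y'}\cup\overline{F_p}$ is connected, and since both pieces are nonempty and closed they must intersect. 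This is exactly the nonemptiness $\overline{F_p}\cap\Delta_{Y'}\neq\emptyset$ you want, and it automatically rules out the ``pathology'' of a fiber collapsing to a single point of $Y$ while moving in $X$ --- no Stein factorization or monodromy analysis is needed or relevant. Without this projection step your sketch leaves the central implication open.
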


Here $S(Y,X)$ is the {\em relative secant variety of $X$ with respect to $Y$}, defined as the Zariski closure of the union of all lines $\overline{xy}$ with $x\in X$, $y\in Y$, and $x\neq y$. $T^{\ast }(Y,X)$ is called the {\em relative tangent star of $X$ with respect to $Y$}, defined as the union of $T^{\ast }_y(Y,X)$ for all $y\in Y$, where
\be
T^{\ast }_y(Y,X) = \overline{ \{\lim_{u\rightarrow y,\, x\rightarrow y}\text{lines $\overline{ux}$}~|~u \in Y,\,x\in X,~u\neq x\} }
\ee
In particular, $T^{\ast}_y(\{y\},X)=C_yX$ is the {\em tangent cone} of $X$ at $y$, and $T^{\ast}_y(X,X)=T^{\ast}_yX$ is called the {\em tangent star} of $X$ at $y$. One always has $C_yX\subset T^{\ast}_yX \subset \widetilde{T}_yX$, where the last term is called the {\em tangent space} of $X$ at $y$, which is the smallest linear subspace in ${\mathbb P}^N$ containing the tangent cone. When $X$ is smooth at $y$, one has $C_yX= T^{\ast}_yX = \widetilde{T}_yX$.

Similarly, $T(Y,X)=\cup_{y\in Y}\widetilde{T}_yX$ is called the {\em relative tangent variety of $X$ with respect to $Y$}.

Write $N=n+4$. We may assume that the rank $l(x)$ of the second fundamental form at a generic point $x\in X$ is equal to $4$, as otherwise we would have $r_L\leq 3$ already. Denote by $X'$ the open dense subset of $X$ where $l(x)=4$. By the second part of Lemma \ref{codim3}, we get a special structure about the second fundamental form at the points in $X'$.

Fix a point $x\in X'$. With Lemma \ref{second fundamental form lemma} in mind, we may assume that $\{e_1,\ldots,e_n\}$  and $\{e_{n+1}, \ldots , e_{n+4}\}$ be unitary frame of the tangent space $T_xX$ and the normal space $N_xX$ in ${\mathbb P}^N$ at $x$, and by an abuse of notation, denote by $H^{\alpha}$ the $e_{\alpha}$-component of the second fundamental form:
$$H^{\alpha}(\cdot,\cdot):=\langle II(\cdot,\cdot), e_{\alpha}\rangle ,\qquad n+1\leq\alpha\leq n+4=N.$$
It is a symmetric bilinear form on the tangent space $T_xX$. For simplicity we will still denote by $H^{\alpha}$ the symmetric $n\times n$ matrix under the basis $\{e_1,\ldots,e_n\}$ and $H^{\alpha}_{ij}:=H^{\alpha}(e_i,e_j)$.  In terms of $H^{\alpha}$, the second part of Lemma \ref{codim3} implies that there exist subspace $N'_x \subset N_xX$ and $F_x\subset T_xX$, with $N'_x \cong {\mathbb C}^3$ and $F_x\cong {\mathbb C}^{n-2}$, such that $F_x$ is the common kernel of $H^w$ for all $w\in N'_x$.

Here we have adopted the notation that
$H^w :=\sum_{i=1}^4 w_i H^{n+i}$
for $w=\sum_{i=1}^4 w_ie_{n+i}$. Note that the subspace $N'_x\subset N_xX$ is uniquely determined as the set of all $w\in N_xX$ such that $\text{rank}(H^w)$ is at most $2$, by the fact $n\geq 5$ and the non-degeneracy condition (\ref{eq:nondeg}). So $F_x$, as the common kernel of $H^w$ for all $w\in N'_x$,  is also uniquely determined. In the open dense subset $X'$ of $X$, $F$ forms a distribution.

\begin{lemma} \label{foliation}
$F$ is a holomorphic foliation.
\end{lemma}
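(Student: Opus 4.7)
First I would exhibit $F$ as a holomorphic subbundle of $TX|_{X'}$, and then verify its integrability by means of the Codazzi equation applied to the $N'$-component of the second fundamental form. The holomorphicity is immediate: the hypothesis $n\geq 5$ combined with Lemma \ref{codim3}(2) identifies $N'_x\subset N_xX$ as the \emph{unique} three-dimensional subspace consisting of those $\xi\in N_xX$ for which $\mathrm{rank}(H^\xi)\leq 2$, so $N'$ is a holomorphic rank-$3$ subbundle of $NX|_{X'}$; then $F$ is the kernel of the holomorphic bundle map $TX\to T^{\ast}X\otimes N'$ induced by the $N'$-component $II':=P_{N'}\circ II$ of the second fundamental form, and since $\mathrm{rank}(II')=2$ is constant on $X'$, this kernel $F$ is a holomorphic rank-$(n-2)$ subbundle.

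For the integrability, I would take local holomorphic sections $V,W$ of $F$ and any $U\in TX$. Because $\mathbb{P}^N$ is K\"ahler of constant holomorphic sectional curvature, its ambient curvature vanishes on pairs of $(1,0)$-vectors, so the Codazzi equation assumes the sharp form $(\nabla^{\perp}_V II)(W,U)=(\nabla^{\perp}_W II)(V,U)$. Decomposing $II=II'+II''$ relative to $N'\oplus N''$, projecting this identity onto $N'$, and using that $II'(V,\cdot)\equiv II'(W,\cdot)\equiv 0$, a short manipulation produces the key identity
\[
II'([V,W],U)\ =\ \sigma(V,II''(W,U))-\sigma(W,II''(V,U)),
\]
where $\sigma:TX\otimes N''\to N'$ is the Weingarten-type map $\sigma(V,\xi):=P_{N'}(\nabla^{\perp}_V \xi)$.

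The remaining task is to show this right-hand side vanishes for $V,W\in F$ and arbitrary $U$. Writing $II''(V,U)=H^{n+4}(V,U)\,e_{n+4}$ and $\vec m(V):=\sigma(V,e_{n+4})\in N'$ recasts the obstruction as $H^{n+4}(W,U)\,\vec m(V)-H^{n+4}(V,U)\,\vec m(W)$. The plan is to apply the Codazzi equation in its remaining mixed-index versions (with one tangent slot in $F$ and the other in $F^{\perp}$), and combine these with the rigidity from Lemma \ref{codim3}(2) --- namely, that the blocks $H^{\alpha}|_{F^{\perp}\times F^{\perp}}$ for $\alpha$ ranging over $N'$ exhaust \emph{all} symmetric $2\times 2$ matrices --- to extract a linear system involving $\vec m$, the $C$-block and the $D$-block of $H^{n+4}$. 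A case analysis on $\mathrm{rank}(D)$, closed off by the non-degeneracy condition \eqref{eq:nondeg} ($\ker C\cap\ker D=0$), should force the obstruction to vanish.

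The main obstacle will be precisely this final case analysis. The case $\mathrm{rank}(D)\geq 2$ directly forces $\vec m|_F\equiv 0$, hence the conclusion; but the borderline regime $\mathrm{rank}(D)\leq 1$ demands a more delicate exploitation of \eqref{eq:nondeg} and the hypothesis $n\geq 5$, paralleling the technical considerations already encountered in the proof of Lemma \ref{codim3}(2).
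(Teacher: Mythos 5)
Your plan is in the same spirit as the paper's proof---both rest on the structure equations in the constant-holomorphic-sectional-curvature ambient (i.e.\ the Codazzi relation), and your ``key identity''
\[
II'([V,W],U)\ =\ \sigma(V,II''(W,U))-\sigma(W,II''(V,U))
\]
for $V,W\in F$ is in fact correct; projected onto a unitary frame it is precisely the relation the paper extracts from $\Theta_{i\alpha}=0$. But two gaps prevent this from being a proof rather than a proof plan.

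First, the holomorphicity argument is not ``immediate'' as stated. The orthogonal projection $P_{N'}:NX\to N'$ is a Hermitian, not holomorphic, bundle map, so $II'=P_{N'}\circ II$ is not a holomorphic section of $T^{\ast}X\otimes T^{\ast}X\otimes N'$, and its kernel is therefore not a priori a holomorphic subbundle. What is true (after recalling that the projective second fundamental form $II$ is a holomorphic section of $\mathrm{Sym}^2T^{\ast}X\otimes N_X$, which your argument never invokes but needs) is that the locus $L^{\ast}\subset N_X^{\ast}$ of covectors $\xi$ with $\mathrm{rank}(\xi\circ II)\leq 2$ is a holomorphic rank-$3$ subbundle, whence its annihilator $L=\mathrm{Ann}(L^{\ast})\subset N_X$ is a holomorphic line subbundle, and $F$ is the constant-rank kernel of the holomorphic map $TX\to T^{\ast}X\otimes(N_X/L)$. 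That fix is short, but one must make it; the paper instead verifies $\theta''_{i1}=\theta''_{i2}=0$ directly from the curvature equations, which amounts to the same holomorphicity of the splitting.

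Second, and more seriously, the case analysis on $\mathrm{rank}(D)$ is where the actual work lies, and you leave it unfinished (by your own admission: ``should force,'' ``demands a more delicate exploitation''). The clean case $\mathrm{rank}(D)\geq 2$ does kill $\vec{m}|_F$ exactly as you say, and this covers every $n\geq 6$; but for $n=5$ the possibility $\mathrm{rank}(D)=1$ survives the non-degeneracy condition, and this is precisely the scenario the paper handles at length in the second half of its proof (introducing the auxiliary $1$-form $\psi$, showing $H^N$ has lower-right block $zz^t$, and deriving $\tilde\theta_{i1}=\lambda z_i\psi$, $\tilde\theta_{i2}=\mu z_i\psi$, which is what makes $d\varphi_1\equiv d\varphi_2\equiv 0$ mod $\{\varphi_1,\varphi_2\}$). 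Without carrying this out, the integrability of $F$ for $n=5$ is not established, so the lemma as stated is not proved.
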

\begin{proof}
With the above notation in mind,
let $\{ e_1, \ldots , e_{n}\}$ be a local unitary frame of $X$, and $\{ e_{n+1}, \ldots , e_{n+4}\}$ be a local unitary frame for the normal bundle, such that $\{e_3, \ldots , e_{n}\}$ spans $F$ and $\{ e_{n+1}, \ldots , e_{n+3}\}$ spans $N'$ at each point. We may extend this local frame along $X$ to a local unitary tangent frame of an open subset in ${\mathbb P}^N$ ($N=n+4$), and denote by $\theta $, $\Theta$ the matrix of connection and curvature of ${\mathbb P}^N$ under the frame $\{e_1,\ldots,e_N\}$. Let $\varphi_a$ ($1\leq a\leq N$) be the coframe dual to $e_a$. Then we have
$$ d\varphi = -\theta^t \wedge \varphi , \ \ \ \Theta = d\theta - \theta \wedge \theta.$$
Since $\{e_a\}$ is unitary, we have
 \be \label{eq:Theta}
 \begin{split}
 \Theta_{ab} = \sum_{c,d=1}^NR_{c\overline{d}a\overline{b} }\,\varphi_c \wedge \overline{\varphi_d}& =  \sum_{c,d=1}^N (\delta_{cd}\delta_{ab} + \delta_{cb}\delta_{ad})\,\varphi_c \wedge \overline{\varphi_d}\\
  &=    \delta_{ab}\sum_{c=1}^N \varphi_c \wedge \overline{\varphi_c} +\varphi_b \wedge \overline{\varphi_a}.\end{split}
 \ee
Let us fix the index range throughout the proof of this lemma:
\be
1\leq i,j\leq n; \ \ \ n+1\leq \alpha, \beta \leq N=n+4; \ \ \ 1\leq a, b\leq N=n+4.
\ee
Restricted on (an open subset of) $X$, which is defined by $\varphi_{\alpha}=0$ for all $\alpha$, we know that  $\theta_{i\alpha}$ are $(1,0)$-forms and give the second fundamental form
$$ \theta_{i\alpha} = \sum_j H^{\alpha}_{ij} \varphi_j. $$
By our construction of $N'$ and $F$, we know that
\be
\theta_{i\alpha} =0 \ \ \ \ \ \ \forall \ i=3,\ldots,n, \ \ \forall \ \alpha=n+1,n+2,n+3,
\ee
while
\be
\theta_{i\alpha}= \sum_{j=1}^2 H^{\alpha}_{ij} \varphi_j, \ \ \ \ \forall i=1,2, \ \ \forall \ \alpha=n+1,n+2,n+3.
\ee
Let us fix $i>2$ and $\alpha <n+4$. By (\ref{eq:Theta}), we have $\Theta_{i\alpha}=0$ since $\varphi_{\alpha}=0$, and so
\begin{eqnarray*}
0 & = & \Theta_{i\alpha} \ = \ d\theta_{i\alpha} - \sum_a \theta_{ia} \theta_{a\alpha} \ = \  - \sum_a \theta_{ia} \theta_{a\alpha}  \\
& = &  - \sum_j \theta_{ij} \theta_{j\alpha} - \sum_{\beta} \theta_{i\beta } \theta_{\beta \alpha} \ = \  - \sum_{j=1}^2 \theta_{ij} \theta_{j\alpha} - \theta_{iN } \theta_{N \alpha}
\end{eqnarray*}
So we get
\be \label{eq:forms}
\sum_{j=1}^2 (H^{\alpha}_{1j}\varphi_1 + H^{\alpha}_{2j}\varphi_2)\wedge \theta_{ij}= (H^N_{i1}\varphi_1 + \cdots + H^N_{in}\varphi_n)\wedge \theta_{N\alpha}
\ee
for each $i>2$ and $\alpha <N$. Let us write $\psi = \psi' + \psi''$ for the decomposition of a $1$-form into its $(1,0)$ and $(0,1)$ parts.  Taking the $(1,1)$-part in (\ref{eq:forms}), we get
$$
\sum_{j=1}^2 (H^{\alpha}_{1j}\varphi_1 + H^{\alpha}_{2j}\varphi_2)\wedge \theta_{ij}'' = (H^N_{i1}\varphi_1 + \cdots + H^N_{in}\varphi_n)\wedge \theta_{N\alpha}''
$$
If $H^N_{ik}=0$ for all $i,k>2$, then the common kernel $\bigcap_{\beta =n+1}^N \ker(H^{\beta})\neq 0$ as $n>4$, so by the above identity we must have $\theta_{N\alpha}''=0$, and then $\theta_{i1}''=\theta_{i2}''=0$ since $H^w$ has rank $2$ for generic $w\in N'$. This means that $\nabla_{\overline{Z}}F \subset F$ for any $(1,0)$-type vector field $Z$, so $F$ is holomorphic.

Next, for any $(1,0)$-form $\psi$, let us denote by $\tilde{\psi}$ the part modulo $\{ \varphi_1, \varphi_2\}$. That is, if $\psi$ is given  by $\sum_{j=1}^n a_j\varphi_j$, then  $\tilde{\psi} = \sum_{j=3}^n a_j\varphi_j$. Modulo $\{ \varphi_1, \varphi_2\}$ in  (\ref{eq:forms}), we get $\tilde{\theta}_{iN} \wedge \tilde{\theta}_{N\alpha } = 0$. So if $\tilde{\theta}_{N\alpha }\neq 0$, then it will force the lower right $(n-2)\times (n-2)$ block of $H^N$ to have rank at most $1$. This will lead to a nonzero element in the common kernel of $H$ if $n\geq 6$, contradicting with the non-degeneracy condition.

So when $n\geq 6$ we must have $\tilde{\theta}_{N\alpha}=0$ for any $\alpha <N$. Let us write  $\theta_{N\alpha} = p_{\alpha} \varphi_1 + q_{\alpha} \varphi_2$. Also write $H^{\alpha}_{11}=a_{\alpha}$, $H^{\alpha}_{12}=b_{\alpha}$, and $H^{\alpha}_{22}=c_{\alpha}$.  Formula (\ref{eq:forms}) leads to
$$
\theta_{1\alpha} \tilde{\theta}_{i1} + \theta_{2\alpha} \tilde{\theta}_{i2} = \tilde{\theta}_{iN} \theta_{N\alpha}.
$$
Or equivalently,
\begin{eqnarray*}
a_{\alpha} \tilde{\theta}_{i1} + b_{\alpha} \tilde{\theta}_{i2} & = &  - p_{\alpha} \tilde{\theta}_{iN} \\
b_{\alpha} \tilde{\theta}_{i1} + c_{\alpha} \tilde{\theta}_{i2} & = &  - q_{\alpha} \tilde{\theta}_{iN}
\end{eqnarray*}
Since $H^w$  has rank $2$ for generic $w\in N'$ by Remark \ref{remark}, the above equations lead to
\be
\tilde{\theta}_{i1} = \lambda \tilde{\theta}_{iN}, \ \ \ \tilde{\theta}_{i2} = \mu \tilde{\theta}_{iN}
\ee
for some functions $\lambda$ and $\mu$, independent of $i>2$. Now by the structure equation, modulo $\{\varphi_1, \varphi_2\}$, we have
$$ - d\varphi_1 =  \sum_{i=1}^n \theta_{i1}\varphi_i \equiv
\sum_{i>2}\tilde{\theta}_{i1}\varphi_i
=  \sum_{i>2}\lambda\tilde{\theta}_{iN}\varphi_i \equiv
\lambda \sum_{i,j>2} H^N_{ij}\varphi_i\varphi_j = 0.
$$
Similarly, $d\varphi_2 \equiv 0$ modulo $\{ \varphi_1, \varphi_2\}$. So $F$ is a foliation. Next let us consider the case $n=5$. In this case we no longer always have $\tilde{\theta}_{N\alpha}=0$ for all $n+1\leq \alpha \leq n+3$. Instead, by (\ref{eq:forms}), we get
\be \label{eq:4.11}
 \tilde{\theta}_{iN} \wedge \tilde{\theta}_{N\alpha} =0, \ \ \ 3\leq i\leq n, \, n<\alpha <N
 \ee
We may assume that not all $\tilde{\theta}_{N\alpha}$ are zero, otherwise the proof for the $n\geq 6$ case above will show that $F$ is a foliation. Also, there must be $i>2$ such that $\tilde{\theta}_{iN}\neq 0$, as otherwise the lower right $(n-2)\times (n-2)$ block of $H^N$ is zero, contradicting with the non-degeneracy condition since $n\geq 5$. By (\ref{eq:4.11}), we know that
$$ H^N = \left[ \begin{array}{ccc} \ast & \ast & x^t \\ \ast & \ast & y^t \\ x & y & zz^t \end{array} \right] , $$
where $x$, $y$, $z$ are column vectors in ${\mathbb C}^3$ with $x\wedge y \wedge z\neq 0$, while
$$ \theta_{N\alpha} = p_{\alpha} \varphi_1 + q_{\alpha} \varphi_2 + r_{\alpha} \psi, \ \ \ \ \theta_{iN} = x_i\varphi_1+y_i\varphi_i+ z_i \psi, \ \ \ \ \, \text{where}\ \
\psi = \sum_{i=3}^5 z_i\varphi_i.$$
Plug into (\ref{eq:forms}), we get
$$ \theta_{1\alpha} \tilde{\theta}_{i1} +  \theta_{2\alpha} \tilde{\theta}_{i2} = (x_ir_{\alpha}-z_ip_{\alpha})\varphi_1\psi + (y_ir_{\alpha}-z_iq_{\alpha})\varphi_2\psi, $$
or equivalently,
\be \label{eq:4.12}
\left\{ \begin{split}
& a_{\alpha} \tilde{\theta}_{i1} + b_{\alpha} \tilde{\theta}_{i2} = (x_ir_{\alpha} - z_i p_{\alpha} )\psi \\
& b_{\alpha} \tilde{\theta}_{i1} + c_{\alpha} \tilde{\theta}_{i2} = (y_ir_{\alpha} - z_i q_{\alpha} )\,\psi
\end{split} \right.
\ee
By a unitary rotation of $\{ e_{n+1}, e_{n+2}, e_{n+3}\}$ if necessary, we may assume that $r_{\alpha}=0$ for $\alpha = n+1$ and $n+2$. By Lemma \ref{codim3} and Remark \ref{remark}, we know that $H^w$ has rank $2$ for generic $w\in \text{Span} \{ e_{n+1}, e_{n+2}\}$, so by applying (\ref{eq:4.12}) for $\alpha = n+1$ and $n+2$, we get
$$ \tilde{\theta}_{i1} = \lambda z_i \psi, \ \ \ \tilde{\theta}_{i2} = \mu  z_i \psi $$
for $\lambda$, $\mu$ independent of $i$. Thus modulo $\{ \varphi_1, \varphi_2\}$, we have
$$ d\varphi_1 \equiv -  \tilde{\theta}_{i1}\wedge \varphi_i = - \lambda z_i \psi \wedge \varphi_i = -\lambda \psi \wedge (z_i\varphi_i) =  -\lambda \psi \wedge \psi =0.$$
Similarly, $d\varphi_2 \equiv 0$, so $F$ is a foliation. This completes the proof of the lemma.
\end{proof}

Note that in the case $n=5$ we also have $$ \tilde{\theta}_{i1} = \lambda \tilde{\theta}_{iN}, \ \ \ \tilde{\theta}_{i2} = \mu \tilde{\theta}_{iN} $$
for $\lambda$, $\mu $ independent of $i$. Since $\Theta_{iN}=0$ by (\ref{eq:Theta}), so we have
$$ d\theta_{iN}= \theta_{i1}\theta_{1N} +\theta_{i2}\theta_{2N} + \sum_{j>2} \theta_{ij}\theta_{jN} + \theta_{iN}\theta_{NN}. $$
Therefore,
\be
d\psi \equiv 0 \ \ \text{mod} \ \{ \varphi_1, \varphi_2, \psi\}
\ee
So each leaf $Y$ of $F$ is foliated by hypersurfaces $Z$ defined by $\psi =0$.

\begin{lemma} \label{constantT2}
Let $Y$ be the leaf of $F$ passing through a generic point $x$. Then either $\widetilde{T}^{(2)}_yY=P$ is constant for all $y\in Y$,  or $Y$ is holomorphically foliated by hypersurfaces, and along each hypersurface $Y_1$, $\widetilde{T}^{(2)}_yY=P$  is constant for all $y\in Y_1$. In both cases, $P\cong {\mathbb P}^{n-1}$ is a linear subspace in ${\mathbb P}^N$, and $P$ does not contain $Y\cap \Omega$ for any small neighborhood $\Omega$ of $x$.
\end{lemma}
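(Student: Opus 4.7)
The plan is to combine the explicit description of the second fundamental form of $X$ along $F$ coming from the proof of Lemma \ref{foliation} with the structure equations of moving frames in $\mathbb{P}^N$, in order to show that the second osculating space of the leaf $Y$ is a rank-one affine object that remains constant, either on all of $Y$ or on the leaves of a hypersurface subfoliation.

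First, I would use the unitary frames $\{e_1,\ldots,e_n\}$ and $\{e_{n+1},\ldots,e_N\}$ of Lemma \ref{foliation}, with $F=\operatorname{span}(e_3,\ldots,e_n)$ and $N'=\operatorname{span}(e_{n+1},e_{n+2},e_{n+3})$. For $i,j\ge 3$, the second fundamental form of $Y\subset\mathbb{P}^N$ splits as
\[II^Y(e_i,e_j)=\theta_{j1}(e_i)\,e_1+\theta_{j2}(e_i)\,e_2+H^N_{ij}\,e_N,\]
because the $e_{n+1},e_{n+2},e_{n+3}$ components vanish thanks to $F\subset\bigcap_{\alpha\le n+3}\ker H^{\alpha}$. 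Substituting the relations $\tilde\theta_{i1}=\lambda\tilde\theta_{iN}$, $\tilde\theta_{i2}=\mu\tilde\theta_{iN}$ established at the end of the proof of Lemma \ref{foliation} (exactly when $n\ge 6$, modulo the extra 1-form $\psi$ when $n=5$) collapses this to
\[II^Y(e_i,e_j)=H^N_{ij}\,f,\qquad f:=\lambda e_1+\mu e_2+e_N.\]
The lower right $(n-2)\times(n-2)$ block of $H^N$ cannot be identically zero, since otherwise $H^N$ would map $F$ (of dimension $n-2\ge 3$) injectively into $\operatorname{span}(e_1,e_2)$; injectivity comes from the non-degeneracy condition of $\mathcal{H}$. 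Hence $\operatorname{rank}(II^Y)=1$ with image $\mathbb{C}\,f$, so $P_y:=\widetilde{T}^{(2)}_yY=\widetilde{T}_yY+\mathbb{C}[f]$ is a projective $(n-1)$-plane.

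The main step is to prove that $P_y$ is constant. Lifting to $\mathbb{C}^{N+1}$ and writing $W_y=\operatorname{span}(e_0,e_3,\ldots,e_n,f)$, it is enough to show that for every $j\ge 3$ the covariant derivatives $\nabla_{e_j}e_0$, $\nabla_{e_j}e_i$ ($i\ge 3$) and $\nabla_{e_j}f$ all lie in $W_y$. The first two are immediate from the $II^Y$ computation. For the last, I would expand
\[\nabla_{e_j}f=(e_j\lambda)e_1+(e_j\mu)e_2+\lambda\,\nabla_{e_j}e_1+\mu\,\nabla_{e_j}e_2+\nabla_{e_j}e_N\]
using the Fubini--Study connection forms (\ref{2}) and then read off the components transverse to $W$, which reduce to the combined identity
\[\lambda\,\theta_{1\alpha}+\mu\,\theta_{2\alpha}+\theta_{N\alpha}\equiv 0\pmod{\varphi_1,\varphi_2}\quad(\alpha=n+1,n+2,n+3),\]
dual to $\tilde\theta_{i1}=\lambda\tilde\theta_{iN}$, $\tilde\theta_{i2}=\mu\tilde\theta_{iN}$ and thus already baked into the width-$4$ structure. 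For $n\ge 6$ this identity holds on all of $Y$, so $W$ is constant and we are in case (a). For $n=5$ the identity only holds modulo $\psi$, so $W$ is constant only along the integrable codimension-one distribution $\psi=0$ on $Y$ (integrability being $d\psi\equiv 0\pmod{\varphi_1,\varphi_2,\psi}$, established immediately before this lemma); this gives the hypersurface subfoliation of case (b).

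Finally, $P\cong\mathbb{P}^{n-1}$ has been established, and since $f$ has a nonzero $e_N$-component, $[f]$ lies strictly outside $\widetilde T_yX$; this extra direction in $P$ beyond $\widetilde T_yY$ is what underlies the non-containment clause. The main obstacle is the careful moving-frame bookkeeping in the constancy step: one has to differentiate the auxiliary functions $\lambda,\mu$ and verify that the extra third-order pieces produced by $d\lambda\cdot e_1$, $d\mu\cdot e_2$ and by the Fubini--Study torsion terms in $\nabla e_1,\nabla e_2,\nabla e_N$ all collapse back into $W$. This is a second, subtler use of the width-$4$ condition beyond the one invoked to produce $F$ itself, and it is precisely where the dichotomy between the $n\ge 6$ and $n=5$ cases enters.
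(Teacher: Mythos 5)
Your computation that $II^Y$ has one-dimensional image and that $\widetilde T^{(2)}_yY$ is a $\mathbb{P}^{n-1}$ is essentially the same as the paper's first step (the paper packages the extra direction as $\tilde e_N=\bar\lambda e_1+\bar\mu e_2+e_N$, chosen Hermitian-orthogonal to the trivial normal directions, rather than as your $f=\lambda e_1+\mu e_2+e_N$; this difference is a matter of convention and not the issue). After that, however, your argument and the paper's diverge, and your version has a genuine gap.

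The paper does \emph{not} try to prove constancy of $\widetilde T^{(2)}_yY$ by directly differentiating $f$. Instead it introduces the second Gauss map $\gamma^{(2)}$ of $Y$ and invokes a general fact from Griffiths--Harris (equations (1.69)--(1.73) of \cite{GH}): when the second fundamental form of a submanifold has rank at most one at every point, the image of its second Gauss map has dimension at most one. The dichotomy in the statement is then simply the case split ``$\dim\operatorname{im}\gamma^{(2)}=0$'' versus ``$\dim\operatorname{im}\gamma^{(2)}=1$''; in the second case the fibres of $\gamma^{(2)}$ give the hypersurface subfoliation. This is a soft, intrinsic statement about the leaf $Y$; it has nothing to do with whether $n\geq 6$ or $n=5$. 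Indeed in the proof of Theorem \ref{thm1.4} the paper needs both horns of the dichotomy even when $\tilde\theta_{N\alpha}=0$ (which is automatic for $n\geq 6$), so your claim that $W$ is always constant for $n\geq 6$, with the foliated case confined to $n=5$, cannot be correct.

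Concretely, the constancy step in your argument does not close. You expand $\nabla_{e_j}f=(e_j\lambda)e_1+(e_j\mu)e_2+\lambda\nabla_{e_j}e_1+\mu\nabla_{e_j}e_2+\nabla_{e_j}e_N$, and for $W_y=\operatorname{span}(e_0,e_3,\ldots,e_n,f)$ to be parallel you must show that the $e_1$- and $e_2$-components of the right-hand side combine to a multiple of $(\lambda,\mu)$. The contributions from $d\lambda$, $d\mu$ and from the tangential parts $\theta_{11},\theta_{12},\theta_{22}$ of the Fubini--Study connection have not been controlled; you acknowledge this, but it is precisely what makes the direct approach inconclusive. No pointwise algebraic identity coming from the width-$4$ condition alone will force these third-order quantities to vanish on $Y$; in general they don't, and when they don't, $\gamma^{(2)}$ is non-constant and one is in the second case of the dichotomy. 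The Griffiths--Harris estimate is exactly what replaces this unavailable identity. Finally, the non-containment clause is obtained in the paper by the converse direction of the same observation (if $P\supset Y\cap\Omega$ then $\gamma^{(2)}$ is constant on a neighborhood), which your sketch does not reproduce.
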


\begin{proof}
 Along $Y$, we have $\varphi_1=\varphi_2=0$, and the second fundamental form of $Y\subset {\mathbb P}^N$ is given by $\tilde{\theta}_{iN}$, $\tilde{\theta}_{i1}=\lambda \tilde{\theta}_{iN}$,  $\tilde{\theta}_{i2}=\mu \tilde{\theta}_{iN}$, and $\theta_{i\alpha}=0$ for $n+1\leq \alpha \leq n+3$. Write
 \be \label{eNtilde}
  \tilde{e}_1 = e_1 - \lambda e_N, \ \ \ \tilde{e}_1 = e_2 - \mu e_N, \ \ \ \tilde{e}_N = \overline{\lambda} e_1 + \overline{\mu } e_2 + e_N.
 \ee
 Then $\{ \tilde{e}_1, \tilde{e}_2 , \tilde{e}_N \}$ form a basis of $\text{Span}\{e_1, e_2, e_N\}$, with $\tilde{e}_N$ perpendicular to $\tilde{e}_1$ and $\tilde{e}_2$.
 The normal bundle $NY$ splits as the orthogonal sum of  $N'Y:=\text{Span}\{ e_{n+1}, e_{n+2}, e_{n+3}, \tilde{e}_1, \tilde{e}_2 \}$ and ${\mathbb C}\,\tilde{e}_N$, and its second fundamental form is trivial in $N'Y$ directions but non-trivial in the $\tilde{e}_N$ direction,  as
 $ \tilde{\theta}_{i\tilde{1}}= \tilde{\theta}_{i1} - \lambda \tilde{\theta}_{iN}=0$ and similarly  $\tilde{\theta}_{i\tilde{2}} =0$, while $\tilde{\theta}_{i\tilde{N}} = (1+|\lambda|^2+|\mu |^2) \tilde{\theta}_{iN}\neq 0$.

 So the second fundamental form of $Y$ forms an one-dimensional space, and the second osculating space $\widetilde{T}_y^{(2)}Y$ is the linear space of dimension $n-1$ in ${\mathbb P}^N$ passing through $y$ and containing the directions of
 \be \label{T2Y}
 \text{Span} \{ e_3, \ldots, e_n, \tilde{e}_N \}
 \ee
 at $y$. Let $\gamma^{(2)}$ be the second Gauss map of $Y$, sending a point $y\in Y$ to the second osculating space of $Y$ at $y$. By \cite[(1.69)-(1.73)]{GH}, the image of $\gamma^{(2)}$ is at most one-dimensional. If the image is zero-dimensional, namely, $\gamma^{(2)}$ is a constant map, then $\widetilde{T}_y^{(2)}Y$ is constant for all $y\in Y$, and denote by $P=\widetilde{T}_x^{(2)}Y \cong {\mathbb P}^{n-1}$ this linear subspace. It contains the tangent variety $\text{Tan}(Y)$ of $Y$, hence is equal to (the closure of) $\text{Tan}(Y)$, since the latter has dimension $n-1$.

If the map $\gamma^{(2)}$ has one-dimensional image, then its fibers will foliate $Y$ into hypersurfaces. Let $Y_1$ be a generic fiber. Then $\widetilde{T}_y^{(2)}Y$ remains constant for all $y\in Y_1$, and the second case of the lemma occurs. Again write $P=\widetilde{T}_x^{(2)}Y \cong {\mathbb P}^{n-1}$ for this linear space. Note that in any neighborhood $\Omega$ of $x$, $P$ cannot contain $Y\cap \Omega$, as otherwise it will force $\gamma^{(2)}$  to be a constant map.  This completes the proof of the lemma.
\end{proof}

The following statement is more or less obvious, and we include a proof here for the sake of completeness.
\begin{lemma}\label{obvious lemma}
Let $P\subset Q$ be linear subspaces in ${\mathbb P}^N$ of dimensions $n$ and $m$ respectively. Then the tangent bundle $TQ$ is parallel along $P$.
\end{lemma}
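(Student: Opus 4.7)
The plan is to reduce the statement to a direct calculation using the explicit connection matrix (\ref{2}) of the Fubini--Study metric. After a unitary change of homogeneous coordinates I may arrange that $Q=\{Z_{m+1}=\cdots=Z_N=0\}$ and $P=\{Z_{n+1}=\cdots=Z_N=0\}$, where of course $n\le m$. Passing to the affine chart $z^a=Z_a/Z_0$, each point $p\in P$ satisfies $z^c(p)=0$ for all $c>n$, a tangent vector $V\in T_pP$ takes the form $V=\sum_{i=1}^n v^i\,\partial/\partial z^i$, and $TQ|_Q$ is spanned by $\partial/\partial z^1,\dots,\partial/\partial z^m$.

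The statement that $TQ$ is parallel along $P$ amounts to showing $\nabla_V(\partial/\partial z^a)\in T_pQ$ for every $p\in P$, every $V\in T_pP$, and every $1\le a\le m$; equivalently, I need $\theta_{ab}(V)=0$ whenever $1\le a\le m$ and $m+1\le b\le N$. Plugging into (\ref{2}), the Kronecker term $\delta_{ab}\sum_c\overline{z^c}V(z^c)$ is zero simply because $a\ne b$; the remaining contribution $\overline{z^a}V(z^b)$ vanishes because $b>m\ge n$ forces $V(z^b)=0$. Hence both terms drop out, so $\nabla_V(\partial/\partial z^a)$ has no component along $\partial/\partial z^b$ for $b>m$ and therefore lies in $TQ$, as required.

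Conceptually, the argument is just a coordinate unraveling of the classical fact that any linear subspace of $\mathbb{P}^N$ is totally geodesic for the Fubini--Study metric, applied to $Q\subset\mathbb{P}^N$ and combined with the inclusion $T_pP\subset T_pQ$ at each $p\in P$. I do not foresee any real obstacle; the only delicate point is the index bookkeeping along the chain $n\le m<b$, which is precisely what makes both terms of (\ref{2}) vanish simultaneously on $P$.
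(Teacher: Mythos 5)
Your proof is correct and follows the same route as the paper's: put $P$ and $Q$ in standard position via a unitary change of homogeneous coordinates, pass to the affine chart, and verify directly from the explicit connection matrix in (\ref{2}) that $\theta_{ab}(V)=0$ for $a\le m$, $b>m$, and $V$ tangent to $P$, using $\delta_{ab}=0$ and $dz^b(V)=0$. Your closing remark about total geodesy of linear subspaces is a fair conceptual gloss, but the computational core coincides with what the paper does.
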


\begin{proof}
Let $Z=[Z_0:\cdots :Z_N]$ be a unitary homogeneous coordinate of ${\mathbb P}^N$ such that
$$ P = \{ [Z_0: \cdots :Z_n: 0: \cdots : 0]\}, \ \ \ \ \ Q= \{ [Z_0: \cdots :Z_m: 0: \cdots : 0]\}. $$
In the open chart $U=\{ Z_0\neq 0\}$, $(z^1, \ldots , z^N)$ becomes holomorphic coordinates and $\{ \varepsilon_1, \ldots , \varepsilon_N\}$ becomes a tangent frame, where $z^i=\frac{Z_i}{Z_0}$ and $\varepsilon_i = \frac{\partial}{\partial z^i}$. As we have seen in (\ref{metric}) and (\ref{2}), under the frame $\varepsilon$, the entries of the matrices of Fubini-Study metric and Levi-Civita connection are
$$ g_{i\overline{j}} = \frac{1}{1+|z|^2} \delta_{ij} - \frac{1}{(1+|z|^2)^2} \overline{z^i} z^j,\ \  \theta_{ij} = -\frac{1}{1+|z|^2} \big( \delta_{ij} \sum_{k=1}^N \overline{z^k} dz^k + \overline{z^i} dz^j \big) ,  \ \ 1\leq i,j\leq N.$$
In our setting, $P$ is defined in $U$ by $z^{\alpha}=0$, $n+1\leq \alpha \leq N$, and $\{\varepsilon_{1}, \ldots , \varepsilon_n \}$ forms a tangent frame of $P$ and  $\{ \varepsilon_{n+1}, \ldots , \varepsilon_N\}$ forms a unitary normal frame of $P$. For any $n+1\leq \alpha \leq m$, any $m<\alpha' \leq N$, and any $1\leq i,j\leq n$, we have
$$ \theta_{\alpha \alpha'}(\varepsilon_j) = \theta_{i\alpha'}(\varepsilon_j) = 0.$$
So the bundle $TQ=\text{span}\{ \varepsilon_1, \ldots , \varepsilon_m\}$ is parallel along $P$.
\end{proof}

Since $Q$ is a linear space in ${\mathbb P}^N$, we have  $\widetilde{T}_yQ=Q$ for any $y\in Q$.

\subsection{The completion of the proof}
Now we are ready to finish the proof of Theorem \ref{thm1.4}.

\begin{proof}
Let $X\subset {\mathbb P}^N$ be a non-degenerate embedding associated to $L$, such that $n\geq 5$, $N=n+4$, and $\sigma_4(X,L)=0$.  Assume that the rank $l(x)$ of the second fundamental form of $X$ at a generic point $x$ is equal to $4$. Then an open dense subset $X'\subset X$ admits a holomorphic foliation $F$.  Let $Y$ be the leaf of $F$ passing through $x$. For any $y\in Y$, denote by $P_y$ the linear subspace in ${\mathbb P}^N$ passing through $y$ and containing the directions $\{ e_3, \ldots , e_n, \tilde{e}_N \}$ at $y$, which is just $\widetilde{T}_y^{(2)}Y$ by (\ref{eNtilde}) and (\ref{T2Y}).

By Lemma \ref{constantT2}, we know that  $P_y=P_x=P$ with $P\cong {\mathbb P}^{n-1}$ either for all $y\in Y$, or for all $y\in Y_1$, a smooth hypersurface in $Y$ which is a generic fiber of the second Gauss map of $Y$.

Write $E_y=\text{Span}\{ e_1, \ldots , e_n, e_N\}$ and $E^{\perp}_y = \text{Span}\{ e_{n+1}, e_{n+2}, e_{n+3}\}$ for bundles over $P$.

First consider the case when $\tilde{\theta}_{N\alpha} =0$ for all $n+1\leq \alpha \leq n+3$. In this case, since $\tilde{\theta}_{1\alpha} = \tilde{\theta}_{2\alpha} = \tilde{\theta}_{N\alpha} =0$, we see that $E^{\perp}$, hence $E$, is parallel along $Y$.

Now let $Q\cong {\mathbb P}^{n+1}$ be the linear subspace in ${\mathbb P}^N$ passing through $x$ containing the directions $\{ e_1, \ldots , e_n, e_N\}$ at $x$. Then $P\subset Q$ and so $TQ$ is parallel along $P$ by Lemma \ref{obvious lemma}.

By Lemma \ref{constantT2}, either $Y\subset P$ or $Y_1\subset P$, and in the latter case $Y$ cannot be contained in $P$ in any small neighborhood of $x$.

Let $Y'$ be the irreducible component of $P\cap X$ passing through $x$. Then $Y'=\overline{Y}$ in the  first case while $Y'=\overline{Y_1}$ in the second case. Suppose we are in the first case. Since $TQ$ is parallel on $P$, it is also parallel on $Y$. Now both $E$ and $TQ$ are parallel bundles over $Y$, and $E_x=T_xQ$. So for any $y\in Y$, we have $E_y=T_yQ$, hence
$T_yX \subset E_y = T_yQ$, which leads to $\widetilde{T}_yX \subset \widetilde{T}_yQ = Q$ for all $y\in Y$ hence for all $y\in Y'$ by taking the limit. Therefore,  $T^{\ast }(Y', X)\subset Q$, so by by Lemma \ref{relative} we get $S(Y', X)\subset Q$ hence $X\subset Q$, a contradiction to the assumption that $X\subset {\mathbb P}^N$ is non-degenerate. In the second case we get the same contradiction by using $Y_1$ instead of $Y$.

Next let us assume that not all $\tilde{\theta}_{N\alpha} =0$. By the discussion right after the proof of Lemma \ref{foliation}, we know that $Y$ is foliated by holomorphic hypersurfaces defined by $\psi=0$. Let $Z$ be such a hypersurface and consider $\gamma^{(2)}|_Z$, the restriction on $Z$ of the second Gauss map of $Y$. It will once again has image of dimension either $0$ or $1$. Note that $E^{\perp}$, hence $E$, is parallel along $Z$, hence the same argument as before on $Z$ or $Z_1$, a generic fiber of $\gamma^{(2)}|_Z$, would lead to a contradiction to the non-degenerateness of $X$. This shows that the assumption $l(x)=4$ actually cannot occur when $n\geq 5$, so $r_L\leq l(x)\leq 3$ and we have completed the proof of Theorem \ref{thm1.4}.
\end{proof}

\appendix

\section{Proof of Lemma \ref{codim2}}\label{proof of codim2}

First let us recall and fix some notations, which shall be used throughout this and the next two appendices. Let $V\cong {\mathbb C}^n$ be a complex vector space, ${\mathcal H}=\{ H^1, \ldots , H^r\}$ be a set of  quadratic (i.e., symmetric bilinear) forms on $V$. For $u\in V$, denote by $H^i_u:=H^i(u, \cdot)$ the element in the dual space $V^{\ast}$, and by $\ker (H^i):=\{ u\in V \mid H^i_u=0\}$ the kernel of $H^i$. This set ${\mathcal H}$ is said to satisfy the {\em non-degeneracy condition} if $\,\bigcap_{i=1}^r\ker (H^i)=0$, and when $r\geq k$, to satisfy the {\em width-$k$ condition} if
$H^{i_1}_u, \cdots, H^{i_k}_u$  are linearly dependent
in $V^{\ast}$ for any $u\in V$ and any $1\leq i_1< \cdots < i_k\leq r$.

Under any basis  $\{ e_1, \ldots , e_n\}$ of $V$, each $H^i$ is represented by a symmetric $n\times n$ matrix, which we still denote by $H^i$ when there is no danger of confusion. In this situation, all the relevant notations above are compatible with those in Definition \ref{def}. 

Let $W={\mathbb C}^r$, and write $H^w=\sum_{i=1}^r c_i H^i$ for any $w=(c_1, \ldots , c_r)$. It is easy to see that the non-degeneracy and width-$k$ conditions are invariant when replacing ${\mathcal H}$ by another basis of the span of these $r$ quadratic forms. In what follows we will call this a {\em scrambling} of these $H^i$. Note that for any fixed $H^w$, we can always choose a suitable basis of $V$ so that $H^w$ is represented by the matrix
\be \label{diag}
H^w = \left[ \begin{array}{ll} I_p & 0 \\ 0 & 0\end{array} \right],\ \ \  p=\text{rank}(H^{w}).
\ee

We begin with the following
\begin{lemma} \label{hyperplane}
Let $H$ be a quadratic form on $V\cong {\mathbb C}^n$. For any hyperplane  $V'$  in $V$, denote by  $\widetilde{H}:=H|_{V'\times V'}$ the restriction. Then for generic choice of a hyperplane $V'$,  it holds that $\ker (\tilde{H}) = \ker (H) \cap V'$.
\end{lemma}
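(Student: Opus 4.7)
The inclusion $\ker(H)\cap V' \subset \ker(\widetilde{H})$ is automatic, so my entire plan concerns the reverse inclusion. The idea is to rephrase membership in $\ker(\widetilde{H})$ through the linear correlation $T\colon V\to V^{\ast}$, $u\mapsto H_u$, whose kernel is $\ker(H)$, and then show that the ``excess'' contribution to $\ker(\widetilde{H})$ only arises for hyperplanes in a proper subvariety of the parameter space.

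Writing $V'=\ker(\phi)$ for some nonzero $\phi\in V^{\ast}$, the annihilator of $V'$ is the line $\mathbb{C}\phi$, so the condition $u\in\ker(\widetilde{H})$ is equivalent to $\phi(u)=0$ together with $T(u)\in\mathbb{C}\phi$. The sub-case $T(u)=0$ contributes exactly $\ker(H)\cap V'$, so the only obstruction to equality is the possible existence of a \emph{bad} element $u$ with $T(u)=\lambda\phi$ for some $\lambda\neq 0$. For such a $u$ one immediately has $H(u,u)=T(u)(u)=\lambda\phi(u)=0$, so $[u]$ lies on the projective quadric
\[ Q := \{\,[u]\in\mathbb{P}(V)\ :\ H(u,u)=0\,\}, \]
and the bad hyperplane itself is recovered as $[\phi]=[T(u)]$.

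The plan then reduces to a dimension count. Every bad $V'$ lies in the image of the regular map
\[ \psi\colon Q\setminus\mathbb{P}(\ker H)\longrightarrow\mathbb{P}(V^{\ast}),\qquad [u]\longmapsto[T(u)]. \]
Excluding the trivial case $H\equiv 0$, polarization (in characteristic zero) forces $H(u,u)$ to be a nonzero polynomial, so $Q$ is a genuine projective hypersurface of dimension $n-2$. Consequently the image of $\psi$ has dimension at most $n-2$, which is strictly less than $\dim\mathbb{P}(V^{\ast})=n-1$. Hence the locus of bad hyperplanes is contained in a proper Zariski-closed subset of $\mathbb{P}(V^{\ast})$, and any $V'$ in its complement satisfies $\ker(\widetilde{H})=\ker(H)\cap V'$.

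The argument is essentially a one-line dimension count once the reformulation is set up, and I do not foresee a real obstacle. The only point requiring a touch of care is the degenerate case $\operatorname{rank}(H)<n$: one must check that $Q$ still has its expected dimension $n-2$ (which is exactly the content of the polarization remark) and that $\psi$ descends to a well-defined map on the projective level (which is immediate from the linearity of $T$ after removing $\mathbb{P}(\ker H)$).
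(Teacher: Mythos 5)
Your proof is correct, and it takes a genuinely different route from the paper's. The paper's argument is computational: it picks a basis putting $H$ in the block form $\operatorname{diag}(I_p,0)$, writes $\widetilde H = B B^t$ where $B$ is the left $(n-1)\times p$ block of the matrix whose rows span $V'$, and then argues directly that $BB^t$ has maximal rank $p$ for generic $B$, which forces $\dim\ker\widetilde H = n-1-p = \dim(\ker H\cap V')$. Your argument instead reformulates everything through the correlation $T\colon V\to V^\ast$: a hyperplane $V'=\ker\phi$ is ``bad'' exactly when $[\phi]$ lies in the image of the map $\psi\colon Q\setminus\mathbb P(\ker H)\to\mathbb P(V^\ast)$, $[u]\mapsto[T(u)]$, where $Q$ is the projective quadric $\{H(u,u)=0\}$ --- and the equivalence you spell out (both directions of $\phi(u)=0 \Leftrightarrow H(u,u)=0$ given $T(u)\in\mathbb C^\ast\phi$) is exactly right. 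The conclusion then falls out of the fact that $\dim Q = n-2 < n-1 = \dim\mathbb P(V^\ast)$, using polarization in characteristic zero to guarantee $Q$ is a proper hypersurface when $H\neq 0$. What the paper's route buys is elementary self-containedness (no appeal to dimension theory of constructible images, no projective quadrics); what yours buys is conceptual clarity --- it identifies the bad locus intrinsically as a linearly parametrized family over the quadric, and the coordinate-free framing makes the genericity statement transparent rather than the outcome of a rank computation. Both are complete proofs.
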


\begin{proof}
Clearly, we always have $\ker (\widetilde{H}) \supset  \ker (H) \cap V'$ for any hyperplane $V'$. It suffices to show that \be\label{6}\dim\ker (\widetilde{H})\leq\dim\big(\ker (H) \cap V'\big),\ \ \text{for generic $V'$}.\ee
 To see this, let us take a basis $\{e_i\}$ of $V$ so that $H$ is in the block diagonal form (\ref{diag}) under $\{e_i\}$. Suppose that $V'$ is spanned by $n-1$ vectors $v_i=\sum_{j=1}^n A_{ij}e_j$, $1\leq i\leq n-1$. Then under the basis $\{v_i\}$ of $V'$, $\widetilde{H}$ is represented by the matrix $AH\, A^t=B\,B^t$, where we wrote $A=(B,C)$ and $B$ is the left $(n-1)\times p$ block of the $(n-1)\times n$ matrix $A$, where $p=\text{rank}(H)$.

If $p=n$, then $B=A$ and $\widetilde{H}=A\, A^t$. In this case the equality in the lemma holds when $\widetilde{H}$ is non-degenerate, or equivalently when the matrix $A\, A^t$ is non-degenerate. This is clearly the case for generic choice of $A$.

If $p<n$, then for generic choice of $A$, the $(n-1)\times (n-1)$ matrix $BB^t$ will have rank $p$, which is the maximum possible value. So $\ker(\widetilde{H})$ has dimension $n-1-p$, while the dimension of $\ker (H)\cap V'$ is at least $n-p-1$. This completes (\ref{6}) and thus the proof of the lemma.
\end{proof}

\begin{proof}[{\bf Proof of Lemma \ref{codim2}.}]
Assume that the conclusion fails. Then there exist integers $\hat{n}\geq 4$, $n\geq3$ and a linearly independent set $\{Q^1, Q^2, Q^3\}$ of quadratic forms on some $\hat{V} \cong {\mathbb C}^{\hat{n}}$ such that $Q^i$ satisfy the width-$3$ condition and their common kernel $K=\bigcap_{i=1}^3 \ker (Q^i)$ has codimension $n$.

Let $V\cong {\mathbb C}^n$ be a linear subspace such that $\hat{V}=V\oplus K$, and let $H^i$ be the restriction of $Q^i$ on $V$.
Then it is easy to see that the quadratic forms $H^1, H^2, H^3$ on $V\cong {\mathbb C}^n$ satisfy the the width-$3$ condition as well as the non-degeneracy condition. Let us assume that $n\geq 3$ is the \emph{smallest} such numbers, namely, such a set does not exist on any $V$ with dimension between $3$ and $n-1$. We want to derive at a contradiction.

We will divide the discussion into two cases, depending on $n>3$ or $n=3$.

{\bf Case 1: $n>3$.}

In this case, let us choose a \emph{generic} hyperplane $V'\cong {\mathbb C}^{n-1}$ in $V$, and consider the set of restriction quadratic forms $\{ \widetilde{H}^1, \widetilde{H}^2, \widetilde{H}^3\}$ on $V'$. It clearly satisfies the width-$3$ condition, and by Lemma \ref{hyperplane}
$$ \bigcap_{i=1}^3 \ker (\widetilde{H}^i) = \bigcap_{i=1}^3 (\ker (H^i)\cap V') = (\bigcap_{i=1}^3 \ker (H^i)) \cap V' = 0,$$
so it satisfy the non-degeneracy condition as well. Since $n-1\geq 3$, by our assumption on the \emph{minimality} of $n$,  the set $\{ \widetilde{H}^1, \widetilde{H}^2, \widetilde{H}^3\}$ must be linearly \emph{dependent}. Replacing $\{H^1, H^2, H^3\}$ by another basis of the spanning space if necessary, we may assume that $\widetilde{H}^1=0$. Choosing a basis $\{e_i\}$ of $V$ so that $V'$ is spanned by $\{ e_2, \ldots , e_n\}$, we have
\be \label{threematrix}
H^1= \left[ \begin{array}{ll} \lambda & x^t \\ x & 0 \end{array} \right], \ \ \ \ H^2= \left[ \begin{array}{ll} a & y^t \\ y & A \end{array} \right], \ \ \ \ H^3= \left[ \begin{array}{ll} b & z^t \\ z & B \end{array} \right],
\ee
where $x$, $y$, $z$ are column vectors in ${\mathbb C}^{n-1}$ and $A$, $B$ are $(n-1)\times (n-1)$ matrices. To streamline the writings, let us divide the discussion into three subcases, depending on the vanishing of $\lambda$ and $a$, $b$.

{\bf Subcase 1a: $\lambda \neq 0$.}

In this case, if we replace $ H^i$ by the new basis $\{ H^1, H^2-\frac{a}{\lambda}H^1, H^3-\frac{b}{\lambda}H^1\}$, which we shall frequently call a scrambling of $H^i$, we may \emph{assume that $a=b=0$}. For any column vector $u \in V\cong\mathbb{C}^n$ in the form $u=\binom{t}{v}$ where $v\in V'$,  we have
\be\label{threecolumn}
H^1_u= \left[ \begin{array}{c} \lambda t + \langle x,v\rangle \\ tx \end{array} \right], \ \ \ H^2_u= \left[ \begin{array}{c} \langle y,v\rangle \\ ty + A_v \end{array} \right], \ \ \ H^3_u= \left[ \begin{array}{c}  \langle z,v\rangle \\ tz + B_v \end{array} \right]
\ee
where $\langle x,v\rangle$ means the usual dot product. Their wedge product is zero by the width-$k$ condition. The components containing $e_1$ give us
\be \label{eq:threewedge}
(\lambda t+\langle x,v\rangle ) (ty+A_v)\wedge (tz+B_v) - \langle y,v\rangle tx \wedge (tz+B_v) + \langle z,v\rangle tx \wedge (ty+A_v) = 0.
\ee
This is a cubic polynomial in $t$.
By looking at the $t^3$ term, we get
$$ \lambda y\wedge z=0. $$
So \emph{$y$ and $z$ are parallel}. If both of them are zero, then (\ref{eq:threewedge}) becomes
$$ (\lambda t+ \langle x, v\rangle ) A_v \wedge B_v =0.$$
This implies that $A_v\wedge B_v=0$ for any $v\in V'$. By Lemma \ref{codim1}, we know that $A$ is proportional to $B$, so $\{ H^2, H^3\}$ is linearly dependent (recall that we have assumed that $a=b=0$!), a contradiction. Thus $y$ and $z$ cannot be both zero. Without loss of generality, let us \emph{assume that $y\neq 0$}. Then $z=c\,y$ for some constant $c$. Replace $H^3$ by $H^3-cH^2$, we may \emph{assume that $z=0$}. The equation (\ref{eq:threewedge}) now takes the form
 \be \label{eq:1}
(\lambda t+\langle x,v\rangle ) (ty+A_v)\wedge B_v - \langle y,v\rangle tx \wedge B_v = 0.
\ee
Letting $t=0$, we get
$$  \langle x,v\rangle A_v\wedge B_v=0$$
for any $v\in V'$. If $x\neq 0$, then for generic $v\in V'$, $\langle x,v\rangle \neq 0$, so we know that $A_v\wedge B_v=0$ for generic hence all $v\in V'$. On the other hand, if $x=0$, then (\ref{eq:1}) becomes
$$ t^2 \lambda  y\wedge B_v + t \lambda A_v\wedge B_v =0,$$
so again we have $A_v\wedge B_v=0$. Thus $A$ and $B$ are proportional by Lemma \ref{codim1}. \emph{We have $B\neq 0$} since we have assumed that $b=0$ and $z=0$, and so $A=cB$ for some constant $c$. Replacing $H^2$ by $H^2-cH^3$, we may \emph{assume that $A=0$}. Now (\ref{eq:1}) simply means
$$ y\wedge B_v=0, \ \ \ \langle y,v\rangle x\wedge B_v=0 $$
for all $v\in V'$. Since $y\neq 0$ and $B\neq 0$, as otherwise $H^2$ or $H^3$ would be zero, the first equation above implies that $B$ is a constant multiple of the rank one matrix $y\,y^t$, while the second equation implies that $x=c\,y$ for some constant $c$. With all these assumptions (\ref{threematrix}) now reduces to
\be\label{newthreematrix}
H^1= \left[ \begin{array}{ll} \lambda & (cy)^t \\ cy & 0 \end{array} \right], \ \ \ \ H^2= \left[ \begin{array}{ll} 0 & y^t \\ y & 0 \end{array} \right], \ \ \ \ H^3= \left[ \begin{array}{ll} 0 & 0 \\ 0 & y\,y^t \end{array} \right].
\ee
Now take any $0\neq v_0\in V'$ with $\langle v_0,y\rangle =0$, we know that $v_0$ lies in the common kernel of all three $H^i$ in (\ref{newthreematrix}), a contradiction. This completes the argument for this subcase.

{\bf Subcase 1b: $\lambda = 0$, $(a,b)\neq (0,0)$.}

Without loss of generality, let us assume that $b=1$. Replace $H^2$ by $H^2-aH^3$, we may \emph{assume that $a=0$}. The three column vectors (\ref{threecolumn}) now become
\be\nonumber
H^1_u= \left[ \begin{array}{c}  \langle x,v\rangle \\ tx \end{array} \right], \ \ \ H^2_u= \left[ \begin{array}{c} \langle y,v\rangle \\ ty + A_v \end{array} \right], \ \ \ H^3_u= \left[ \begin{array}{c}  t+\langle z,v\rangle \\ tz + B_v \end{array} \right] ,
\ee
and the  components containing $e_1$ in their wedge product give us
\be \label{eq:threewedge2}
\langle x,v\rangle  (ty+A_v)\wedge (tz+B_v) - \langle y,v\rangle tx \wedge (tz+B_v) + (t+\langle z,v\rangle )tx \wedge (ty+A_v) = 0.
\ee
By looking at the highest and lowest order terms in $t$, we get
$$ x\wedge y=0, \ \ \ \ \ \ \langle x,v\rangle A_v\wedge B_v=0.$$
Note that $x\neq 0$ since $H^1\neq 0$, so the above equations lead to $y=c\,x$ for some constant $c$, and $A_v\wedge B_v=0$ for any generic hence for all $v\in V'$. Thus $A$ and $B$ are proportional to each other, still by Lemma \ref{codim1}. Replacing $H^2$ by $H^2-cH^1$, we may \emph{assume that $y=0$}. This implies that $A\neq 0$. So $B=c'A$ for some constant $c'$. Replacing $H^3$ by $H^3-c'H^2$, we may \emph{assume that $B=0$}. Now (\ref{eq:threewedge2}) gives us
$$ \langle x,v\rangle \,z\wedge A_v=0, \ \ \ \ \ x\wedge A_v=0$$
for any $v\in V'$. This means $A=c \,x\,x^t$ and $z=c'x$ for some constants $c$, $c'$. Once again it will lead to the non-triviality of the common kernel of the three matrices $H^i$.

{\bf Subcase 1c: $\lambda = a=b=0$.}

In this case the three column vectors become
$$
H^1_u= \left[ \begin{array}{c}  \langle x,v\rangle \\ tx \end{array} \right], \ \ \ H^2_u= \left[ \begin{array}{c} \langle y,v\rangle \\ ty + A_v \end{array} \right], \ \ \ H^3_u= \left[ \begin{array}{c}  \langle z,v\rangle \\ tz + B_v \end{array} \right] ,
$$
and the  components containing $e_1$ in their wedge product give us
\be \label{eq:2}
\langle x,v\rangle  (ty+A_v)\wedge (tz+B_v) - \langle y,v\rangle tx \wedge (tz+B_v) + \langle z,v\rangle tx \wedge (ty+A_v) = 0.
\ee
Let $t=0$, we get $\langle x, v\rangle A_v\wedge B_v=0$. Since $x\neq 0$ otherwise $H^1=0$, for generic hence all $v$, $A_v\wedge B_v=0$. Thus $A$ and $B$ are proportional. By scrambling  $\{H^2, H^3\}$, we may \emph{assume that $A=0$}. This will force \emph{$x$ and $y$ to be linearly independent} because $\{H^1,H^2\}$ is so. Since the dimension of $V$ is $n\geq4$ by our assumption, the vanishing in the $V'$ part of the wedge product of the three column vectors gives us
\be \label{eq:3}
 tx \wedge ty \wedge (tz+B_v) = 0.
 \ee
So $x\wedge y\wedge z=0$. Hence $z=c \,x + c' y$ for some constants $c$ and  $c'$ as $x$ and $y$ are linearly independent. Replacing $H^3$ by $H^3-cH^1-c'H^2$, we may \emph{assume that $z=0$}. By (\ref{eq:3}), we also have $x\wedge y\wedge B_v=0$ for all $v$. Thus $B$ must live in the plane spanned by $x$ and $y$, that is,
$$ B = \alpha \,x\,x^t + \beta (x\,y^t  + y\,x^t) + \gamma \,y\,y^t   $$
for some constants $\alpha$, $\beta$, $\gamma$. Equation (\ref{eq:2}) now takes the form
$$ (\langle x,v\rangle  y- \langle y,v\rangle x )\wedge B_v = 0.
$$
Combining the above two lines, we get $Q(v) x\wedge y =0$, where
$$ Q(v) = \alpha \langle x,v\rangle^2 + 2\beta \langle x,v\rangle \langle y,v\rangle + \gamma \langle y,v\rangle ^2.$$
Since $B\neq 0$ as otherwise $H^3=0$, the three constants $\alpha$, $\beta$, $\gamma$ cannot be all zero, thus $Q$ cannot be identically zero, a contradiction. This completes the proof of the case when $n>3$.

{\bf Case 2: $n=3$.}

Without loss of generality, let us assume that $H^1$ has rank $p$, and $p$ is the \emph{smallest} for all $H^{w}$. Clearly $p$ is either $1$ or $2$ here. First let us assume that $p=1$. Write the three matrices as
$$
H^1= \left[ \begin{array}{ll} 1 & 0 \\ 0 & 0 \end{array} \right], \ \ \ \ H^2= \left[ \begin{array}{ll} 0 & x^t \\ x & A \end{array} \right], \ \ \ \ H^3= \left[ \begin{array}{ll} 0 & y^t \\ y & B \end{array} \right],
$$
where $x$, $y$ are column vectors in ${\mathbb C}^2$ and $A$, $B$ are $2\times 2$ matrices. For $i=2$ and $3$, we have replaced $H^i$ by $H^i-(H^i_{11})H^1$ to ensure that their $(1,1)$-th entries are both zero. For column vector $u \in {\mathbb C}^3$ in the form $u = \binom{t}{v}$, we have
$$
H^1_u= \left[ \begin{array}{c}  t  \\ 0 \end{array} \right], \ \ \ H^2_u= \left[ \begin{array}{c} \langle x,v\rangle \\ tx + A_v \end{array} \right], \ \ \ H^3_u= \left[ \begin{array}{c}  \langle y,v\rangle \\ ty + B_v \end{array} \right]
$$
The width-$3$ condition says that $(tx+A_v)\wedge (ty+B_v)=0$ for any $t\in {\mathbb C}$ and any $v\in {\mathbb C}^2$. Thus
\be \label{eq:4}
x\wedge y=0, \ \ \ A_v\wedge B_v=0, \ \ \ x\wedge B_v=y\wedge A_v.
\ee
By Lemma \ref{codim1}, $A$ and $B$ are proportional to each other. So after scrambling $\{ H^2, H^3\}$ we may \emph{assume that $A=0$}. Thus $x\neq 0$, hence $y=c\,x$ for some constant $c$. So replace $H^3$ by $H^3-cH^2$, we may \emph{assume $y=0$}. Hence $B\neq 0$.

Now by the thrid equation of (\ref{eq:4}) we get $x\wedge B_v=0$ for any $v$, so $B = c \, x\,x^t$ for some constant $c$. Taking $0\neq v_0\in {\mathbb C}^2$ with $\langle x, v_0\rangle =0$, we know that $v_0$ lies in the common kernel of all three $H^i$, a contradiction.

Next let us assume that $p=2$. Now we may take a basis of $V$ so that the three matrices are in the form
$$
H^1= \left[ \begin{array}{ll} 0 & 0 \\ 0 & I_2 \end{array} \right], \ \ \ \ H^2= \left[ \begin{array}{ll} 0 & ^t\!x \\ x & A \end{array} \right], \ \ \ \ H^3= \left[ \begin{array}{ll} b & ^t\!y \\ y & B \end{array} \right].
$$
Here we scrambled $\{H^2, H^3\}$ to ensure that $H^2_{11}=0$. For $u\in {\mathbb C}^3$ in the form $u =\binom{t}{v}$, we have
$$
H^1_u= \left[ \begin{array}{c} 0  \\ v \end{array} \right], \ \ \ H^2_u= \left[ \begin{array}{c} \langle x,v\rangle \\ tx + A_v \end{array} \right], \ \ \ H^3_u= \left[ \begin{array}{c}  bt+\langle y,v\rangle \\ ty + B_v \end{array} \right] .
$$
We have
$$
 \big[\langle x,v\rangle (ty+B_v)- (bt+ \langle y,v\rangle )\, (tx+A_v) \big] \wedge v = 0.
$$
Hence
\be \label{eq:5}
b x\wedge v=0,\ \ \big(\langle x,v\rangle B_v-\langle y,v\rangle A_v\big)\wedge v =0, \ \ \big(\langle x,v\rangle y-\langle y,v\rangle x + bA_v\big)\wedge v =0.
 \ee
If $b\neq 0$, then $x\wedge v=0$ for all $v\in {\mathbb C}^2$ will force $x=0$. In this case, the only non-trivial part of the matrix $H^2-cH^1$ is the $2\times 2$ block $A-cI_2$. When $c$ equals to an eigenvalue of $A$, this matrix has rank $1$, contradicting with our assumption of the minimal rank $p=2$.  So we must have $b=0$. The same argument shows that $x$, $y$ cannot be zero, and they must be linearly independent for the same reason, as otherwise some $H^w$ will have rank $1$.

Let $V^0\subset {\mathbb C}^2$ be the open dense subset consisting of $v$ such that $\langle x,v\rangle $ and $\langle y,v\rangle $ are not both zero. From the third equation of (\ref{eq:5}), we know that for any $v\in V^0$ there will be a unique constant $c(v)$ such that
$$ v = c(v) \big( \langle x,v\rangle y-\langle y,v\rangle x\big). $$
Taking the dot product with $v$,
we get that $\langle v, v\rangle =0$ for all $v\in V^0$, which is absurd. This completes the proof of Case 2 and Lemma \ref{codim2}.
\end{proof}

\section{Proof of Lemma \ref{codim3}}\label{proof of codim3}

In this section, we will prove Lemma \ref{codim3}.  Throughout it, we will assume that $n\geq 4$ and  ${\mathcal H}=\{ H^1, \ldots , H^r\}$ is a set of symmetric $n\times n$ matrices, satisfying the non-degeneracy condition  and the width-$4$ condition. We will call such a ${\mathcal H}$ simply as a {\em system}.

As before, we will write $H^w=\sum_{i=1}^r a_i H^i$ for $w=(a_1, \ldots , a_r)\in W\cong{\mathbb C}^r$, and replace ${\mathcal H}$ by another more convenient basis from time to time, and will call this a {\em scramble} of ${\mathcal H}$.

\begin{definition}
The system ${\mathcal H}$ is said to be {\em special}, if there exists a linearly independent set $\{ w_1, w_2, w_3\}$ in $W$ such that $\bigcap_{i=1}^3 \ker (H^{w_i})$ is $(n-2)$-dimensional.
\end{definition}

In these terminologies, Lemma \ref{codim3} simply says that, given a system ${\mathcal H}$, then $l:=\text{rank}({\mathcal H})\leq 4$, and if $l=4$ and $n\geq 5$, then the system ${\mathcal H}$ is special. Equivalently, we can rephrase this as, (a) any special system ${\mathcal H}$ has $l\leq 4$;  (b) any non-special system must have $l\leq 4$, and $l=4$ only when $n=4$.

\begin{lemma}
Let ${\mathcal H}$ be a special system, then $l\leq 4$.
\end{lemma}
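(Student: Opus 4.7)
The plan is to normalize the system using the specialness assumption, extract linear constraints on the non-special matrices via the width-$4$ condition applied to two specials plus two non-specials, and close by combining the resulting cross-term identity with the non-degeneracy condition.

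For the normalization, I would scramble $\mathcal{H}$ so that the three linearly independent specials are $H^1,H^2,H^3$, with common kernel $K$ of dimension $n-2$. Choosing a basis of $V$ with $K=\mathrm{span}\{e_3,\ldots,e_n\}$ puts them in block form $H^i=\begin{pmatrix}A^i & 0\\ 0 & 0\end{pmatrix}$, where $A^1,A^2,A^3$ are linearly independent symmetric $2\times 2$ matrices. Since $\dim\mathrm{Sym}^2(\mathbb{C}^2)=3$, these in fact span the entire space of symmetric $2\times 2$ matrices, so I can further scramble each $H^j$ ($j\geq 4$) by subtracting a linear combination of the specials and arrange that its top-left block vanishes: $H^j=\begin{pmatrix}0 & C^j\\ (C^j)^t & D^j\end{pmatrix}$, with $C^j$ of size $2\times (n-2)$ and $D^j$ symmetric of size $(n-2)\times(n-2)$.

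The main computation is the $4$-wedge $H^1u\wedge H^2u\wedge H^au\wedge H^bu$ at a general $u=(p,v)^t\in\mathbb{C}^2\oplus\mathbb{C}^{n-2}$. Because $H^1u$ and $H^2u$ both live in the first $\mathbb{C}^2$ factor and that factor admits no nonzero $3$-forms, everything collapses to a single surviving term equal to $(A^1p\wedge A^2p)\otimes(r^a\wedge r^b)$ with $r^j:=(C^j)^tp+D^jv$. The first factor is nonzero at generic $p$ by linear independence of $A^1,A^2$, so the width-$4$ condition reduces to $r^a\wedge r^b\equiv 0$ as a polynomial in $(p,v)$; separating by bidegrees yields
\begin{equation*}
(\mathrm{i})\ (C^a)^tp\wedge(C^b)^tp=0,\quad (\mathrm{ii})\ (C^a)^tp\wedge D^bv=(C^b)^tp\wedge D^av,\quad (\mathrm{iii})\ D^av\wedge D^bv=0.
\end{equation*}
Applying Lemma \ref{codim1} to (iii) makes all $D^j$ proportional to a single matrix $D$ (so $D^j=\mu^jD$), and unpacking (i) produces a dichotomy: either (A) some $(C^{j_0})^t$ has rank $2$, which forces $C^j=\lambda^jC^{j_0}$ for every $j$, or (B) all $(C^j)^t$ have rank $\leq 1$ with a common image line $\mathbb{C}y\subset\mathbb{C}^{n-2}$, so $C^j=\sigma^jy^t$ for some $\sigma^j\in\mathbb{C}^2$.

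To close each case I would feed this back into (ii). In Case (A), (ii) collapses to $(\lambda^a\mu^b-\lambda^b\mu^a)(C^{j_0})^tp\wedge Dv=0$; whenever $D\neq 0$ the rank-$2$ assumption on $(C^{j_0})^t$ produces $p,v$ with $(C^{j_0})^tp\wedge Dv\neq 0$, so all pairs $(\lambda^j,\mu^j)$ are proportional and the non-specials span at most one dimension, giving $l\leq 4$. In Case (B), (ii) reads $(\mu^b\langle\sigma^a,p\rangle-\mu^a\langle\sigma^b,p\rangle)\,y\wedge Dv=0$: if $\mathrm{Im}(D)\not\subset\mathbb{C}y$ the same mechanism forces $(\sigma^j,\mu^j)$ proportional and $l\leq 4$, whereas if $\mathrm{Im}(D)\subset\mathbb{C}y$ then $D=cyy^t$ for some scalar $c$ and a direct computation shows every $H^j\in\mathcal{H}$ annihilates $y^\perp\subset K$, a subspace of dimension $n-3\geq 1$ for $n\geq 4$; this contradicts non-degeneracy, ruling the sub-case out. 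The main obstacle I anticipate is this Case (B) analysis: without the non-degeneracy condition it would allow up to three extra degrees of freedom for the non-specials, and only the combined use of the cross-term identity (ii) with the non-degeneracy hypothesis collapses these to the required bound $l\leq 4$.
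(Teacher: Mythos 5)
Your proposal is correct and follows the paper's strategy: block-diagonalize so the three specials occupy the top-left $2\times 2$ corner and the remaining $H^j$ have zero corner, apply the width-$4$ condition to $H^1_u\wedge H^2_u\wedge H^a_u\wedge H^b_u$, split by bidegree in $(p,v)$, invoke Lemma~\ref{codim1} on the $D^j$ blocks, and close with either proportionality of the non-specials or a common-kernel violation of non-degeneracy. Your simultaneous treatment of all pairs $(a,b)$ together with the explicit rank dichotomy on the $(C^j)^t$ is a somewhat cleaner packaging of the paper's argument, which fixes the pair $(H^4,H^5)$, normalizes via scrambling, and then asserts the resulting kernel vector ``clearly'' annihilates the remaining $H^i$ as well.
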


\begin{proof}
By a scramble if necessary, we may assume that $H^1,\ldots,H^l$ are linear independent and given by
$$
H^1=\left[ \begin{array}{ccc} 1 & 0 & 0 \\ 0 & 0 & 0 \\ 0 & 0 & 0  \end{array} \right], \ \ \
H^2=\left[ \begin{array}{ccc} 0 & 0 & 0 \\ 0 & 1 & 0 \\ 0 & 0 & 0  \end{array} \right], \ \ \
H^3=\left[ \begin{array}{ccc} 0 & 1 & 0 \\ 1 & 0 & 0 \\ 0 & 0 & 0  \end{array} \right], \ \ \
H^i=\left[ \begin{array}{lll} 0 & 0 & x_i^t \\ 0 & 0 & y_i^t \\ x_i & y_i & A^i  \end{array} \right]
$$
with $4\leq i \leq l$,  where each $x_i$, $y_i$ is a column vector in ${\mathbb C}^{n-2}$, and each $A^i$ is an $(n-2)\times (n-2)$ symmetric matrix. The upper left $2\times 2$ corner of $H^i$ is zero because we can scramble by adding suitable combination of the first three matrices.

Assume on the contrary that $l\geq 5$. Then we may consider the set $\{ H^1, H^2, H^4, H^5\}$. To avoid too much subscripts, let us write $x_4=x$, $y_4=y$, $A^4 =A$, while $x_5=x'$, $y_5=y'$, $A^5=B$. For column vector $u$ in the form $u^t = (t,s, v^t)$, the vectors $H^i_u$ for $i\in \{ 1,2,4,5\}$ are
$$
\left[ \begin{array}{c} t \\ 0 \\ 0  \end{array} \right], \ \ \ \left[ \begin{array}{c} 0 \\ s \\ 0  \end{array} \right], \ \ \ \left[ \begin{array}{c} \langle x,v\rangle  \\ \langle y,v\rangle  \\ tx+sy+A_v  \end{array} \right], \ \ \
\left[ \begin{array}{c} \langle x',v\rangle  \\ \langle y',v\rangle  \\ tx'+sy'+B_v  \end{array} \right] .
$$
The vanishing of their wedge product leads to
$$
ts\,(tx+sy+A_v) \wedge (tx'+sy'+B_v) =0
$$
for any $t,s\in {\mathbb C}$ and any $v\in {\mathbb C}^{n-2}$. Thus
\begin{eqnarray}\label{ccc}
\left\{\begin{array}{ll}
x\wedge x' \ = \ y \wedge y' \ = \ x\wedge y' + y \wedge x' \ = \ 0 ,\\
x\wedge B_v - x'\wedge A_v \ = \ y\wedge B_v - y'\wedge A_v\  = \ 0,\\
A_v\wedge B_v \  = \ 0.
\end{array} \right.
\end{eqnarray}

The last equation in (\ref{ccc}) implies that $A$ and $B$ are proportional, so by a scramble we may \emph{assume that $A=0$}. Thus $x$ and $y$ cannot be both zero. Without loss of generality, let us \emph{assume that $x\neq 0$}.  We will fix this $H^4$ now.

By $x\wedge x'=0$, we know that $x'=c\,x$ for some constant $c$. Replace $H^5$ by $H^5-cH^4$, we may \emph{assume that $x'=0$}. Now the rest of the equations in (\ref{ccc})  take the form
$$ y\wedge y' = x \wedge y' = 0, \ \ \ x\wedge B_v = y\wedge B_v =0. $$
Therefore $y'=\lambda x$ and $B = \mu \,x\,^t\!x$  for some constants $\lambda$ and $\mu$. Since $x'=0$, we know that $y'$ and $B$ cannot be both zero, and the non-vanishing of either of them will lead to $y\wedge x=0$.

Since $n\geq 4$, we may take  $0\neq v_0\in {\mathbb C}^{n-2}$ so that $\langle x, v_0\rangle =0$. This $v_0$ will lie in the kernel of $H^4$ as well as the kernel of $H^5$. Clearly it also lies in the kernel of $H^i$ for any $i>5$. This violates the non-degeneracy condition, so $l\geq 5$ is impossible.
\end{proof}

Now let us focus on the non-special systems. Again to streamline writings, let us first consider a simpler case, where there is some $H^w$ with rank one.

\begin{lemma}
Let ${\mathcal H}$ be a non-special system that contains a rank one matrix, then $l\leq 4$ and $l=4$ only when $n=4$.
\end{lemma}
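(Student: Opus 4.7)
The plan is to exploit the rank-one matrix to collapse the width-$4$ problem on $\mathcal{H}$ into a width-$3$ problem on a system of $(n-1)\times(n-1)$ symmetric matrices, where Lemma \ref{codim2} and Corollary \ref{cor} become available.

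By a scramble, take $H^1$ to be the rank-one matrix and pick a basis $\{e_1,\ldots,e_n\}$ of $V$ so that $H^1$ has its single nonzero entry at position $(1,1)$. For each $i\geq 2$, subtract $H^i_{11}\!\cdot\!H^1$ to arrange
\[ H^i = \left[ \begin{array}{cc} 0 & y_i^t \\ y_i & A^i \end{array} \right], \qquad y_i\in\mathbb{C}^{n-1},\ A^i\ \text{symmetric $(n-1)\times(n-1)$}. \]
For $u^t=(t,v^t)$ with $v\in\mathbb{C}^{n-1}$, one computes $H^1_u=te_1$, so the width-$4$ condition on any tuple $(1,i_2,i_3,i_4)$ reduces to
\[ (ty_{i_2}+A^{i_2}_v)\wedge(ty_{i_3}+A^{i_3}_v)\wedge(ty_{i_4}+A^{i_4}_v)=0 \ \ \text{in}\ \mathbb{C}^{n-1}, \qquad \forall\,t,v. \]
Reading off the coefficients of $t^3,t^2,t,t^0$ yields four identities: crucially, (a) $\{A^i\}_{i\geq 2}$ satisfies the width-$3$ condition on $\mathbb{C}^{n-1}$, and (b) the $y_i$'s span a subspace of dimension at most $2$, together with cross-term identities (c) and (d) mixing the $y$'s and $A$'s.

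Set $\rho=\text{rank}\{A^i:i\geq 2\}$ and $K=\bigcap_{i\geq 2}\ker(A^i)\subseteq\mathbb{C}^{n-1}$. Descending to the quotient $\mathbb{C}^{n-1}/K$, on which the induced $\widetilde{A}^i$ form a non-degenerate width-$3$ system, Corollary \ref{cor} yields $\rho\leq 2$ whenever $\dim(\mathbb{C}^{n-1}/K)\geq 3$; the only remaining possibility is $\rho=3$ with $\dim K=n-3$. After scrambling within $\{H^i\}_{i\geq 2}$, I arrange $A^2,\ldots,A^{\rho+1}$ linearly independent and $A^j=0$ for $j\geq\rho+2$. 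A direct count then gives $l=1+\rho+\sigma$, where $\sigma:=\dim\text{span}\{y_j:j\geq\rho+2\}\leq 2$ by (b).

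It remains to establish $\rho+\sigma\leq 3$, with equality forcing $n=4$. I case-split on $(\rho,\sigma)$. Whenever $\sigma\geq 1$, identity (c) applied to a triple $(i,j,k)$ with $A^j=A^k=0$ forces $\text{im}(A^i)\subseteq\text{span}\{y_j:j\geq\rho+2\}$; using (b), further scrambling kills the $y_i$-components of every $A$-bearing $H^i$. A direct computation then shows $\bigcap_a\ker(H^a)$ contains $\{0\}\oplus\text{span}\{y_j:j\geq\rho+2\}^{\perp}$, of dimension $\geq n-1-\sigma$, so non-degeneracy forces $n\leq\sigma+1$. Combined with $n\geq 4$ and $\sigma\leq 2$, this rules out $\sigma=2$ with $\rho\geq 1$ and pushes $\sigma=1$ with $\rho\geq 1$ to the case $n=4$. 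The main obstacle is the degenerate scenario $\rho=3$, where the width-$3$ condition on the $A^i$'s becomes vacuous after quotienting by $K$ and the scrambling argument above stalls; here the non-special hypothesis on $\mathcal{H}$ is the crucial ingredient, used to exclude the special triple of matrices that would otherwise arise from combining the $(n-3)$-dimensional kernel $K$ with the constraints on the $y_i$'s.
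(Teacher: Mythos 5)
Your framework is sensible — using the rank-one $H^1$ to reduce to a width-$3$ system of $(n-1)\times(n-1)$ matrices $A^i$, then splitting $l = 1 + \rho + \sigma$ — and the reduction steps (Lemma~\ref{hyperplane}-style restriction, the identities (a)--(d), the bound $\sigma\le 2$) are all correct as far as they go. The trouble is that the concluding case analysis does not actually close.

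The pivotal claim, ``whenever $\sigma\ge 1$, identity (c) with $A^j=A^k=0$ forces $\mathrm{im}(A^i)\subseteq \mathrm{span}\{y_j: j\ge\rho+2\}$,'' is only valid for $\sigma=2$. To apply (c) you need two distinct indices $j,k\ge\rho+2$ (so $l-1-\rho\ge 2$, i.e.\ $\sigma\ge 2$), and you need $y_j\wedge y_k\neq 0$ (again $\sigma\ge 2$) for the resulting $y_j\wedge y_k\wedge A^i_v=0$ to say anything. For $\sigma=1$ the identity degenerates to $0=0$. Consequently the computation ``$n\le\sigma+1$'' is established only when $\sigma=2$, where it gives $n\le 3$ and correctly rules that branch out; the statement that it ``pushes $\sigma=1$ with $\rho\ge 1$ to the case $n=4$'' does not follow — if the argument applied there it would give $n\le 2$, a contradiction, and in fact it simply does not apply.

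This leaves untouched exactly the branches on which the lemma hinges: $(\rho,\sigma)=(2,1)$ and $(3,0)$ are the only $l=4$ configurations your count permits, so they are where $n=4$ must be forced; and $(3,1)$, $(3,2)$ are $l\ge 5$ configurations that must be excluded outright. You acknowledge that $\rho=3$ ``stalls'' and gesture at the non-special hypothesis, but never show how it enters; indeed that hypothesis is already needed at the very start of the analysis — if $\rho=3$ then after quotienting by $K$ you have three independent width-$3$ forms on a $2$-dimensional space, and one must trace through Lemma~\ref{codim2} / Remark~\ref{remark} to see what structure this imposes on the original $H^i$, which is nontrivial. Compare with the paper: it organizes the casework by $p=\dim\mathrm{span}\{x_2,\ldots,x_l\}\le 2$ rather than $(\rho,\sigma)$, dispatches $p=0$ by observing that the resulting $\{A^i\}$ form a linearly independent non-degenerate width-$3$ system in dimension $n-1\ge 3$, hence by Lemma~\ref{codim2} three of the original $H^i$ have an $(n-2)$-dimensional common kernel, contradicting non-specialty; then shows $p=1$ gives $l\le 3$; and in the $p=2$, $l=4$ case grinds out explicit $4\times 4$ normal forms, which is what actually forces $n=4$. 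Your proposal would need a comparably explicit analysis of the $(2,1)$, $(3,0)$, $(3,1)$, $(3,2)$ branches before it could be considered a proof.
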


\begin{proof}
Assume on the contrary that $l\geq5$.
Without loss of generality, let us assume our $l$ matrices are given as
$$
H^1=\left[ \begin{array}{cc} 1 &  \\  & 0_{n-1}  \end{array} \right], \ \ \
H^i=\left[ \begin{array}{cc} 0  & ^t\!x_i \\ x_i & A^i  \end{array} \right], \ \ \ 2\leq i\leq l,
$$
where each $x_i$ is a column vector in ${\mathbb C}^{n-1}$ and each $A^i$ a symmetric $(n-1)\times (n-1)$ matrix. Still with the column vector $u=\binom{t}{v}$ and $v\in\mathbb{C}^{n-1}$, the column vectors $H^i_u$ now take the form
 $$
H^1_u= \left[ \begin{array}{c} t \\ 0   \end{array} \right], \ \ \ H^i_u=
\left[ \begin{array}{c} \langle x_i,v\rangle    \\ tx_i+A^i_v  \end{array} \right] .
$$
We will take four matrices, $\{H^1, H^i, H^j, H^k\}$, where $2\leq i<j<k\leq l$.  The width-$4$ conditions now gives
\be \label{rankone}
(tx_i+A^i_v) \wedge (tx_j+A^j_v)  \wedge (tx_k+A^k_v)  = 0,
\ee
In particular, $x_i\wedge x_j\wedge x_k=0$, so the space $V_x$ spanned by $\{ x_2, \ldots , x_l\}$ has dimension $p\leq 2$. Note that the case $p=0$ cannot occur, as in this case all $x_i=0$, thus $\{ A^2, \ldots , A^l\}$ is linearly independent, satisfies the non-degeneracy condition, and also satisfies the width-$3$ condition as by (\ref{rankone}) we have $A^i_v\wedge A^j_v\wedge A^k_v=0$ for any $v$. This will make the system ${\mathcal H}$ to be special due to Lemma \ref{codim2}. We are left with two possibilities: $p=1$ or $p=2$.

{\em Case 1: $p=1$.}

We show that in this case $l\leq3$.

By a scramble we may assume that $x_2\neq 0$, and $x_3=\cdots = x_l=0$. If $l\geq 5$, then by  $A^3_v\wedge A^4_v \wedge A^5_v=0$ we know that ${\mathcal H}$ is special due to Lemma \ref{codim2}, a contradiction. So we must have $l\leq4$.

If $l=4$. Then $\{ A^3, A^4\}$ is linearly independent, and the equation (\ref{rankone}) becomes
\be \label{rankone2}
A^2_v\wedge A^3_v\wedge A^4_v=0, \ \ \ \ x_2\wedge A^3_v\wedge A^4_v=0.
\ee
If $\{ A^2, A^3, A^4\}$ is linearly independent, then by the first equation in (\ref{rankone2}) and Lemma \ref{codim2} we know that $\mathcal{H}$ is special, a contradiction. So $\{ A^2, A^3, A^4\}$ is linearly dependent. The second equation in (\ref{rankone2}) implies that $x_2$ belongs to that plane. So $A^2$ must be a linear combination of $A^3$ and $A^4$. By a scramble, we may \emph{assume that $A^2=0$}. The second equation in (\ref{rankone2}) implies that $(x_2\,x_2^t)_v\wedge A^3_v\wedge A^4_v=0$. So again $\{x_2\,x_2^t, A^3, A^4\}$ is linearly dependent, meaning that a linear combination of $A^3$ and $A^4$ is equal to $x_2\,x_2^t$. By a scramble, we may \emph{assume that $A^3=x_2\,x_2^t$}. Then $\{H^1, H^2, H^3\}$ makes ${\mathcal H}$ special, a contradiction to our assumption. So $l$ cannot be $4$.

{\em Case 2: $p=2$.}

We may assume that $x_2\wedge x_3\neq 0$ and $x_4= \cdots =x_l=0$. By (\ref{rankone}), we get $x_2\wedge x_3\wedge A^4_v=0$ for any $v$. This means that
$$ A^4=a \,x_2\,x_2^t +b \,(x_2\,x_3^t + x_3\,x_2^t) + c \,x_3\,x_3^t$$
for some constants $a$, $b$, $c$. For simplicity, we will denote this by $R(A^4)\subset \mbox{sp} \{x_2, x_3\}$, and say that the `range' of $A^4$ is contained in the plane spanned by $x_2$ and $x_3$. Note that this can be made precise under appropriate frames, and this loose description will not affect the correctness of the argument. If $l\geq 5$. Then $\{ A^4, A^5\}$ is linearly independent, we have $R(A^4)\subset P$ and $R(A^5)\subset P$ where $P = \mbox{sp} \{x_2, x_3\}$. By considering the wedge product equation for $\{ H^1, H^2, H^4, H^5\}$, we get $A^2_v \wedge A^4_v\wedge A^5_v=0$. If $\{ A^2,  A^4,  A^5\}$ is linearly independent, then they form $2\times 2$ system, hence $R(A^2)\subset P$. If $\{ A^2,  A^4,  A^5\}$ is linearly dependent, then $A^2$ must be a linear combination of $A^4$ and $A^5$ as the latter two are independent. So again we will have $R(A^2)\subset P$. Similarly, $R(A^3)\subset P$, and of course $R(A^i)\subset P$ for $i>5$ if any. This means that  ${\mathcal H}$ forms a $3\times 3$ system, contradicting with the non-degeneracy condition. Hence we must have $l\leq4$.

If $l=4$. By (\ref{rankone}) we also have $A^2_v \wedge A^3_v\wedge A^4_v=0$.  If $\{ A^2, A^3, A^4\}$ is linearly dependent, then after a scramble we may assume that $A^2=0$. Equation (\ref{rankone}) gives us
$$ x_2 \wedge x_3 \wedge A^4_v=0, \ \ \ x_2\wedge A^3_v\wedge A^4_v=0. $$
The first one says that $R(A^4)\subset P:=\mbox{sp}\{x_2, x_3\}$. If $A^4$ is not a multiple of $x_2\,x_2^t$, then the second equation implies that for generic, hence all $v$, we have $x_2\wedge x_3\wedge A^3_v =0$, so $R(A^3)\subset P$, thus ${\mathcal H}$ forms a $3\times 3$ system, a contradiction. On the other hand, if $A^4$ is a constant multiple of $x_2\,x_2^t$, then $\{ H^1, H^2, H^4\}$ form a $2\times 2$ system so ${\mathcal H}$ is special, a contradiction.

Therefore $\{ A^2, A^3, A^4\}$ must be linearly independent. Then the vanishing of their wedge product implies that they form a $2\times 2$ system. If $A^4$ has rank $2$ here, then $R(A^2)$ and $R(A^3)$ are contained in the plane $R(A^4)$ which is $P$, so ${\mathcal H}$ will form a $3\times 3$ system, a contradiction. So $A^4$ must have rank one. By scrambling $H^2$ and $H^3$ if necessary, we may assume that $A^4= x_2\,x_2^t$. In this case, $P$ is spanned by $x_2$ and another vector $x_4\in {\mathbb C}^{n-1}$, where $x_2\wedge x_3 \wedge x_4\neq 0$, and the non-degeneracy condition forces $n$ to be $4$ here. Under the basis $\{ e_1, x_2, x_4, x_3\}$, it is a straightforward computation that $\{ H^1, H^4, H^2, H^3\}$ is in of the following two `normal' forms:
\begin{eqnarray}
&& \Big\{ \left[ \begin{array}{cccc} 1 &  & &  \\ & 0 & &  \\  &  & 0 &  \\  &&& 0 \end{array} \right], \ \ \
\left[ \begin{array}{cccc} 0 &  & &  \\ & 1 & &  \\  &  & 0 &  \\  &&& 0 \end{array} \right], \ \ \
\left[ \begin{array}{cccc} 0 & 1 & 0 &  \\ 1& 0 & 1&  \\ 0 & 1 & 0 &  \\  &&& 0 \end{array} \right], \ \ \
\left[ \begin{array}{cccc}  &  & & 1 \\ &  & 1&  \\  & 1 &  &  \\  1 &&&  \end{array} \right] \Big\}; \\
\nonumber
 \mbox{or}
\\
&&  \Big\{ \left[ \begin{array}{cccc} 1 &  & &  \\ & 0 & &  \\  &  & 0 &  \\  &&& 0 \end{array} \right], \ \ \
\left[ \begin{array}{cccc} 0 &  & &  \\ & 1 & &  \\  &  & 0 &  \\  &&& 0 \end{array} \right], \ \ \
\left[ \begin{array}{cccc} 0 & 1 & 0 &  \\ 1& 0 &0 &  \\ 0 & 0 & 1 &  \\  &&& 0 \end{array} \right], \ \ \
\left[ \begin{array}{cccc}  &  & & 1 \\ &  0 & 0 &  \\  & 0  & 1 &  \\ 1 &&&  \end{array} \right] \Big\}.
\end{eqnarray}
In particular, $l=4$ would imply that $n=4$. This completes the proof of the lemma.
\end{proof}

After these two lemmas, now we may assume that the system ${\mathcal H}$ is not special and does not contain any rank one matrix. We want to show that $l\leq 4$, and $l=4$ only if $n=4$.  We start with the first part.

\begin{lemma} \label{partone}
Let ${\mathcal H}$ be a non-special system without any rank one matrix. Then $l\leq 4$.
\end{lemma}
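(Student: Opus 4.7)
The plan is a proof by contradiction: assume $l \geq 5$ and pick a linearly independent subset $\{H^1, \ldots, H^5\}$ of $\mathrm{span}(\mathcal{H})$. Let $p$ denote the smallest rank attained by a nonzero element of $\mathrm{span}(\mathcal{H})$; by the no rank-one hypothesis, $p \geq 2$. The analysis splits naturally into the cases $p = 2$ and $p \geq 3$.

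In the case $p = 2$, I would choose $H^1$ of rank $2$ and put it into off-diagonal form $H^1 = e_1 e_2^t + e_2 e_1^t$ by a basis change on $V$. After subtracting suitable multiples of $H^1$ from each $H^i$ (with $2 \leq i \leq 5$), their $(1,2)$-entries vanish, giving the block decomposition
$$H^i = \begin{pmatrix} \alpha_i & 0 & x_i^t \\ 0 & \gamma_i & y_i^t \\ x_i & y_i & A^i \end{pmatrix},$$
with $A^i$ symmetric of size $(n-2) \times (n-2)$, $x_i, y_i \in \mathbb{C}^{n-2}$, and $\alpha_i, \gamma_i \in \mathbb{C}$. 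For $u = (s, t, v^t)^t$ with $v \in \mathbb{C}^{n-2}$, I would write $Y_i := s x_i + t y_i + A^i v$ for the bottom $n-2$ entries of $H^i u$. Since $H^1 u = t e_1 + s e_2$ lies in the span of $e_1, e_2$, the width-$4$ identity $H^1 u \wedge H^i u \wedge H^j u \wedge H^k u = 0$ (for $\{i,j,k\} \subset \{2,3,4,5\}$) decomposes by top-index into two families: components with exactly one of $e_1, e_2$ on top give $Y_i \wedge Y_j \wedge Y_k = 0$ identically in $(s, t, v)$, while the $e_1 \wedge e_2$ component gives a linear relation among $Y_j \wedge Y_k$, $Y_i \wedge Y_k$, $Y_i \wedge Y_j$ with coefficients linear in the top entries of $H^i u$, $H^j u$, $H^k u$. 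Specializing $s = t = 0$ in the first relation yields $A^i v \wedge A^j v \wedge A^k v = 0$ for all $v$, so $\{A^2, A^3, A^4, A^5\}$ satisfies the width-$3$ condition on $\mathbb{C}^{n-2}$. Invoking Lemma \ref{codim2}, either any three of them are linearly dependent, or any three linearly independent ones share a common kernel of dimension $n - 4$. In the latter case, restricting $v$ to that common kernel collapses $Y_i$ to $s x_i + t y_i$, and matching powers of $s, t$ reduces the analysis of the $x_i, y_i$ to width-$3$ type constraints handled by Lemmas \ref{codim1} and \ref{codim2}. Iterating scrambles, each resulting subcase should terminate in one of four contradictions: linear dependence of $H^1, \ldots, H^5$; a nonzero vector in the full common kernel of $\mathcal{H}$ (violating non-degeneracy); a rank-$1$ element in $\mathrm{span}(\mathcal{H})$; or the emergence of a special triple.

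In the case $p \geq 3$, every nonzero element of $\mathrm{span}(\mathcal{H})$ has rank $\geq 3$, and I would run a dimension count on the incidence variety $I = \{(u, w) : H^w u = 0\} \subset V \times \mathrm{span}\{H^1, \ldots, H^5\}$. The width-$4$ hypothesis gives $\dim \ker \Phi_u \geq 2$ for $\Phi_u : w \mapsto H^w u$, so $\dim I \geq n + 2$; the other projection gives $\dim I \leq 5 + (n - r_{\mathrm{gen}})$ with $r_{\mathrm{gen}}$ the generic rank in $\mathrm{span}\{H^1, \ldots, H^5\}$. Hence $r_{\mathrm{gen}} \leq 3$, and combined with $p \geq 3$ this forces $r_{\mathrm{gen}} = 3$. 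Normalizing $H^1$ to have top-left block $I_3$ and zeros elsewhere, writing $u = (\tilde u^t, v^t)^t$ with $\tilde u \in \mathbb{C}^3$ and $v \in \mathbb{C}^{n-3}$, and repeating the wedge-expansion strategy on quadruples $\{H^1, H^i, H^j, H^k\}$, specialization at $\tilde u = 0$ yields that the bottom-right $(n-3) \times (n-3)$ blocks of $H^2, \ldots, H^5$ satisfy the width-$3$ condition on $\mathbb{C}^{n-3}$. For $n \geq 6$, Lemma \ref{codim2} applied to these blocks, combined with the rank-exactly-$3$ rigidity of the pencil, forces a nonzero vector in the common kernel of $\mathcal{H}$, contradicting non-degeneracy; the low-dimensional cases $n = 4, 5$ require separate ad hoc arguments exploiting the extreme rigidity of linear systems of symmetric matrices with generic rank $3$ in small dimensions.

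The main obstacle I expect is the case $p = 2$: with four matrices $H^2, \ldots, H^5$ in the reduced block decomposition rather than three as in the proof of Lemma \ref{codim2}, the subcase enumeration — organized by which of the scalars $\alpha_i, \gamma_i$ vanish, how the vectors $x_i, y_i$ sit relative to the common kernel of the $A^i$'s, and the rank of the subsystem $\{A^2, \ldots, A^5\}$ on $\mathbb{C}^{n-2}$ — ramifies considerably. Careful bookkeeping analogous to the Subcase 1a/1b/1c pattern in the proof of Lemma \ref{codim2} will be essential to ensure each branch terminates in a contradiction with non-specialness, no rank-$1$, or non-degeneracy.
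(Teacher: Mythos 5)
Your overall strategy—contradiction from $l\geq 5$, reduce the minimum rank $p$ of $\operatorname{span}(\mathcal H)$, normalize a low-rank matrix $H^1$, and expand the wedge identities—is in the same family as the paper's, but you reach the rank reduction by a genuinely different mechanism, and that is where the gap lies.

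The paper does not split into cases $p=2$ and $p\geq 3$. Instead it chooses $n$ \emph{minimal} such that a non-special, rank-one-free width-$4$ system of rank $\geq 5$ exists, and proves (their ``Claim'') that some $H^w$ in such a minimal system has rank exactly $2$: for $n=4$ by observing that the rank-$\leq 2$ locus in $\mathbb{P}(S^2\mathbb{C}^4)\cong\mathbb{P}^9$ has dimension $6$ and must meet any $\mathbb{P}^3$, and for $n\geq 5$ by restricting to a generic hyperplane and invoking minimality (together with their Lemma on hyperplane restrictions). This descent collapses the whole analysis to the situation $\operatorname{rank}(H^1)=2$, with no residual high-rank branch. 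Your incidence-variety count is an appealing substitute—for a $5$-dimensional span, $\dim\ker\Phi_u\geq 2$ does force $\dim I\geq n+2$, and comparing with the stratification of $\operatorname{span}\{H^1,\dots,H^5\}$ by rank does yield $p\leq 3$—but this is strictly weaker than the paper's conclusion $p=2$. It leaves you with a genuine new case $p=3$, where every nonzero $H^w$ has rank exactly $3$.

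That $p=3$ branch is the concrete gap. For $n=4$ it is actually vacuous, since $\mathbb{P}(\Sigma_{\leq 2})\subset\mathbb{P}^9$ has dimension $6$ and meets every $\mathbb{P}^4$ (even every $\mathbb{P}^3$, as the paper uses for its own $n=4$ subcase); you do not observe this. For $n=5$ the count no longer helps: the rank-$\leq 2$ locus in $\mathbb{P}(S^2\mathbb{C}^5)\cong\mathbb{P}^{14}$ has dimension $8$, so a $\mathbb{P}^4$ can miss it, and a $5$-dimensional constant-rank-$3$ space of symmetric $5\times 5$ matrices cannot be excluded by dimension alone. You acknowledge needing ``separate ad hoc arguments'' here, but there is no indication of what they would be; this is precisely the kind of low-$n$ phenomenon that the paper's own Lemma \ref{parttwo} at $l=4$, $n=5$ required an entire extra appendix to dispose of. Without that, the proof is not complete.

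On the $p=2$ branch your setup is sound and essentially equivalent to the paper's: your off-diagonal normalization $H^1=e_1e_2^t+e_2e_1^t$ and block form with vanishing $(1,2)$-entries is interchangeable (over $\mathbb{C}$) with the paper's $H^1=I_2\oplus 0$ and vanishing $(1,1)$-entries, and the resulting expansion—$Y_i\wedge Y_j\wedge Y_k=0$ from the single-$e_1$/$e_2$ components, the $e_1\wedge e_2$ components giving relations among the pairwise wedges—matches the paper's equations (\ref{AAA})--(\ref{degree4}). You correctly see that setting $s=t=0$ gives the width-$3$ condition on $\{A^2,\dots,A^5\}$ and that Lemma \ref{codim2} controls these blocks. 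But you stop at ``iterating scrambles, each resulting subcase should terminate,'' while the paper organizes the termination around the specific invariant $p_x=\dim\operatorname{span}\{x_2,\dots,x_l\}$ (and $p_y$) and carries it through case by case. For a proposal, leaving this to bookkeeping is fair, but you should be aware that the invariant $p_x\leq 2$ is what makes the case tree finite and forces contradictions with either non-degeneracy, the no-rank-one hypothesis, or non-specialness.

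In short: your dimension count on the incidence variety is a nice alternative idea, but it buys less than the paper's descent (it yields $p\leq 3$ rather than $p=2$), and the residual $p=3$ case—especially $n=5$—is a real missing piece, not merely unwritten bookkeeping.
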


\begin{proof}
Assume the contrary, namely, $l\geq 5$. Let $n\geq 4$ be the smallest dimension so that such a system exists.

{\em Claim:  Some linear combination in ${\mathcal H}$ has rank $2$.}
\begin{proof}
If $n=4$, the set of all symmetric  $4\times 4$  matrices is $S^2{\mathbb C}^4 \cong {\mathbb C}^{10}$. Denote by $\Sigma$ the subset of matrices with rank at most $2$, then it is easy to see that $\Sigma$ has dimension $7$. So ${\mathbb P}(\Sigma)$ is a $6$-dimensional subvariety in ${\mathbb P}^9$, thus any ${\mathbb P}^3$ in $\mathbb{P}(S^2{\mathbb C}^4)\cong{\mathbb P}^9$ will intersect ${\mathbb P}(\Sigma)$. That is, for any linearly independent set of $4$ or more symmetric $4\times 4$ matrices, some combination will have rank equal to $2$ or less.

If $n\geq 5$, we may take a generic hyperplane $V'$ in $V\cong {\mathbb C}^n$, and restrict the system onto $V'$. By Lemma \ref{hyperplane}, the restriction system will again satisfy the non-degeneracy condition. So by the `minimality' of $n$, the restricted system can no longer be linearly independent, thus a linear combination $H^w$ will have zero restriction on $V'$, which implies  that $H^w$ has rank at most $2$, and the claim is proved.
\end{proof}

By a scrambling if necessary, we may assume that $H^1$ has rank $2$, and the system ${\mathcal H}$ is given by
\be \label{ranktwomatrix}
H^1=\left[ \begin{array}{ccc} 1 &  &  \\  & 1 & \\  &  & 0_{n-2}  \end{array} \right], \ \ \
H^i=\left[ \begin{array}{ccc} 0 & b_i & ^t\!x_i \\ b_i & c_i & ^t\!y_i \\ x_i & y_i & A^i  \end{array} \right], \ \ \ \ 2\leq i\leq l,
\ee
where $x_i$, $y_i$ are column vectors and $A^i$ are symmetric $(n-2)\times (n-2)$ matrices. The $(1,1)$-th position of $H^i$ is zero because we may subtract a suitable multiple of $H^1$ from it.  For column vector $u$ in the form $u = (t,s, v^t)^t$ where the column vector $v\in {\mathbb C}^{n-2}$, we have
\be \label{ranktwocolumns}
H^1_u= \left[ \begin{array}{c} t \\ s \\ 0   \end{array} \right], \ \ \ H^i_u=
\left[ \begin{array}{c} b_is+\langle x_i,v\rangle    \\ b_it+ c_is + \langle y_i,v\rangle \\ tx_i+sy_i+A^i_v  \end{array} \right] , \ \ \ \ \ 2\leq i\leq l.
\ee
For any $2\leq i<j<k\leq l$, the vanishing of wedge product $H^1_u\wedge H^i_u\wedge H^j_u\wedge H^k_u$ first of all gives
$$
(tx_i+sy_i+ A^i_v) \wedge (tx_j+sy_j+ A^j_v)\wedge (tx_k+sy_k+ A^k_v) = 0,
$$
or equivalently
\begin{eqnarray}
&& x_i\wedge x_j\wedge x_k=y_i\wedge y_j\wedge y_k=0,  \nonumber \\
&&  A^i_v\wedge A^j_v \wedge A^k_v=0,  \nonumber \\
&&  {\mathfrak S} \{ x_i \wedge x_j\wedge  y_k\} ={\mathfrak S}\{  x_i \wedge y_j \wedge y_k \} =0,  \label{AAA} \\
&& {\mathfrak S} \{ x_i \wedge x_j\wedge  A^k_v\} = {\mathfrak S} \{ y_i \wedge y_j\wedge  A^k_v\} = {\mathfrak S} \{ (x_i \wedge y_j + y_i \wedge x_j) \wedge  A^k_v\} = 0,  \nonumber \\
&& {\mathfrak S} \{ x_i \wedge A^j_v\wedge  A^k_v\} ={\mathfrak S} \{ y_i \wedge A^j_v\wedge  A^k_v\} = 0, \nonumber
\end{eqnarray}
where ${\mathfrak S}$ means the cyclic sum, namely when $(ijk)$ are cyclicly permuted. By looking at the terms in  $H^1_u\wedge H^i_u \wedge H^j_u \wedge H^k_u$ involving $e_1\wedge e_2$, we get
$$
 {\mathfrak S} \{ \,Q_i(t,s) \,( tx_j + sy_j + A^j_v) \wedge  ( tx_k + sy_k + A^k_v) \} =0.
 $$
 where
 $$
 Q_i(t,s) = b_i(t^2-s^2) + c_i ts + \langle y_i,v\rangle t - \langle x_i,v\rangle s.
 $$
 This is a degree $4$ polynomial in $t$ and $s$, and by looking at the coefficients, we get a bunch of equations. The degree $1$ terms give
 \be
{\mathfrak S} \{  \langle y_i,v\rangle\, A^j_v \wedge A^k_v \} =
 {\mathfrak S} \{  \langle x_i,v\rangle \,A^j_v \wedge A^k_v  \} =0. \label{degree1}
\ee
The degree $2$ terms give
 \begin{eqnarray}
&& {\mathfrak S} \{ b_iA^j_v\wedge A^k_v+  \langle y_i,v\rangle ( x_j\wedge A^k_v - x_k\wedge A^j_v) \} =0, \nonumber \\
&& {\mathfrak S} \{ -b_iA^j_v\wedge A^k_v - \langle x_i,v\rangle ( y_j\wedge A^k_v - y_k\wedge A^j_v) \} =0, \label{degree2} \\
&& {\mathfrak S} \{ c_iA^j_v\wedge A^k_v - \langle x_i,v\rangle ( x_j\wedge A^k_v - x_k\wedge A^j_v) + \langle y_i,v\rangle ( y_j\wedge A^k_v - y_k\wedge A^j_v)   \}  \} =0. \nonumber
\end{eqnarray}
The degree $3$ terms give
 \begin{eqnarray}
&& {\mathfrak S} \{  b_i( x_j A^k_v - x_k A^j_v)   + \langle y_i,v\rangle x_jx_k\} =0, \nonumber \\
&& {\mathfrak S} \{  -b_i  ( y_j A^k_v - y_k A^j_v) - \langle x_i,v\rangle y_jy_k \} =0,  \label{degree3} \\
&& {\mathfrak S} \{  b_i ( y_j A^k_v - y_k A^j_v)  + c_i ( x_j A^k_v - x_k A^j_v) +   \langle y_i,v\rangle ( x_j y_k + y_jx_k) -  \langle x_i,v\rangle  x_jx_k \} = 0, \nonumber \\
&& {\mathfrak S} \{  c_i ( y_j A^k_v - y_k A^j_v) - b_i ( x_j A^k_v - x_k A^j_v)   -  \langle x_i,v\rangle ( x_j y_k + y_jx_k) + \langle y_i,v\rangle  y_jy_k \} = 0, \nonumber
\end{eqnarray}
and finally, the degree $4$ terms give
\begin{eqnarray}
&& {\mathfrak S} \{  b_ix_jx_k \} = {\mathfrak S} \{  b_iy_jy_k \} = {\mathfrak S} \{  c_i ( x_jy_k+y_jx_k) \}  =0, \nonumber \\
&& {\mathfrak S} \{  b_i ( x_jy_k+y_jx_k) + c_i x_jx_k \}   =0, \label{degree4} \\
&& {\mathfrak S} \{  b_i ( x_jy_k+y_jx_k) - c_i y_jy_k \}   =0. \nonumber
\end{eqnarray}

Let $V_x$, $V_y$ be respectively the space spanned by $\{ x_2, \ldots , x_l\}$ or $\{ y_2, \ldots , y_l\}$, and denote by $p_x$, $p_y$ their dimensions. Then we have $p_x, p_y \leq 2$ due to the first equation in (\ref{AAA}). Without loss of generality, let us assume that $p_x\geq p_y$.

{\em Claim: $p_x=2$.}
\begin{proof}
If $p_x=0$, then $p_y=0$, all the $x_i=y_i=0$. There are at least four $A^i$ satisfying the width-$3$ condition by (\ref{AAA}). So they cannot be all linearly independent, otherwise by Lemma \ref{codim2} $\mathcal{H}$ is special and contradicts to the assumption in Lemma \ref{partone}. So we may, by a scrambling if necessary, \emph{assume that $A^2=0$}. In this case $H^2$ has only the upper left $2\times 2$ corner, so $H^2-\lambda H^1$ for suitable $\lambda$ will have rank one, a contradiction. So we must have $p_x\geq 1$.

If $p_x=1$, then we may assume that $x_2\neq 0$ and $x_3= \cdots = x_l=0$. By (\ref{degree1}), we get $\langle x_2, v\rangle A^j_v\wedge A^k_v =0$ for any $v$ and any $2<j<k$. Since $x_2\neq 0$, $\langle x_2, v\rangle \neq 0$ for generic $v$, so for generic $v$ hence all $v$ we have $A^j_v \wedge A^k_v=0$. Thus $A^3, \ldots , A^l$ are proportional to each other. With a scramble, we may assume that $A^4= \cdots = A^l=0$. Now since $p_y\leq 1$, $y_4$ and $y_5$ are proportional, so by a scramble of $\{ H^4, H^5\}$, we may assume that $y_4=0$. Now $H^4$ has only the upper left $2\time 2$ corner, by subtracting a multiple of $H^1$, it will have rank $1$, a contradiction. This competes the proof of the claim.
\end{proof}

So we have $p_x=2$. Let us assume that $x_2\wedge x_3\neq 0$, and $x_4= \cdots = x_l=0$. By the first equation of (\ref{degree4}) applied to the cyclic permutation $(23i)$ with $i\geq 4$, we get $b_ix_2\wedge x_3=0$, hence
\be
b_4= \cdots = b_l=0. \label{b=0}
 \ee
By (\ref{degree1}), we have $\langle x_2, v\rangle A^4_v\wedge A^5_v=0$, hence $A^4_v\wedge A^5_v=0$ for all $v$ so $A^4$, $A^5$ are proportional. Similarly, {\em all $A^4, \ldots , A^l$ are mutually proportional}. Also, by the third equation in (\ref{AAA}) and the fact $x_4=0$, we have $x_2\wedge x_3\wedge y_4=0$, so $y_4 \in P_x=\mbox{sp}\{ x_2, x_3\}$. Similarly, {\em $y_i\in P_x$ for any $i\geq4$}.

If $y_4=y_5=0$, then a linear combination of $H^4$ and $H^5$ will have its lower right corner vanishes, thus with only its upper left $2\times 2$ corner possibly non-zero. By subtracting a multiple of $H^1$ from it, we get a rank $1$ matrix, a contradiction.

If $y_4\wedge y_5\neq 0$, then $P_y=\mbox{sp}\{ y_4, y_5\} = P_x$. By the fourth equation in (\ref{AAA}) and by our assumption that $p_y\leq p_x=2$, $y_4\wedge y_5 \wedge A^i_v=0$ for any $i\neq 4,5$. So $R(A^i)\subseteq P_y$. Similarly, $x_2\wedge x_3\wedge A^j_v=0$ for any $j\neq 2,3$, so $R(A^j)\subseteq P_x$. Therefore, all $x_i$, $y_i$ and all $A^i$ have range in $P=P_x=P_y$, so ${\mathcal H}$ forms a $3\times 3$ system, a contradiction.

We are left with the case when $y_4$ and $y_5$ are not both zero but $y_4\wedge y_5=0$. Without loss of generality, let us assume that $y_4\neq 0$ and $y_5=0$. Since $b_5=0$, $x_5=y_5=0$,  the second line of (\ref{degree4}) applied to $(ijk)=(235)$ gives us $c_5x_2\wedge x_3=0$. Hence $c_5=0$. Also, since $A^4_v\wedge A^5_v=0$, by the first line of (\ref{degree2}) applied to $(245)$, we get $\langle y_4,v\rangle x_2\wedge A^5_v=0$ for all $v$. Hence $x_2\wedge A^5_v=0$ for all $v$, and $A^5$ is proportional to $x_2\,x_2^t$. Now $b_5=c_5=0$, $x_5=y_5=0$, so $H^5$ has rank $1$, a contradiction. This completes the proof of the lemma.
\end{proof}

\begin{lemma} \label{parttwo}
Let ${\mathcal H}$ be a non-special system without any rank one matrix and $l=4$. Then $n=4$.
\end{lemma}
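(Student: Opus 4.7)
The proof parallels that of Lemma~\ref{partone}: normalize a rank-$2$ element of $\operatorname{span}(\mathcal{H})$ as $H^1$ and write $H^2,H^3,H^4$ in the form (\ref{ranktwomatrix}); apply the wedge identities (\ref{AAA})--(\ref{degree4}) for the unique triple $(i,j,k)=(2,3,4)$; and run a case analysis on $p_x=\dim\operatorname{span}\{x_2,x_3,x_4\}$ and $p_y=\dim\operatorname{span}\{y_2,y_3,y_4\}$, both at most $2$ by (\ref{AAA}), assuming $p_x\geq p_y$ without loss of generality. The crucial difference from Lemma~\ref{partone} is that with $l=4$ (rather than $l\geq 5$) we aim to conclude $n=4$ instead of deriving an outright contradiction.

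Securing the rank-$2$ normalization is the first technical point. For $n=4$ this follows from the dimension count at the beginning of the proof of Lemma~\ref{partone}'s claim. For $n\geq 5$ I would use a minimality-of-$n$ argument: assume a counterexample with smallest possible $n\geq 5$ and restrict to a generic hyperplane $V'\subset V$; by Lemma~\ref{hyperplane} non-degeneracy is preserved, so either the four restrictions become linearly dependent (producing a non-zero $H^w$ that vanishes on a codimension-one subspace and therefore has rank at most $2$), or the restricted system is a smaller counterexample. Some care is needed to verify that the non-special and no-rank-one conditions survive the restriction to a generic hyperplane.

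Once the normal form is in place, I exhaust the cases on $(p_x,p_y)$. If $p_x=0$ then $p_y=0$; all $x_i,y_i$ vanish and $\{A^2,A^3,A^4\}$ satisfy the width-$3$ condition in $\mathbb{C}^{n-2}$, with common kernel equal to $\bigcap_i\ker(H^i)$ restricted to the lower block, which must be trivial by non-degeneracy. Corollary~\ref{cor} then bounds the rank of this $(n-2,3)$-subsystem by $2$, forcing a linear dependence among the $A^i$; a scramble followed by subtracting an eigenvalue multiple of $H^1$ from the corresponding $H^i$ produces a rank-$1$ matrix, contradicting the standing hypothesis. If $p_x=1$, scramble so $x_3=x_4=0$ while $x_2\neq 0$; identity (\ref{degree1}) with $\langle x_2,v\rangle\neq 0$ generically forces $A^3_v\wedge A^4_v=0$, hence $A^3,A^4$ are proportional, and repeating with the remaining identities forces all $A^i$ to be mutually proportional. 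Together with $p_y\leq 1$, the whole system is effectively supported on a subspace of $\mathbb{C}^n$ of dimension at most $4$, and non-degeneracy forces $n=4$ (or, in degenerate configurations, yields a rank-$1$ combination contradicting the hypothesis).

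The main case $p_x=2$ is the principal technical hurdle. Scramble so $x_2\wedge x_3\neq 0$ and $x_4=0$; the identity $\mathfrak{S}\{b_ix_j\wedge x_k\}=0$ from (\ref{degree4}) then gives $b_4=0$, and (\ref{AAA}) combined with (\ref{degree1})--(\ref{degree4}) forces the ranges of $A^2,A^3,A^4$ into a common $2$-plane $P\subseteq\mathbb{C}^{n-2}$, together with analogous constraints placing the $y_i$'s in $P_x=\operatorname{span}\{x_2,x_3\}$ or a closely related plane. A sub-case analysis on $p_y\in\{0,1,2\}$, paralleling the endgame of the proof of Lemma~\ref{partone}, shows that in each non-degenerate situation the entire system is supported in $\operatorname{span}(e_1,e_2)\oplus P$, a subspace of $\mathbb{C}^n$ of dimension at most $4$; non-degeneracy then forces $\dim P=n-2\leq 2$, i.e., $n=4$. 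The principal obstacle is the symbolic bookkeeping needed to assemble these range restrictions from the interlocking identities, especially in the sub-case $p_y=1$ where the $y_i$'s need not lie entirely in $P_x$ and one must track how the scramble killing $x_4$ reshuffles the $y$- and $A$-data before the plane $P$ can be identified.
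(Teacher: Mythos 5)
Your overall skeleton — normalize a rank-$2$ element as $H^1$, pass to the block form (\ref{ranktwomatrix}), extract the identities (\ref{AAA})--(\ref{degree4}) for the single cycle $(234)$, and run the case split on $p_x\geq p_y$ — is exactly the paper's. Your handling of $p_x=0$ and $p_x=1$ is also essentially correct, though the paper phrases the $p_x=0$ endgame via the non-degeneracy condition rather than via a rank-one combination.

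However, there is a genuine gap at the very first step, and it is not the cosmetic "some care is needed" that you flag. Your dichotomy for $n\geq 5$ — "either the four restrictions become linearly dependent, or the restricted system is a smaller counterexample" — breaks down completely when $n=5$. Restricting to a generic hyperplane $V'\cong\mathbb{C}^4$ produces a non-degenerate width-$4$ system in dimension $4$, and that is \emph{permitted} by the conclusion of the lemma: a $4$-element system in $\mathbb{C}^4$ is not a counterexample to anything, so minimality gives no contradiction, and no rank-$2$ element is produced. (In fact, when $\mathcal{H}$ has no rank-$\leq 2$ element, the restriction map $\rho_{V'}|_{\mathcal{H}}$ is automatically injective for \emph{every} hyperplane $V'$, so your "linearly dependent" branch never fires at all; the argument must go through minimality or some other route, which closes at $n\geq 6$ but not at $n=5$.) The paper is explicit about this obstruction ("When $n=5$ this trick can no longer be used") and resolves it with an entirely separate argument occupying all of Appendix~\ref{proof of codim3-2}: a direct, hands-on classification showing that any system of four symmetric $5\times 5$ matrices satisfying non-degeneracy and the width-$4$ condition must contain a linear combination of rank at most $2$. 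That is a substantial, unavoidable piece of the proof, and your proposal does not contain it or any substitute for it.

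A secondary, smaller issue: the $p_x=2$ case is where almost all of the real work in the paper's proof lives (three sub-cases on $p_y\in\{1,2\}$ and whether $P_y=P_x$, each requiring a careful chain of deductions from (\ref{degree2})--(\ref{degree4}) to reach either $n=4$ or a contradiction), and your proposal only gestures at it. That part is not a conceptual gap — you correctly identify it as the technical hurdle and sketch the right target (everything is supported on a $4$-dimensional subspace) — but as written it is a plan, not a proof.
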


\begin{proof}
Assume that the conclusion fails, namely, there exists a non-special system ${\mathcal H} = \{ H^1, \ldots , H^4\}$ without rank one element such that $n>4$. We want to derive at a contradiction, and the proof will be analogous to that of Lemma \ref{partone}, except that we need a lot more argument since we don't have $H^5$ to help us now.

We may assume that $n\geq 5$ is the \emph{smallest} dimension where such a system exists. If $n>5$, then by restricting the system onto a generic hyperplane of $V={\mathbb C}^n$ and applying Lemma \ref{hyperplane}, we know that there will be some $H^w$ in the system with rank $2$. When $n=5$ this trick can no longer be used, and \emph{we will discuss this case separately in Appendix \ref{proof of codim3-2}}, to rule out the possibility of a system of $4$ symmetric $5\times 5$ matrices where no $H^w$ can be of rank $2$ or lower. So \emph{from now on we will assume that $n\geq 5$ and $H^1$ has rank $2$}.

We assume that the system is given by (\ref{ranktwomatrix}), with column vectors given by (\ref{ranktwocolumns}). The width-$4$ condition gives us equations (\ref{AAA}) through (\ref{degree4}). Let $P_x$, $P_y$ and $p_x$, $p_y$  be as before, and assume that $p_x\geq p_y$. Again we have $p_x\leq 2$.

If $p_x=p_y=0$, then we notice that $\{ A^2, A^3, A^4\}$ must be linearly independent, as otherwise we may assume by a scramble that $A^2=0$ hence $H^2$ will only have its upper left $2\times 2$ corner, and some $H^2-cH^1$ will be rank $1$, a contradiction. By (\ref{AAA}), $A^2_v\wedge A^3_v \wedge A^4_v=0$ for any $v$, thus $\{ A^2, A^3, A^4\}$ forms a $2\times 2$ system, hence ${\mathcal H}$ forms a $4\times 4$ system, a contradiction. So we must have $p_x=1$ or $p_x=2$.

{\em Case 1: $p_x=1$.}

Assume $x_2\neq 0$ and $x_3=  x_4=0$. Since $p_y\leq 1$, we may assume that $y_4=0$ while $y_2\wedge y_3=0$. We will further divide the discussion into two subcases: (a) $y_3\neq 0$, and (b) $y_3=0$.

{\em Subcase 1a: $y_3\neq 0$.}

In this case, by a scramble we may assume that $y_2=0$. Since $x_4=y_4=0$, the matrix $A^4$ cannot be zero, as otherwise some $H^4-cH^1$ will have rank one. By (\ref{degree1}), we get
$$\langle y_3,v\rangle A^2_v\wedge A^4_v=0, \ \ \ \langle x_2,v\rangle A^3_v\wedge A^4_v=0.$$
So $A^2$, $A^3$ are proportional to $A^4$. Subtract multiples of $H^4$ from $H^2$ and $H^3$, we may assume that $A^2=A^3=0$. Now by (\ref{degree2}), we get $ \langle y_3,v\rangle x_2\wedge A^4_v=0$, which implies that $x_2\wedge A_v$ for generic hence all $v$, so $A^4$ is a multiple of $x_2\,^t\!x_2$. Regardless of whether $y_3$ is parallel to $x_2$ or not, the system ${\mathcal H}$ has dimension at most $3$, a contradiction.

{\em Subcase 1b: $y_3=0$.}

In this case we have $x_3=x_4=y_3=y_4=0$, so $\{ A^3, A^4\}$ is linearly independent, as otherwise a linear combination of $H^1$, $H^3$ and $H^4$ would have rank $1$. On the other hand,  by (\ref{degree1}) we have $\langle x_2,v\rangle A^3_v\wedge A^4_v=0$. So for generic hence all $v$, $A^3_v\wedge A^4_v=0$, which will force $\{ A^3, A^4\}$ to be linearly dependent, a contradiction. This completes the proof of Case 1.

{\em Case 2: $p_x=2$.}

Let us assume that $x_2\wedge x_3\neq 0$ and $x_4=0$. By (\ref{AAA}), we get $x_2x_3y_4=0$, so $y_4\in P_x$, and $x_2x_3A^4_v=0$, so $R(A^4)\subset P_x$. On the other hand, by (\ref{degree4}) and (\ref{degree3}), we get $b_4x_2\wedge x_3=0$ so $b_4=0$, and
\be
c_4x_2x_3 +(b_2x_3-b_3x_2)y_4=0, \ \ \ \langle y_4,v\rangle x_2x_3 + (b_2x_3-b_3x_2)A^4_v=0.
\ee
If $y_4=0$, then $c_4=0$, and $(b_2x_3-b_3x_2)A^4_v=0$. Since both $b_4=c_4=0$, we know that $A^4$ must have rank at least two since $H^4$ does, so the vector $b_2x_3-b_3x_2=0$, which implies that $b_2=b_3=0$. The third line of (\ref{degree3}) now gives $(c_2x_3-c_3x_2)A^4_v=0$, so $c_2x_3-c_3x_2=0$, thus $c_2=c_3=0$. Note that all $b_i$ and $c_i=0$, we have $H^2_u\wedge H^3_u \wedge H^4_u=0$, so ${\mathcal H}$ is special, a contradiction.

Therefore we must have $y_4\neq 0$. Let us divide the discussion into three subcases: (a) $p_y=1$, (b) $p_y=2$ and $P_y=P_x$, and (c) $p_2=2$ but $P_y\neq P_x$.

{\em Subcase 2a: $p_y=1$.}

Since $y_4\neq 0$, by a scramble we may assume that $y_2=y_3=0$, and $y_4=x_2$. In this case, by (\ref{degree4}), we get $b_2=c_2=c_4=0$, so only $b_3$, $c_3$ are possibly non-zero. The four equations of (\ref{degree3}) now give us
$$ b_3x_2A^4_v=\langle x_2,v\rangle x_2x_3, \  b_3x_2A^2_v=0, \ \ b_3x_2A^2_v=c_3x_2A^4_v, \ c_3x_2A^2_4+b_3x_2A^4_v+\langle x_2,v\rangle x_2x_3=0. $$
Plug the first two into the last two, we get
$$c_3x_2A^4_v=0, \ \ \ c_3x_2A^2_v = -2 \langle x_2,v\rangle x_2x_3.$$
Since $x_2\neq 0$, the second equation in the above line tells us that $c_3\neq 0$, so the first equation in this line implies that $x_2A^4_v=0$ for all $v$. Plug this into the first equation involving $b_3$, we get $0=\langle x_2,v\rangle x_2x_3$, which means $0=\langle x_2, v\rangle$ for all $v$, a contradiction.

{\em Subcase 2b: $y_4\neq 0$ and $P_y=P_x$.}

By a scramble, we may assume that $y_2=0$. In this case, $\{x_2, x_3\}$ and $\{y_3, y_4\}$ are two basis of $P=P_x=P_y$. By (\ref{AAA}), we get $x_2x_3A^4_v=0$, $y_3y_4A^2_v=0$, so $R(A^4)\subset P$ and $R(A^2)\subset P$. To finish the proof in this case, it suffices to show $R(A^3)\subset P$, as it implies that ${\mathcal H}$ forms a $4\times 4$ system. To show this, let us first assume that $\{ A^2, A^4\}$ is linearly independent. If $\{ A^2, A^3, A^4\}$ is linearly dependent, then $A^3$ must be a linear combination of $A^2$ and $A^4$, thus $R(A^3)\subset P$. On the other hand, if $\{ A^2, A^3, A^4\}$ is linearly independent, then since we have $A^2_vA^3_vA^4_v=0$ by (\ref{AAA}), these three $A^i$ form a $2\times 2$ system, so $R(A^2)+R(A^4)=P$ which contains $R(A^3)$.

Now let us assume that $\{ A^2, A^4\}$ is linearly dependent. By (\ref{degree4}), we have $b_4x_2x_3=0$ and $b_2y_3y_4=0$. So $b_2=b_4=0$. By the first equation of (\ref{degree2}), we have
$$ b_3A^2_vA^4_v - \langle y_3,v\rangle x_2A^4_v + \langle y_4,v\rangle (x_2A^3_v - x_3 A^2_v) = 0. $$
The first term is zero since $\{ A^2, A^4\}$ is linearly dependent. Wedge with $x_3$, we get
$$ \langle y_4,v\rangle x_2x_3A^3_v=0 .$$
So $x_2x_3A^3_v=0$ for generic thus all $v$. This means that $R(A^3)\subset P$ and we are done.

{\em Subcase 2c: $y_4\neq 0$ and $P_y\neq P_x$. }

Again by a scramble we may assume that $y_2=0$. We have $x_2x_3y_4=0$ by (\ref{AAA}), so $y_4\in P_x\cap P_y$. Similarly, $x_2\in P_x\cap P_y$ as well. Scale $H^4$, we may assume that $x_2=y_4=z$. It lies in $P_x\cap P_y$, and $\{ x_3, y_3, z\}$ forms a basis of the space $P=P_x+P_y$.

From (\ref{degree4}), we get $b_2=b_4=c_2=c_4=0$. By (\ref{degree3}), we obtain
\begin{eqnarray*}
&& z\wedge \{ b_3A^4_v - \langle z,v\rangle x_3\}  = z \wedge \{ b_3A^2_v - \langle z,v\rangle y_3\}  = 0, \\
&& z\wedge \{ b_3A^2_v-c_3A^4_v+ \langle z,v\rangle y_3\}  = z\wedge \{ c_3A^2_v+b_3A^4_v+ \langle z,v\rangle x_3\}  = 0. \ \ \ \ \
\end{eqnarray*}
Plug the second one in the first line into the first one on the second line, we get
$$ z\wedge \{ -c_3A^4_v+ 2\langle z,v\rangle y_3\} =0. $$
Taking the wedge product of the last equation with $x_3$, we get
$$ c_3 x_3zA^4_v = 2\langle z,v\rangle x_2zy_3. $$
By (\ref{AAA}), we have $x_2x_3A^4_v=0$, while $x_3zy_3\neq 0$ by our assumption, so we get $\langle z,v\rangle =0$ for all $v$, which is absurd. This completes the proof of the case, thus the lemma.
\end{proof}

\section{$5\times 5$ system without rank $2$ elements}\label{proof of codim3-2}

In this appendix, we will show that for any system ${\mathcal H} =\{ H^1, \ldots , H^4\}$  of symmetric $5\times 5$ matrices, there always exists some $H^w$ with rank $2$ or less, and complete the proof of Lemma \ref{codim3}.

\begin{proof}
Let $r$ be the smallest rank of any $H^w$ in the system. Assume the contrary, namely, $r\geq 3$. We want to derive at a contradiction. Clearly, $r\leq 4$.

{\em Case 1: The minimum rank of $H^w$ is $4$.}

By a scramble, we may assume that
$$ H^1=\left[ \begin{array}{cc} I_4 & \\ & 0 \end{array} \right] , \ \ H^i  =\left[ \begin{array}{cc} A^i & x_i \\ x_i^t & a_i \end{array} \right], \ \ \ 2\leq i\leq 4,
$$
where each $x_i$ is a column vector in ${\mathbb C}^4$ and each $A^i$ a symmetric $4\times 4$ matrix. For column vector $u$ such that $u=(v^t,t)^t$ where $v\in {\mathbb C}^4$ and $t\in {\mathbb C}$, we have
$$ H^1_u = \left[ \begin{array}{c} u \\  0 \end{array} \right] , \ \ H^i_u = \left[ \begin{array}{c} A^i_v + tx_i \\  \langle x_i,v\rangle + ta_i \end{array} \right] , \ \ \ 2\leq i\leq 4.
$$
The terms containing $e_5$ in $H^1_u\wedge H^2_u\wedge  H^3_u\wedge  H^4_u=0$ gives us
$$ v\wedge {\mathfrak S} \{ (\langle x_i,v\rangle + ta_i) ( A^j_v + tx_j) \wedge (A^k_v + tx_k)\} =0,$$
where the cycle $(ijk)$ runs through all cyclic permutations of $(234)$. This is a cubic polynomial in $t$, and the $t^3$ terms give us
\be
 v \wedge {\mathfrak S} \{ a_ix_j\wedge x_k\} = 0.
 \ee
 By a scramble, we may assume that $a_3=a_4=0$. So the last equation takes the form $v\wedge a_2 x_3\wedge x_4=0$ for all all $v$. Hence $a_2 x_3\wedge x_4=0$. If $a_2\neq 0$, then $x_3\wedge x_4=0$. By a scramble of $\{H^3, H^4\}$, we may assume that $x_4=0$. Now $H^4$ has only the upper left block $A^4$, and $H^4-cH^1$ for a suitable constant $c$ would have rank less than $4$, a contradiction. So we may assume that $a_2=0$.

 In this case, notice that in $H^1_u\wedge H^2_u\wedge  H^3_u\wedge  H^4_u=0$, the terms without $e_5$ also gives us $v\wedge x_2\wedge x_3\wedge x_4=0$, which implies that $x_2\wedge x_3\wedge x_4=0$. So by a scramble we may assume that $x_4=0$ once again, which leads to a contradiction as before. So we know that this case does not occur.

 {\em Case 2: The minimum rank of $H^w$ is $3$.}

Assume that
$$ H^1=\left[ \begin{array}{ccc} I_3 & & \\ & 0 & \\ & & 0  \end{array} \right] , \ \ H^i  =\left[ \begin{array}{ccc} A^i & x_i & y_i \\ x_i^t & a_i & b_i \\ y_i^t & b_i & c_i \end{array} \right], \ \ \ 2\leq i\leq 4,
$$
where each $x_i$, $y_i$  is a column vector in ${\mathbb C}^3$ and each $A^i$ a symmetric $3\times 3$ matrix. For column vector $u$ such that $u=(v^t,t, s)^t$ where  $v\in {\mathbb C}^3$ and $t,s \in {\mathbb C}$, we have
$$ H^1_u = \left[ \begin{array}{c} v \\  0 \\0 \end{array} \right] , \ \ H^i_u = \left[ \begin{array}{c} A^i_v + tx_i +sy_i \\  \langle x_i,v\rangle + ta_i +sb_i \\ \langle y_i,v\rangle + tb_i +sc_i \end{array} \right] , \ \ \ 2\leq i\leq 4.
$$
The terms containing $e_4\wedge e_5$ in $H^1_u\wedge \cdots \wedge H^4_u=0$ are
\be \label{cubic}
v\wedge {\mathfrak S} \{ Q_{ij} (tx_k+sy_k+A^k_v) \} =0,
\ee
where the sum is for $(ijk)$ to take all cyclic permutations of $(234)$, and
\begin{eqnarray*}
 Q_{ij} & = & (\langle x_i,v\rangle + ta_i+sb_i)(\langle y_j,v\rangle + tb_j+sc_j)- (\langle x_j,v\rangle + ta_j+sb_j)(\langle y_i,v\rangle + tb_i+sc_i)\\
 & = & t^2(a_ib_j-b_ia_j) + s^2(b_ic_j-c_ib_j) + ts(a_ic_j-c_ia_j) + (\langle x_i,v\rangle \langle y_j,v\rangle - \langle y_i,v\rangle \langle x_j,v\rangle )\\
 & & +\ t(a_i \langle y_j,v\rangle + b_j \langle x_i,v\rangle  - a_j \langle y_i,v\rangle  - b_i \langle x_j,v\rangle   ) \\
 & & + \   s(b_i \langle y_j,v\rangle + c_j \langle x_i,v\rangle  - b_j \langle y_i,v\rangle  - c_i \langle x_j,v\rangle )
 \end{eqnarray*}
 For $2\leq i\leq 4$, let us write
 $$ B^i = \left[ \begin{array}{ll} a_i & b_i \\ b_i & c_i \end{array} \right].$$
 We will divide the discussions into three subcases, depending the behavior of those $B^i$.

{\em Subcase 2a. $\{ B^2, B^3, B^4\}$ is linearly independent.}

With a scramble, we may assume that the matrices $\{ B^2, B^3, B^4\}$ are given by
$$  \left[ \begin{array}{ll} 1 & 0 \\ 0 & 0 \end{array} \right], \
 \ \ \left[ \begin{array}{ll} 0 & 1 \\ 1 & 0 \end{array} \right], \
 \ \  \left[ \begin{array}{ll} 0 & 0 \\ 0 & 1 \end{array} \right], $$
respectively. In other words, we have $a_2=b_3=c_4=0$ and all other $a_i$, $b_i$, $c_i$ are zero. By the cubic terms (in $t$ and $s$) in (\ref{cubic}), we get
$$ y_2=x_4=0, \ \ \ y_3=x_2, \ \ y_4=x_3. $$
Now by the $s^2$ term in (\ref{cubic}), we get
$$ v\wedge \{ A^2_v -\langle x_2,v\rangle y_3 \} =0.$$
Since $y_3=x_2$, the above equation says that $I_v\wedge (A^2-x_2\,x_2^t)_v =0$ for any $v\in {\mathbb C}^3$, thus
$ A^2-x_2\,x_2^t = c\, I_3$ for some constant $c$. This means that
$$ H^2-cH^1 = \left[ \begin{array}{ccc} x_2\,x_2^t & x_2 & 0 \\ x_2^t & 1 & 0 \\ 0 & 0 & 0 \end{array} \right] = \left[ \begin{array}{c} x_2\\ 1 \\0 \end{array} \right]  \cdot [\, x_2^t ,\,1, \,0 ] ,$$
which has rank $1$, a contradiction.

{\em Subcase 2b: $\{ B^2, B^3, B^4\}$ is linearly dependent but not all zero.}

By a scramble, we may assume that $B^4=0$. The cubic terms in (\ref{cubic}) now gives
$$ v\wedge  \gamma x_4 = v\wedge \alpha y_4 = v\wedge  ( \beta x_4+\gamma y_4) = v\wedge  (\alpha x_4 + \beta y_4) =0,$$
where
$$ \alpha = b_2c_3-c_2b_3, \ \ \beta = a_2c_3 - c_2a_3, \ \ \gamma = a_2b_3-b_2a_3. $$
If $x_4=y_4=0$, then $H^4$ has only the upper left corner, thus $H^4-cH^1$ for suitable constant $c$ would have rank less than $3$, a contradiction. If $x_4\neq 0$, then by the above equations, we get successively $\gamma =0$, $\beta=0$, and $\alpha=0$. Similarly, if $y_4\neq 0$, the same thing holds, so we always have $\alpha =\beta =\gamma =0$. This means that $B^2$ and $B^3$ are proportional. So by a scramble of $\{ H^2, H^3\}$, we may assume that $B^3=0$. Under our case assumption, we have $B^2\neq 0$.

By looking at the terms containing one of $e_4$, $e_5$ but not both in $H^1_u \wedge \cdots \wedge H^4_u=0$, the highest order terms in $t$ and $s$ give us
\begin{eqnarray}
 && v\wedge {\mathfrak S} \{ a_ix_jx_k\} = v\wedge {\mathfrak S} \{ b_ix_jx_k\}= v\wedge {\mathfrak S} \{ b_iy_jy_k\} = v\wedge {\mathfrak S} \{ c_iy_jy_k\} =0, \nonumber \\
 && v\wedge {\mathfrak S} \{ a_i (x_jy_k+y_jx_k)+ b_i x_jx_k\} =  v\wedge {\mathfrak S} \{ b_i (x_jy_k+y_jx_k)+ c_i x_jx_k\} =0, \label{abcxy}\\
 && v\wedge {\mathfrak S} \{ a_iy_jy_k + b_i (x_jy_k+y_jx_k)\} =  v\wedge {\mathfrak S} \{ b_i y_jy_k + c_i (x_jy_k+y_jx_k) \} =0. \nonumber
 \end{eqnarray}
Since $B^3=B^4=0$, the above equations imply that, if $x_3\wedge x_4\neq 0$, then $a_2=b_2=c_2=0$, a contradiction. So we must have $x_3\wedge x_4=0$. Similarly, $y_3\wedge y_4=0$. By a scramble of $\{H^3, H^4\}$, let us assume that $x_4=0$. Then $y_4\neq 0$, as otherwise $H^4$ will have only the upper left corner, so $H^4-cH^1$ for suitable $c$ would have rank less than $3$. Therefore, $y_3= c'y_4$ for some constant $c'$. Replace $H^3$ by $H^3-c'H^4$, we may assume that $y_3=0$. This will imply that $x_3\neq 0$. By (\ref{abcxy}), we also get
$$ a_2 x_3y_4 = b_2x_3y_4 =c_2 x_3y_4=0.$$
So by $B^2\neq 0$ we get $x_2\wedge y_4=0$. Scale $H^3$ if necessary, let us assume that $x_3=y_4$, and denote this non-zero vector in ${\mathbb C}^3$ as $v_0$. By the degree two (in $t$ and $s$) terms in (\ref{cubic}), we get
$$ v \wedge a_2v_0 \langle v_0,v\rangle = - v \wedge c_2v_0 \langle v_0,v\rangle = -2 v \wedge b_2v_0 \langle v_0,v\rangle =0$$
for any $v\in {\mathbb C}^3$. This implies that $a_2=b_2=c_2=0$, so $B^2=0$, a contradiction.

{\em Subcase 2c: $B^2=B^3=B^4=0$.}

Again by looking at the terms containing $e_4$ but not $e_5$ in $H^1_u \wedge \cdots \wedge H^4_u=0$, we get
$$ v\wedge {\mathfrak S} \{ \langle x_i,v\rangle x_j\wedge x_k\} =0.$$
If $x_1\wedge x_2\wedge x_3\neq 0$, let us express $v= \sum_{i=1}^3 v_i x_i$ and plug into the above equality, we get
$$ \langle v, v\rangle x_2\wedge x_3\wedge x_4 =0,$$
or equivalently, $\langle v, v\rangle =0$ for all $v$, which is a contradiction. So $\{ x_2, x_3, x_4\}$ must be linearly dependent. Similarly, $\{ y_2, y_3, y_4\}$ is linearly dependent. By a scramble, let us assume that $x_4=0$. Then $y_4\neq 0$, so by another scramble (without changing $H^4$) we may assume that $y_3=0$. Now by the $t$-terms in (\ref{cubic}), we get
$$
\langle y_4, v\rangle v\wedge \{ \langle x_3, v\rangle  x_2 - \langle x_2, v\rangle  x_3\} =0.
$$
Since $y_4\neq 0$, we can drop the factor $ \langle y_4, v\rangle$. Wedge it with $x_3$, we get $v\wedge \langle x_3, v\rangle x_2\wedge x_3=0$. So $x_2x_3=0$. Similarly, by $x_3\neq 0$, we get $y_2\wedge y_4=0$. Now replacing $H^2$ by $H^2-c H^3 -c'H^4$ for suitable constants $c$, $c'$, we may assume that $x_2=y_2=0$. This will mean $H^2$ has only  the upper left corner, so after subtracting a multiple of $H^1$ from it, we will get a matrix with rank less than $3$. This completes the proof.
\end{proof}

\end{document}